\documentclass[reqno]{amsart}
\usepackage{amsmath, amsthm, amssymb, amscd}

\usepackage[a4paper,width=150mm,top=25mm,bottom=25mm,centering]{geometry}

\usepackage{enumitem}
\usepackage{graphicx} 
\usepackage{xcolor}
\usepackage{comment}
\usepackage{cases}
\usepackage{array}
\usepackage{multirow}

\usepackage{tikz}
\usepackage{pgf}
\usepackage{tikzscale}
\usepackage{tikz-cd}
\usetikzlibrary{decorations.pathreplacing,decorations.markings}
\usetikzlibrary{arrows, knots, external, positioning}

\makeatletter
\tikzset{column sep/.code=\def\pgfmatrixcolumnsep{\pgf@matrix@xscale*(#1)},
  row sep/.code   =\def\pgfmatrixrowsep{\pgf@matrix@yscale*(#1)},
  matrix xscale/.code=%
    \pgfmathsetmacro\pgf@matrix@xscale{\pgf@matrix@xscale*(#1)},
  matrix yscale/.code=%
    \pgfmathsetmacro\pgf@matrix@yscale{\pgf@matrix@yscale*(#1)},
  matrix scale/.style={/tikz/matrix xscale={#1},/tikz/matrix yscale={#1}}}
\def\pgf@matrix@xscale{1}
\def\pgf@matrix@yscale{1}
\makeatother

\usepackage{url}
\usepackage{caption} 

\newtheorem{theorem}{Theorem}[section]
\newtheorem{conjecture}[theorem]{Conjecture}
\newtheorem{lemma}[theorem]{Lemma}
\newtheorem{proposition}[theorem]{Proposition}

\newtheorem{claim}[theorem]{Claim}

\theoremstyle{definition}

\newtheorem{remark}[theorem]{Remark}
\newtheorem{example}[theorem]{Example}

\renewcommand{\Bbb}{\mathbb}

\numberwithin{equation}{section}

\usepackage[bookmarksnumbered=true, colorlinks=true]{hyperref}
\AtBeginDocument{
    \hypersetup{
        citecolor=blue,
        linkcolor=blue,
        urlcolor=blue
    }
}

\begin{document}

\title[The algebraic Montgomery--Yang problem]{The algebraic Montgomery--Yang problem}

\author{Woohyeok Jo}
\address{Department of Mathematical Sciences, Seoul National University, Seoul 08826,  Republic of Korea}
\email{koko1681@snu.ac.kr}

\author{Jongil Park}
\address{Department of Mathematical Sciences and Research Institute of Mathematics, Seoul National University, Seoul 08826, Republic of Korea}
\email{jipark@snu.ac.kr}

\author{Kyungbae Park}
\address{Department of Mathematics, Kangwon National University, Gangwon 24341,  Republic of Korea}
\email{kyungbaepark@kangwon.ac.kr}

\thanks{}
\subjclass[2020]{14J17, 14J26, 57K41, 57K30}

\keywords{Algebraic Montgomery--Yang problem, Donaldson's diagonalization theorem, orbifold Bogomolov--Miyaoka--Yau inequality, rational homology projective planes, $\operatorname{spin}^c$ structures}
\date{July 7, 2026}

\begin{abstract}
We completely resolve the algebraic Montgomery--Yang problem, a conjecture of Koll\'ar stating that every rational homology projective plane with quotient singularities and a simply-connected smooth locus has at most three singular points. The crux of our proof is a new lattice-theoretic constraint, obtained by combining Donaldson's diagonalization theorem with the distinguished $\operatorname{spin}^c$ structure on the smooth locus whose determinant line bundle is the canonical bundle. Together with the orbifold Bogomolov--Miyaoka--Yau inequality, this constraint rules out all remaining cases in the problem and completes the proof.
\end{abstract}

\maketitle

\section{Introduction}
A normal projective complex surface whose Betti numbers agree with those of the complex projective plane $\mathbb{CP}^2$ is called a \emph{rational homology projective plane}, or simply a \emph{$\mathbb{Q}$-homology $\mathbb{CP}^2$}. Such surfaces have the smallest possible Betti numbers among algebraic surfaces and thus occupy a prominent position in the theory of algebraic surfaces; see \cite{Keum-2018} for a survey. At the same time, as observed by Koll\'ar \cite{Kollar-2008}, these surfaces are deeply connected to questions in low-dimensional topology. For instance, they arise in the study of small fillings of spherical $3$-manifolds and in problems concerning embeddings of rational homology balls into $\mathbb{CP}^2$. For related recent works, see \cite{Jo-Park-Park-2024-lens, Golla-Owens}.

A non-singular $\mathbb{Q}$-homology $\mathbb{CP}^2$ that is not biholomorphic to $\mathbb{CP}^2$ is called a \emph{fake projective plane}. These surfaces have been completely classified \cite{Prasad-Yeung-2007, Prasad-Yeung-2010, Cartwright-Steger-2010}. In the singular setting, one usually restricts attention to surfaces with only quotient singularities. One of the fundamental questions concerning such surfaces is the classification problem in terms of singularity type, namely, determining which collections of quotient singularities can occur on a $\mathbb{Q}$-homology $\mathbb{CP}^2$ \cite{Brenton-1977, Brenton-Drucker-Prins-1981, Brenton-Drucker-Prins-1981-2, Bindschadler-Brenton-1984, Furushima-1986, Miyanishi-Zhang-1988, Zhang-1989, Kojima-1999, Ye-2002, Kojima-2003, Alexeev-Nikulin-2006, Hwang-Keum-Ohashi-2015, Keum-2015, Schutt-2023, Lacini-2024, Jo-Park-Park-2024-02, Belousov-Hwang-2025, Palka-Pelka-2025}. Closely related to this is the question of determining the maximal number of quotient singularities that a $\mathbb{Q}$-homology $\mathbb{CP}^2$ can support \cite{Miyaoka-1984, KeM-1999, Belousov-2008, Belousov-2009, Hwang-Keum-2011-2}. At the heart of both questions lies the so-called algebraic Montgomery--Yang problem, introduced by Koll\'ar \cite{Kollar-2008} as an algebraic analog of the Montgomery--Yang problem for pseudo-free smooth circle actions on the $5$-sphere. The latter problem, originally asked by Montgomery and Yang \cite{Montgomery-Yang-1972} and later formulated by Fintushel and Stern \cite{Fintushel-Stern-1987}, remains widely open; see also \cite[Problem 5.7]{K3-2026}. Hence the algebraic Montgomery--Yang problem belongs to a broader circle of questions at the interface of algebraic geometry and low-dimensional topology. In particular, it is closely related to Koll\'ar's proposed smooth Bogomolov--Miyaoka--Yau (BMY) inequality for differentiable orbifolds \cite{Kollar-2008}, which connects the Montgomery--Yang problem with $H$-cobordism questions for Seifert fibered $3$-manifolds and with the study of algebraic surfaces with quotient singularities. A concrete instance of this connection is the three fibers conjecture, first indicated by Fintushel and Stern and explicitly stated by Lawson \cite{Lawson-1988}; see also \cite[Conjecture 20]{Kollar-2008}. It asserts that a Seifert fibered homology $3$-sphere with more than three exceptional fibers cannot bound a homology $4$-ball.

\begin{conjecture}[Algebraic Montgomery--Yang problem, {\cite[Conjecture 30]{Kollar-2008}}]\label{conj:aMY} Let $S$ be a rational homology projective plane with quotient singularities. If its smooth locus $S^0:= S\setminus\textup{Sing}(S)$ is simply-connected, then $S$ has at most three singular points. 
\end{conjecture}

The main goal of this paper is to prove this conjecture, thereby settling the algebraic Montgomery--Yang problem. It is already known, essentially by the weak version $e_{\textup{orb}}\geq 0$ of the orbifold BMY inequality, that a $\mathbb{Q}$-homology $\mathbb{CP}^2$ with quotient singularities and a simply-connected smooth locus can have at most four singular points. Moreover, Conjecture \ref{conj:aMY} has been verified when $S$ has at least one non-cyclic quotient singularity, is non-rational, or has an anti-ample canonical divisor \cite{Hwang-Keum-2011-1, Hwang-Keum-2013, Hwang-Keum-2014}. Consequently, Conjecture \ref{conj:aMY} reduces to the case where $S$ is rational, has an ample canonical divisor, and admits only cyclic quotient singularities. In our previous work \cite{Jo-Park-Park-2025-01}, we used several gauge-theoretic and smooth $4$-manifold invariants, in a direction quite different from earlier techniques, to narrow down the remaining cases. In this paper, we introduce a new lattice-theoretic constraint, obtained by combining Donaldson's diagonalization theorem with the distinguished $\operatorname{spin}^c$ structure on the smooth locus whose first Chern class is the canonical class. Together with the preceding reductions, this constraint completes the proof of the algebraic Montgomery--Yang problem.

\begin{theorem}[Algebraic Montgomery--Yang problem]\label{main:aMY}
    Let $S$ be a rational homology projective plane with quotient singularities. If its smooth locus $S^0$ is simply-connected, then $S$ has at most three singular points. 
\end{theorem}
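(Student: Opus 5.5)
By the reductions recalled in the introduction, we may assume that $S$ is rational, that $K_S$ is ample, that $S$ has exactly four singular points, and that all four are cyclic quotient singularities $\frac{1}{n_i}(1,q_i)$, $i=1,\dots,4$. We also assume that the remaining finite list of quadruples $(n_1,\dots,n_4)$ and orbifold data left open in \cite{Jo-Park-Park-2025-01} is available. Simple-connectedness of $S^0$ together with $b_2(S)=1$ forces the $n_i$ to be pairwise coprime, and $H_1(\partial)$ of the union of the singularity links is $\bigoplus \mathbb{Z}/n_i$. Let $X$ be the complement of regular neighbourhoods of the singular points. Then $X$ is a rational homology $\mathbb{CP}^2$ minus four balls-with-lens-space boundary, with $H_2(X)$ of rank one and intersection form $(d)$, where $d=\pm\prod n_i$ up to a square. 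Here $d$ is related to $K_S^2$ and the singularity data.

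The plan is to glue to $X$ (orientation reversed if necessary) the canonical negative-definite plumbings of the lens spaces, i.e.\ the linear chains of the Hirzebruch--Jung continued fractions of $n_i/(n_i-q_i)$, which are the dual resolutions. The choice of orientation is made so that the resulting closed smooth $4$-manifold $Z$ is negative definite. This is the standard setting in which $-\overline{S}$ with the dual plumbings is definite, because $b_2^+(S)=1$ is absorbed. Donaldson's theorem then gives a lattice embedding of $\Lambda=\langle d'\rangle\oplus\bigoplus_i \Lambda(P_i')$ into the standard diagonal lattice $-\mathbb{Z}^{N}$, with $N=\operatorname{rank}\Lambda$. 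This much was presumably already exploited in earlier work. The new ingredient is the distinguished $\operatorname{spin}^c$ structure $\mathfrak{s}$ on $S^0$ with $c_1(\mathfrak{s})=-K_{S^0}$. Restricted to each lens space, $\mathfrak{s}$ is the canonical $\operatorname{spin}^c$ structure, and it extends over the dual plumbing $P_i'$ with $c_1$ equal to the characteristic vector whose pairing with each sphere $E$ is $-E^2-2$, up to sign conventions. Gluing gives a $\operatorname{spin}^c$ structure on $Z$ whose first Chern class is characteristic.

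Under the Donaldson embedding, this class must therefore correspond to a vector in $\mathbb{Z}^N$ with all coordinates odd. In addition, $c_1^2+N$ is computable: it is $K_S^2$ plus local correction terms from the singularities, via the orbifold Noether-type formula $K_S^2 + \sum \mu_i = \dots$. This yields two concrete constraints. First, the image of $c_1$ has all coordinates odd. Second, $\sum_j x_j^2 = -c_1^2 \ge N$, with equality precisely when all $x_j=\pm1$. I will express $c_1^2$ explicitly as $K_S^2 - \sum_i (\text{local term for } P_i')$. Combined with the orbifold BMY inequality $K_S^2\le 3e_{\mathrm{orb}}(S^0)=3\bigl(-1+\sum 1/n_i\bigr)$, this should force $-c_1^2$ to be very close to $N$, so that almost all coordinates are $\pm1$. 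Such a rigid vector, orthogonal to or pairing prescribed ways with the embedded chains, is then incompatible with the embedding of the four chains together with the class of square $d'$.

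The main obstacle is the final case analysis. One has to show, for the residual families of $(n_i,q_i)$ (infinitely many a priori, parametrised by continued-fraction data), that no embedding admits such a characteristic vector. I would first try to prove a general lemma: if a linear chain embeds in $-\mathbb{Z}^N$ and the canonical characteristic vector restricts with $\sum x_j^2$ minimal, then the embedding is forced to be the standard one arising from a blow-up description. That forced embedding would then have too few coordinates left to accommodate the rank-one summand $\langle d'\rangle$ with the required pairings, given that the $n_i$ are pairwise coprime and that there are four of them. I expect the BMY bound to provide exactly the slack needed to make this counting inequality strict.
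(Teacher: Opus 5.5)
Your architecture matches the paper's: glue $-X_S$ to the dual plumbings $X(p_i,p_i-q_i)$, apply Donaldson, and constrain the image of the $\operatorname{spin}^c$ class coming from $K_S$. However, the $\operatorname{spin}^c$ step is wrong as you state it.

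On $L(p,q)$, the structure $\mathfrak{s}_{K_{X_S}}$ restricts to $\mathfrak{s}_{p,q}(p-1)$. Under $f[z_1,z_2]=[\bar z_1,z_2]$ this corresponds to $\mathfrak{s}_{p,p-q}(0)$ on $L(p,p-q)$. That is \emph{not} the structure $\mathfrak{s}_{p,p-q}(p-1)$ cut out by the adjunction class $K_0$ of the dual plumbing, the one pairing $-E^2-2$ with every sphere. The correct extension is $K_{p,p-q}=K_0+2v_s^*$, which pairs $-v_s^2$ with the right-end sphere.

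You can check that your choice cannot glue. Using $K_0$ gives
\[
c_1^2=-\sum_i b_2(X(p_i,p_i-q_i))-9+4\sum_i\left(1-\frac{1}{p_i}\right),
\]
which is non-integral in the $(2,3,5,n)$ case, and a sign change does not help. Establishing the correct matching is a genuine input. It needs three things: the position of $\overline{\mathfrak{s}}_{\mathrm{can}}|_{L(p,q)}$ in the Ozsv\'ath--Szab\'o labelling (Ue's result), the behaviour of that labelling under $f$, and, for even $p$, a way to distinguish the two spin structures (the $\sigma$-condition, which holds for $\tfrac12(1,1)$).

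With the right class, $K_S^2$ cancels exactly against the Dedekind sums, so $-c_1^2=N+8$ identically. Hence $\mathbf{K}$ has exactly one coordinate $\pm3$ and all others $\pm1$, with no input from BMY. The equality case you anticipate never occurs, and BMY does not enter where you place it.

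The final step, which carries the whole difficulty, is only a hope. The residual case is not a finite list but the infinite family $\tfrac12(1,1),\tfrac13(1,1),\tfrac15(1,q_3),\tfrac1n(1,q_4)$. Your proposed lemma forcing a ``standard blow-up embedding'' is neither stated precisely nor proved. The paper's mechanism is quite different:
\begin{enumerate}
\item The three small chains admit only finitely many embeddings, and these fix the first few coordinates of $\mathbf{K}$.
\item BMY, combined with ampleness of $K_S$ and Lisca's identity, pins $\sum_j(a_j-3)$ for the unknown chain to two values. This is the actual role of BMY.
\item Through $\mathbf{T}^2=-(\mathbb{I}_{\operatorname{total}}+N+7)$ and $\mathbf{T}^2+\mathbf{T}\cdot\mathbf{K}=0$, that forces the sum $\mathbf{T}$ of all basis images into a few normal forms.
\item The partial sums $\mathbf{P}_k$ of the long chain satisfy $\mathbf{P}_k^2+\mathbf{P}_k\cdot\mathbf{K}=-2$ and $\mathbf{P}_k\cdot(\mathbf{S}_4+\mathbf{K})=-1$. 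These give either an incompatibility with the normal forms of $\mathbf{T}$, a rank contradiction, or an orthogonal complement spanned by a vector of square $-2$, $-3$ or $-30$ instead of $-30n$.
\end{enumerate}
That last obstruction requires $Q_{X_S}\cong\langle p_1\cdots p_n\rangle$ exactly, which follows from $H_1(S^0;\mathbb{Z})=0$. You only record $d$ ``up to a square'' and tie it to $K_S^2$; in fact it is $p_1\cdots p_n K_S^2$ that is a square.
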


Note that the bound in Theorem \ref{main:aMY} is sharp: there are many rational homology projective planes with a simply-connected smooth locus and three quotient singularities, such as weighted projective planes; see Example \ref{ex:weighted_projective_planes}. Thus, the theorem gives an optimal upper bound on the number of singularities of such surfaces.

We first recall the reduction from \cite{Jo-Park-Park-2025-01} and indicate where the new ingredient of the present paper enters. Let $S$ be a $\mathbb{Q}$-homology $\mathbb{CP}^2$ as described in Conjecture \ref{conj:aMY}, and assume that $S$ has precisely four cyclic quotient singularities of types\footnote{The convention for the notation of cyclic quotient singularities in \cite{Jo-Park-Park-2025-01} differs from the one adopted in this paper. Specifically, a cyclic quotient singularity of type $\tfrac{1}{p}(1, q)$ in our notation is referred to as type $(p, p-q)$ in \cite{Jo-Park-Park-2025-01}.} \begin{equation}\label{eq:four_cyclic_singularities}
    \tfrac{1}{p_1}(1,q_1), \quad 
    \tfrac{1}{p_2}(1,q_2), \quad 
    \tfrac{1}{p_3}(1,q_3), \quad 
    \tfrac{1}{p_4}(1,q_4), \qquad 
    p_1 <  p_2 < p_3 < p_4.
\end{equation}
The weak version of the orbifold BMY inequality implies that the orbifold Euler characteristic $e_{\textup{orb}}(S)$ must be nonnegative, and the condition $H_1(S^0;\mathbb{Z})=0$ implies that the orders $p_1,p_2,p_3,p_4$ are pairwise relatively prime. Combining these conditions leaves exactly the following three possibilities: \begin{equation}
\label{eq:three_cases}
(p_1,p_2,p_3,p_4)=\begin{cases}
    (2,3,5,n), & \gcd(n,30)=1,  \\
    (2,3,7,n), & n\in \{11, 13, 17, 19, 23, 25, 29, 31, 37, 41\}, \\
    (2,3,11,13). 
\end{cases}
\end{equation}
Observe that the $(2,3,5,n)$ case is infinite, while the $(2,3,7,n)$ and $(2,3,11,13)$ cases are finite.

Each singular point of $S$ has a neighborhood homeomorphic to the cone over its link, which is a lens space. Excising these cone neighborhoods from $S$ yields a compact, smooth, oriented $4$-manifold $X_S$ with $b_2 = b_2^+ = 1$, whose boundary is the disjoint union of four lens spaces. In \cite{Jo-Park-Park-2025-01}, we applied various tools from the theory of topological and smooth $4$-manifolds—such as the linking form, Donaldson's diagonalization theorem, and Heegaard Floer $d$-invariants—to prove the non-existence of such a $4$-manifold $X_S$. By doing so, we eliminated the $(2,3,7,n)$ and $(2,3,11,13)$ cases \cite[Theorem 1.5]{Jo-Park-Park-2025-01}, as well as the $(2,3,5,n)$ case for $n<2599$ \cite[Theorem 1.6]{Jo-Park-Park-2025-01}.

In the remaining $(2,3,5,n)$ case, the subcase in which the singularity of order $3$ is of type $\tfrac{1}{3}(1,2)$ has already been ruled out by \cite[Lemma 5.3]{Hwang-Keum-2013}. Consequently, the cyclic quotient singularities of such a surface must be of the types
\begin{equation}\label{eq:remaining_cases}
    \tfrac{1}{2}(1,1), \quad \tfrac{1}{3}(1,1), \quad \tfrac{1}{5}(1,q_3), \quad \tfrac{1}{n}(1,q_4),
\end{equation}
for some positive integer $n$ with $\gcd(n, 30)=1$ and some integers $q_3$ and $q_4$ satisfying $0<q_3<5$, $0<q_4<n$, $\gcd(q_3,5)=1$, and $\gcd(q_4,n)=1$. Thus, it remains only to show that no such surface with cyclic quotient singularities of the types listed in \eqref{eq:remaining_cases} can exist; this is precisely the objective of Theorem \ref{thm:aMY} below. 

While the tools in \cite{Jo-Park-Park-2025-01} were quite powerful, they were not sufficient to rule out all remaining cases. In fact, through computer-aided calculations, we applied these obstructions to all singularity types \eqref{eq:remaining_cases} with $n < 50{,}000$, leaving exactly $16$ cases unresolved \cite[Table 3]{Jo-Park-Park-2025-01}; note that $8$ of these were recently ruled out in \cite[Corollary 8.2]{Liu-Liu-2026}. Moreover, we found an infinite family that is not obstructed by any of the conditions considered there \cite[Proposition 4.1]{Jo-Park-Park-2025-01}. The limitation of the previous approach was that it treated $X_S$ merely as a smooth $4$-manifold. In the present paper, we make essential use of the fact that $X_S$ arises from a complex surface: the canonical bundle on the smooth locus determines a distinguished $\operatorname{spin}^c$ structure, and the key new idea is to refine the Donaldson obstruction by imposing compatibility with this $\operatorname{spin}^c$ structure. This leads to the following lattice-theoretic constraint, which is the main technical ingredient in the proof of Theorem \ref{thm:aMY}.

\begin{theorem}\label{thm:main}
    Let $S$ be a rational homology projective plane with $H_1(S^0;\mathbb{Z})=0$ and with $n$ cyclic quotient singularities of types 
\[
    \tfrac{1}{p_1}(1, q_1),\dots,\tfrac{1}{p_n}(1, q_n).
\]
Assume that, for each $i=1,\dots,n$, either $p_i$ is odd or $\sigma(q_i, p_i, +1)\neq \sigma (q_i, p_i, -1)$. Then there exists a codimension-one lattice embedding \begin{equation}\label{eq:codim_one_embedding}
    \iota\colon \bigoplus_{i=1}^n Q_{X(p_i, p_i-q_i)} \longrightarrow -\mathbb{Z}^N=\langle -1\rangle^N,
\end{equation}
where $N=\sum_{i=1}^n b_2(X(p_i,p_i-q_i))+1$, satisfying the following conditions: \begin{enumerate}[label=\textup{(\alph*)}]
    \item \label{item(a)_main} There exists a vector $\mathbf{K}\in -\mathbb{Z}^N$ of the form \[
    \mathbf{K}=\epsilon_1e_1 +\cdots+ \epsilon_{i_0-1}e_{i_0-1}+3\epsilon_{i_0}e_{i_0}+\epsilon_{i_0+1}e_{i_0+1}+\cdots +\epsilon_Ne_N \quad (1\leq i_0\leq N, \, \epsilon_i\in \{\pm 1\})
    \]
    such that for any standard basis vector $v\in \bigoplus_{i=1}^n Q_{X(p_i,p_i-q_i)}$, \[
    \mathbf{K}\cdot \iota(v)=\begin{cases}
        -v^2 & \text{if $v$ is right-end}, \\
        -v^2-2 & \text{otherwise}.
    \end{cases}
    \]
    \item \label{item(b)_main} The orthogonal complement of the image of the embedding $\iota$ is generated by a vector in $-\mathbb{Z}^N$ of square $-p_1\cdots p_n$.   
\end{enumerate} 
\end{theorem}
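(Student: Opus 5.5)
The plan is to build a closed negative definite smooth $4$-manifold out of $S$ and then read off the lattice data. Let $X_S$ be the complement in $S$ of open cone neighborhoods of the $n$ singular points. Then $X_S$ is a compact, smooth, oriented $4$-manifold with $b_2=b_2^+=1$, and its boundary components are the links $L(p_i,q_i)$. I will work with $-X_S$, which has $b_2=b_2^-=1$. Its boundary component $L(p_i,q_i)$, with its new orientation, is identified with $\partial X(p_i,p_i-q_i)$. Here $X(p_i,p_i-q_i)$ is the negative definite linear plumbing whose lattice is $Q_{X(p_i,p_i-q_i)}$ and whose determinant is $p_i$. Capping each boundary component with the corresponding plumbing gives a closed, smooth, negative definite $4$-manifold
\[
Z=-X_S\cup\bigcup_{i}X(p_i,p_i-q_i),\qquad b_2(Z)=N .
\]

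First I settle the topology. From $H_1(S^0;\mathbb{Z})=0$ I get $H_1(X_S)=0$. Since each plumbing is simply connected, a Mayer--Vietoris argument gives $H_1(Z)=0$, so $H_2(Z)$ is torsion free and $Q_Z$ is unimodular. Donaldson's diagonalization theorem then gives $Q_Z\cong -\mathbb{Z}^N$. Inclusion of the plumbings yields the codimension-one embedding $\iota$.

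For (b), I will show that $\iota$ has primitive image. The cokernel of $\bigoplus_i H_2(X(p_i,p_i-q_i))\to H_2(Z)$ injects into $H_2(-X_S,\partial)$ modulo torsion, and the absence of torsion here again comes from $H_1(X_S)=0$. Given primitivity, the orthogonal complement in a unimodular lattice has the same discriminant as the image, namely $\prod_i \det Q_{X(p_i,p_i-q_i)}=\prod_i p_i$. Since the complement has rank one and is negative definite, it is generated by a vector of square $-p_1\cdots p_n$.

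The heart of the argument is (a). The complex structure on $S^0$ gives the canonical $\operatorname{spin}^c$ structure $\mathfrak{s}_K$ on $X_S$, with $c_1=\pm K$; orientation reversal does not change the set of characteristic classes. I want to extend $\mathfrak{s}_K$ over each plumbing by a $\operatorname{spin}^c$ structure whose first Chern class $k_i$ satisfies
\[
k_i(v)=-v^2-2 \text{ at interior vertices}, \qquad k_i(v)=-v^2 \text{ at the right-end vertex}.
\]
These values define a characteristic covector on $Q_{X(p_i,p_i-q_i)}$, and it restricts to some $\operatorname{spin}^c$ structure on $L(p_i,q_i)$. I must check that this restriction coincides with the restriction of $\mathfrak{s}_K$, that is, with the canonical $\operatorname{spin}^c$ structure of the singularity link coming from the dual resolution chain.

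This matching is where I expect the hypothesis ``$p_i$ odd or $\sigma(q_i,p_i,+1)\neq\sigma(q_i,p_i,-1)$'' to be used. If $p_i$ is odd, a $\operatorname{spin}^c$ structure on $L(p_i,q_i)$ is determined by $c_1$, since $H^2(L(p_i,q_i))$ has no $2$-torsion, so it suffices to compare first Chern classes in $\mathbb{Z}/p_i$ via the linking form. If $p_i$ is even, two $\operatorname{spin}^c$ structures share each $c_1$, and I would distinguish them using the invariant $\sigma(q_i,p_i,\pm1)$ (a Dedekind-sum type signature/$d$-invariant quantity). The matching must then be chosen so that it is forced, and the hypothesis guarantees this.

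Once the structures glue to a $\operatorname{spin}^c$ structure on $Z$, I set $\mathbf{K}=c_1(Z)\in-\mathbb{Z}^N$. The evaluation formula in (a) then holds by construction. Because $\mathbf{K}$ is characteristic, all of its coordinates are odd. It remains to compute $\mathbf{K}^2$ by splitting it rationally into the part over $-X_S$, which contributes $-K_S^2$, and the plumbing contributions $k_i^2$. I will combine this with Noether's formula for the minimal resolution of $S$ (noting $b_2(S)=1$, and $S$ is rational in the cases of interest) and the standard identities relating $K$ of the resolution chain to that of the dual chain. The aim is the identity $\mathbf{K}^2=-(N+8)$.

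Given that identity, a sum of $N$ odd squares equal to $N+8$ forces exactly one coordinate to be $\pm3$ and the others $\pm1$, which is the form required in (a). I expect this square computation, together with the even-$p_i$ $\operatorname{spin}^c$ matching, to be the main obstacle, since it requires careful bookkeeping of the Dedekind-sum terms between the resolution chain and its dual.
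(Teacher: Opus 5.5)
Your overall architecture is the paper's. Your $Z$ is the paper's $W_S$, your $k_i$ is the class $K_{p_i,p_i-q_i}$, and the endgame (Donaldson, $\mathbf{K}^2=-(N+8)$, a sum of $N$ odd squares equal to $N+8$) is the same. Your argument for (b) is correct and somewhat more self-contained than the paper's: the image is primitive because the cokernel injects into $H_2(X_S,\partial X_S)\cong H^2(X_S)$, which is torsion-free since $H_1(X_S)=0$, and the discriminants then agree. Your Noether-formula route to $K_S^2$ also works, and needs no rationality, since $q(\tilde S)=p_g(\tilde S)=0$ for any such $S$. The gap is the step you postpone: showing that $\mathfrak{s}_{K_{X_S}}$ and $\mathfrak{s}_{k_i}$ agree on each boundary lens space. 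Without it there is no global $\operatorname{spin}^c$ structure on $Z$, hence no integral characteristic $\mathbf{K}$ at all. This is the real content of the theorem, and the proposal gives no argument for it.

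Concretely, three things are missing.

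(1) You never fix the gluing diffeomorphism, yet the outcome depends on it. For the paper's $f[z_1,z_2]=[\bar z_1,z_2]$ one has $f^*\mathfrak{s}_{p,p-q}(i)=\mathfrak{s}_{p,q}(p-1-i)$. For $g[z_1,z_2]=[z_1,\bar z_2]$ one has $g^*\mathfrak{s}_{p,p-q}(i)=\mathfrak{s}_{p,q}(i+q)$, and with $g$ your structures generally do not glue.

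(2) For odd $p_i$ the reduction to first Chern classes is right, but the comparison still has to be carried out. You need $c_1(\mathfrak{s}_{K_{X_S}}|_{L(p,q)})=c_1(K_C|_{L(p,q)})=-(q+1)c_1(\mathcal{L}_{p,q})$. You need $k_i|_{L(p,p-q)}=(q+1)c_1(\mathcal{L}_{p,p-q})$, which uses Ue's identification of the boundary restrictions of $v_s^*$ and of $K_{p,p-q}-2v_s^*$. And you need $f^*\mathcal{L}_{p,p-q}\cong\mathcal{L}_{p,q}^{-1}$.

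(3) For even $p_i$, ``distinguish them using $\sigma$'' is not a mechanism. The hypothesis $\sigma(q,p,+1)\neq\sigma(q,p,-1)$ says the two spin structures of $L(p,q)$ have different $d$-invariants. That lets you determine how $f$ pairs the spin structures on the two sides. It does not tell you \emph{which} of the two $\operatorname{spin}^c$ structures with the given $c_1$ the complex structure induces on the link. That requires a separate identification of $\overline{\mathfrak{s}}_{\mathrm{can}}|_{L(p,q)}$ in the same labeling as $\mathfrak{s}_{K_{p,p-q}}|_{L(p,p-q)}$. The paper gets this from Ue's result $\overline{\mathfrak{s}}_{\mathrm{can}}|_{L(p,q)}=\mathfrak{s}_{p,q}(p-1)$, proved via Fukumoto--Furuta--Ue, together with $\mathfrak{s}_{K_{p,q}}|_{L(p,q)}=\mathfrak{s}_{p,q}(0)$. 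This is where the hypothesis is genuinely used.

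Note also that the structure on the link is governed by the minimal resolution chain of $p_i/q_i$, not by the dual chain as you wrote. Indeed, $\mathfrak{s}_{K_{X_S}}$ extends over the minimal resolution $\tilde S$ with $c_1=K_{\tilde S}$. On $X(p_i,q_i)$ this is the adjunction class $K\cdot E=-E^2-2$. This observation reduces the question to one about $X(p_i,q_i)$ versus $X(p_i,p_i-q_i)$ under a chosen gluing, but that question still has to be answered.
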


Here we briefly explain the notation used in the theorem above. Detailed discussions will be given in subsequent sections. \begin{itemize}
    \item  For relatively prime integers $p>q>0$, the function $\sigma$, introduced in \cite{Fukumoto-Furuta-Ue-2001}, is defined by \[
  \sigma(q,p,\epsilon)=\frac{1}{p}\sum_{k=1}^{p-1} \left(\cot \frac{\pi k}{p}\cot \frac{\pi k q}{p} +2\epsilon^k \csc \frac{\pi k }{p}\csc \frac{\pi k q}{p} \right) \quad (\epsilon \in \{\pm 1\}).
\]
For even $p$, we have $\sigma (q,p,+1)=\sigma (q,p,-1)$ precisely when the two spin structures of the lens space $L(p,q)$ have the same Heegaard Floer $d$-invariant (Remark \ref{rmk:sigma_and_d_invariants}). For $(p,q)=(2,1)$, we have $\sigma(1,2,+1)=1\neq -1=\sigma(1,2,-1)$, so Theorem \ref{thm:main} applies to the remaining cases \eqref{eq:remaining_cases} of the algebraic Montgomery--Yang problem.
\item Suppose that the Hirzebruch-Jung continued fraction of $p/q$ is given by \[ \frac{p}{q} = [a_1, \dots, a_s] := a_1 - \frac{1}{a_2 - \dfrac{1}{\cdots - \dfrac{1}{a_s}}} \quad (a_i \geq 2). \]
Then $X(p,q)$ denotes the negative definite plumbed $4$-manifold with $\partial X(p,q)=L(p,q)$, constructed according to the linear graph shown in the following figure. 
\begin{figure}[h]
\centering
\begin{tikzpicture}[scale=1.1]
\draw (-2,0) node[circle, fill, inner sep=1.2pt, black]{};
\draw (-1,0) node[circle, fill, inner sep=1.2pt, black]{};
\draw (1,0) node[circle, fill, inner sep=1.2pt, black]{};
\draw (2,0) node[circle, fill, inner sep=1.2pt, black]{};
\draw (-2,0) node[below]{$-a_1$};
\draw (-1,0) node[below]{$-a_2$};
\draw (1,0) node[below]{$-a_{s-1}$};
\draw (2,0) node[below]{$-a_s$};
\draw (0,0) node{$\cdots$};
\draw (-2,0)--(-1,0) (-1,0)--(-0.5,0) (0.5,0)--(1,0)  (1,0)--(2,0) ;
\end{tikzpicture}
\end{figure}

\noindent Its intersection lattice is denoted by $Q_{X(p,q)}$. The homology classes $v_1,\dots,v_s$ corresponding to the vertices with weights $-a_1,\dots, -a_s$ form a basis for $H_2(X(p,q);\mathbb{Z})$, called the \textit{standard basis} for $Q_{X(p,q)}$. In particular, we refer to $v_s$ as the \textit{right-end basis vector}.
\item For a nonzero integer $m\in \mathbb{Z}$, $\langle m\rangle$ denotes the lattice structure on $\mathbb{Z}$ where a generator $v$ satisfies $v^2=m$. For a positive integer $n$, $-\mathbb{Z}^n:=\langle -1\rangle^n$ denotes the standard diagonal negative definite lattice of rank $n$. We denote the standard basis for $-\mathbb{Z}^n$ by $\{e_1,\dots ,e_n\}$. These vectors satisfy $e_i\cdot e_j=-\delta_{ij}$, where $\delta_{ij}$ is the Kronecker delta. 
\end{itemize} 

We now outline the proof of Theorem \ref{thm:main}. As explained above, we remove cone neighborhoods of the singular points of $S$ to obtain a compact, smooth, oriented $4$-manifold $X_S$ with $b_2(X_S)=b_2^+(X_S)=1$, whose oriented boundary is given by $\partial X_S=\coprod_{i=1}^n -L(p_i,q_i)$, where $-L(p_i,q_i)$ denotes the orientation reversal of $L(p_i,q_i)$. The canonical divisor $K_S$ of $S$ restricts to an integral cohomology class $K_{X_S}\in H^2(X_S;\mathbb{Z})$, and there is a unique $\operatorname{spin}^c$ structure $\mathfrak{s}_{K_{X_S}}$ on $X_S$ whose first Chern class is $K_{X_S}$. 

Next, consider the disjoint union $X':=\coprod_{i=1}^n X(p_i,p_i-q_i)$ of plumbed $4$-manifolds. On each $X(p_i,p_i-q_i)$, we explicitly define a cohomology class $K_{p_i,p_i-q_i}\in H^2(X(p_i,p_i-q_i);\mathbb{Z})$ by specifying its evaluations on the standard basis vectors of $Q_{X(p_i,p_i-q_i)}$. The $\operatorname{spin}^c$ structures on $X(p_i,p_i-q_i)$ with first Chern class $K_{p_i,p_i-q_i}$ together define a $\operatorname{spin}^c$ structure $\mathfrak{s}'$ on $X'$.

Now, we take a specific orientation-reversing diffeomorphism $f$ between the boundaries of $-X_S$ (the orientation reversal of $X_S$) and $X'$: 
\[
    f\colon\partial(-X_S)=\coprod_{i=1}^n L(p_i,q_i) \longrightarrow \coprod_{i=1}^n L(p_i,p_i-q_i) =\partial X'.
\]
Through a detailed analysis of $\operatorname{spin}^c$ structures on lens spaces, we show that these $\operatorname{spin}^c$ structures match along the boundary, i.e., $f^*(\mathfrak{s}'|_{\partial X'})=\mathfrak{s}_{K_{X_S}}|_{\partial(-X_S)}$. This implies the existence of a global $\operatorname{spin}^c$ structure $\mathfrak{s}$, extending $\mathfrak{s}'$ and $\mathfrak{s}_{K_{X_S}}$, on the closed $4$-manifold $W_S:=X'\cup_f (-X_S)$ obtained by gluing along $f$. The closed $4$-manifold $W_S$ has a negative definite intersection form whose rank $b_2(W_S)$ is exactly the number $N$ given in the statement of the theorem. Donaldson's diagonalization theorem \cite{Donaldson-1987} thus implies that the intersection lattice $Q_{W_S}$ is isomorphic to the standard lattice $-\mathbb{Z}^N$. Under this isomorphism, we show that the class $c_1(\mathfrak{s})$, viewed as an element of $H_2(W_S;\mathbb{Z})$ via Poincar\'e duality, corresponds to a vector $\mathbf{K}$ satisfying condition \ref{item(a)_main}. Condition \ref{item(b)_main} follows from elementary considerations.

A notable feature of Theorem \ref{thm:main} is that it converts the existence problem for $S$ into a purely lattice-theoretic one. Applying this constraint to the remaining $(2,3,5,n)$ case listed in \eqref{eq:remaining_cases}, we prove that no compatible lattice embedding can exist. The only additional algebro-geometric input used in this final step is the orbifold BMY inequality (Theorem \ref{thm:oBMY}), which serves to simplify the combinatorial analysis. This gives the following final non-existence theorem, which, together with the preceding reductions, completes the proof of Theorem \ref{main:aMY}.

\begin{theorem}\label{thm:aMY}
    There does not exist a rational homology projective plane $S$ with $H_1(S^0;\mathbb{Z})=0$ whose canonical divisor is ample and that has exactly four cyclic quotient singularities of types 
    \[
        \tfrac{1}{2}(1,1), \quad \tfrac{1}{3}(1,1), \quad \tfrac{1}{5}(1,q_3), \quad \tfrac{1}{n}(1,q_4),
    \]
    where $n$ is a positive integer relatively prime to $30$.
\end{theorem}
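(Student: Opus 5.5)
Assume for contradiction that such an $S$ exists. Then I will apply Theorem \ref{thm:main} to it. This is allowed because the only even order is $p_1=2$, and for it $\sigma(1,2,+1)\neq\sigma(1,2,-1)$. The singularities $\tfrac12(1,1)$ and $\tfrac13(1,1)$ give plumbings $X(2,1)$ and $X(3,2)$. These are a single $(-2)$-sphere and a $(-2)$-$(-2)$ chain. The order-$5$ point gives one of the four short chains for $X(5,5-q_3)$. The order-$n$ point gives a chain $X(n,n-q_4)=[a_1,\dots,a_s]$.

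We then get an embedding $\iota$ into $-\mathbb{Z}^N$ with $N=1+2+b_2(X(5,5-q_3))+s+1$. By \ref{item(b)_main}, its orthogonal complement is spanned by a vector $w$ with $w^2=-30n$. By \ref{item(a)_main}, there is a characteristic vector $\mathbf{K}$ with all coordinates $\pm1$ except one coordinate $\pm3$, so $\mathbf{K}^2=-(N+8)$.

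The second step is to bring in the algebraic geometry. For a $\mathbb{Q}$-homology $\mathbb{CP}^2$, $K_S^2$ is determined by Noether's formula for quotient singularities. Equivalently, $\mathbf{K}^2$ computed on $W_S$ must equal $K_S^2$ plus the local contributions $K_{p,q}^2$ of the plumbings. I will check this identity as a consistency condition. The genuine input is the orbifold BMY inequality (Theorem \ref{thm:oBMY}), $K_S^2\le 3e_{\mathrm{orb}}(S)$, where
\[
e_{\mathrm{orb}}(S)=\tfrac1{2}+\tfrac13+\tfrac15+\tfrac1n-1>0 .
\]
Together with ampleness ($K_S^2>0$), this bounds the local invariant $\sum_i(a_i-2)$ of the length-$s$ chain, roughly via $K_S^2=\sum(\text{local terms})+\dots$. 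The goal is to show that only a few shapes of continued fraction for $n/(n-q_4)$ survive. In practice the bound forces the chain to be mostly $(-2)$'s with a bounded number of higher weights.

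The third step, which is the combinatorial core and the expected main obstacle, is to classify the embeddings of the four linear chains into $-\mathbb{Z}^N$ that are compatible with $\mathbf{K}$. Write the standard basis vectors as sums of $e_j$'s. Condition \ref{item(a)_main} says that $\mathbf{K}\cdot\iota(v)=-v^2-2$ for a non-right-end vector $v$. Since $\mathbf{K}$ has entries $\pm1$ away from one coordinate, a $(-2)$-vector $e_a\pm e_b$ must pair with $\mathbf{K}$ to $0$. This pins down relative signs: the $\epsilon$'s on $e_a,e_b$ must cancel. Each right-end $(-2)$-vector instead needs pairing $2$, which forces its coordinates to carry matching signs of $\mathbf{K}$, or forces it to hit the coordinate $e_{i_0}$ where $\mathbf{K}$ has the $3$.

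I will use these sign constraints as follows. The $\tfrac12(1,1)$ vector and the right-end vectors of the order-$3$ and order-$5$ chains each require a "positive" coordinate pattern. Only one coordinate carries the weight $3$. So a counting argument on the coordinates touched by right-end vectors, combined with the rank deficiency $N-\operatorname{rank}=1$, should force two right-end vectors to share coordinates in a way incompatible with orthogonality. Where this does not immediately contradict, I will use the BMY-restricted shapes from step two. I will then compute the complement vector $w$ explicitly and check $w^2\neq-30n$, or check that $\mathbf{K}$ fails the characteristic condition on $w$ (note $\mathbf{K}\cdot w\equiv w^2 \pmod 2$).

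I expect the hardest part to be the infinite family of \cite[Proposition 4.1]{Jo-Park-Park-2025-01}, which admits plain embeddings. There I would first try to show directly that every embedding of that family places the order-$2$ vector $e_a+e_b$ so that the pairing $\mathbf{K}\cdot\iota(v)=2$ cannot coexist with the zero pairings along the long $(-2)$-tails.
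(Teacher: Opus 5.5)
The first step matches the paper. Theorem~\ref{thm:main} applies because the only even order is $2$ and $\sigma(1,2,+1)\neq\sigma(1,2,-1)$. It gives the codimension-one embedding, the vector $\mathbf{K}$, and a complement generator of square $-30n$. The gap is in everything after that.

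\textbf{What BMY actually gives.} The orbifold BMY inequality does not cut the order-$n$ chain down to a few shapes. Combine it with the formula for $K_S^2$ and with the identity $\sum_i(a_i-3)+\sum_j(b_j-3)=-2$ for the dual expansions $n/(n-q_4)=[a_1,\dots,a_s]$ and $n/q_4=[b_1,\dots,b_\ell]$. The result is only that the single integer $\mathbb{I}_4=\sum_i(a_i-3)$ takes one of two values (Lemma~\ref{lem:6.3}). Since $a_i-3\ge -1$, this bounds neither $s$ nor $\sum_i(a_i-2)=\mathbb{I}_4+s$. Chains with arbitrarily many $3$'s have $\mathbb{I}=0$, so ``mostly $(-2)$'s with boundedly many higher weights'' is not what comes out. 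Infinitely many $(n,q_4)$ survive, so the fourth summand has to be handled uniformly in $s$, and your proposal has no device for that.

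\textbf{Why the local counting fails.} The counting in your third step cannot give a contradiction. A right-end $(-2)$-vector $e_a-e_b$ has $\mathbf{K}\cdot(e_a-e_b)=K_b-K_a$. This equals $2$ whenever $(K_a,K_b)=(-1,+1)$, so it need not touch the coordinate carrying the $\pm3$. For instance, in Example~\ref{ex:weighted_projective_planes} the right-end vector $-e_3-e_5$ pairs to $2$ without using the $3$. The first three summands admit many $\mathbf{K}$-compatible placements; the paper finds five patterns of $\mathbf{K}$ for $q_3=1$ alone. Your fallback check $\mathbf{K}\cdot w\equiv w^2 \pmod 2$ is vacuous, since every coordinate of $\mathbf{K}$ is odd.

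\textbf{The missing global mechanism.}
\begin{itemize}
\item Each chain sum $\mathbf{S}_i$ satisfies $\mathbf{S}_i^2+\mathbf{S}_i\cdot\mathbf{K}=0$.
\item Hence the total sum $\mathbf{T}$ of all basis images satisfies $\mathbf{T}^2+\mathbf{T}\cdot\mathbf{K}=0$ and $\mathbf{T}^2=-(\mathbb{I}_{\operatorname{total}}+N+7)$.
\item With the BMY range for $\mathbb{I}_{\operatorname{total}}$, this forces $\mathbf{T}$ to agree with $-\mathbf{K}$ outside at most three coordinates (Lemma~\ref{lem:6.4}).
\item The partial sums $\mathbf{P}_k$ of the long chain satisfy $\mathbf{P}_k^2+\mathbf{P}_k\cdot\mathbf{K}=-2$ and $\mathbf{P}_k\cdot(\mathbf{S}_4+\mathbf{K})=-1$. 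So they have only a handful of possible projections onto the coordinates used by the first three summands, independently of $s$.
\end{itemize}
Going case by case over the placements in Lemma~\ref{lem:6.1}, this yields either a rank contradiction or an orthogonal complement generated by a vector of square $-2$, $-3$ or $-30\neq -30n$. Your idea of computing $w$ and comparing $w^2$ with $-30n$ is one of the paper's endgames. It only becomes available after this uniform control of the long chain. Without it (or an equivalent mechanism), your third step and your plan for the infinite family are a hope rather than a proof.
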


Although Conjecture \ref{conj:aMY} and Theorem \ref{main:aMY} are stated under the assumption that the smooth locus is simply-connected, Theorem \ref{thm:aMY}, together with previously known results, shows that in the case of cyclic quotient singularities, the same conclusion holds under the weaker assumption $H_1(S^0;\mathbb{Z})=0$.

\begin{theorem}\label{cor:aMY}
    Let $S$ be a rational homology projective plane with only cyclic quotient singularities. If $H_1(S^0;\mathbb{Z})=0$, then $S$ has at most three singularities. 
\end{theorem}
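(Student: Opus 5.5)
This is a reduction to known results plus Theorem \ref{thm:aMY}. Let $S$ be a $\mathbb{Q}$-homology $\mathbb{CP}^2$ with only cyclic quotient singularities and $H_1(S^0;\mathbb{Z})=0$, and suppose for contradiction that $S$ has at least four singular points.

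\emph{Step 1: exactly four singularities with pairwise coprime orders.} We first re-run the standard bounds, checking that each uses only $H_1(S^0;\mathbb{Z})=0$ and never $\pi_1(S^0)=1$. The weak orbifold BMY inequality $e_{\textup{orb}}(S)\ge 0$ (Theorem \ref{thm:oBMY}) gives
\[
3-\sum_i\bigl(1-\tfrac1{p_i}\bigr)\ge 0 .
\]
Hence there are at most four singular points, and with four of them $\sum 1/p_i\ge 1$. The vanishing of $H_1(S^0;\mathbb{Z})$ forces the local orders to be pairwise coprime. The argument is the one recalled before \eqref{eq:three_cases}: a common prime factor of two orders would, through the linking form of the boundary lens spaces of $X_S$ together with $b_2(X_S)=1$, produce torsion in $H_1(X_S)$. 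So $S$ has exactly four singularities, with orders as in \eqref{eq:three_cases}.

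\emph{Step 2: dispose of the finite cases and the non-ample cases.} The cases $(2,3,7,n)$ and $(2,3,11,13)$ are excluded by \cite[Theorem 1.5]{Jo-Park-Park-2025-01}. That argument is purely topological, being a non-existence statement for the $4$-manifold $X_S$, and uses only homological data, so it applies under the hypothesis $H_1=0$. Next we need the non-rational and non-ample-$K_S$ cases of \cite{Hwang-Keum-2011-1, Hwang-Keum-2013, Hwang-Keum-2014}.
\begin{itemize}
\item If $K_S$ is numerically trivial or anti-ample, then $S$ is rational or log del Pezzo. There we invoke the Hwang--Keum bound, whose proof uses the orbifold BMY inequality, the lattice $H^2(S^0)$, and the coprimality of the orders, all of which need only $H_1(S^0)=0$.
\item If $S$ is non-rational, the Hwang--Keum argument likewise runs through the orbifold Euler characteristic and the Picard lattice of the minimal resolution.
\end{itemize}
The step I expect to be the main obstacle is verifying that these cited reductions genuinely need only $H_1(S^0;\mathbb{Z})=0$. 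Where a cited proof uses simple connectivity, my first attempt would be to check that it enters only through the abelianization, typically via coprimality or via unimodularity of the relevant lattice. The key instance is the exclusion of the type $\tfrac13(1,2)$ in \cite[Lemma 5.3]{Hwang-Keum-2013}, which I expect to be a numerical computation independent of $\pi_1$.

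\emph{Step 3: the remaining case.} After Steps 1 and 2, $K_S$ is ample and the singularities are as in \eqref{eq:remaining_cases}. Theorem \ref{thm:aMY} is stated under $H_1(S^0;\mathbb{Z})=0$, and it rules out this configuration. Therefore $S$ has at most three singular points.
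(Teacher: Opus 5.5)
Your proposal follows the paper's own route. The paper proves this result in one line: Theorem \ref{thm:aMY} is already stated under $H_1(S^0;\mathbb{Z})=0$, and the earlier reductions (coprimality plus $e_{\textup{orb}}\ge 0$, the Hwang--Keum non-rational and anti-ample cases, \cite[Theorem 1.5]{Jo-Park-Park-2025-01}, and \cite[Lemma 5.3]{Hwang-Keum-2013}) are taken to hold under that weaker hypothesis in the cyclic case, which is what you do.

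A few small slips need fixing:
\begin{itemize}
\item The inequality $e_{\textup{orb}}(S)\ge 0$ alone does not give at most four singular points, since five $A_1$ points satisfy it. You must invoke pairwise coprimality before drawing that conclusion.
\item Theorem \ref{thm:oBMY} as stated requires $K_S$ nef. Either dispose of the anti-ample case first or cite the log del Pezzo version of the weak inequality separately.
\item The case $K_S\equiv 0$ is excluded directly rather than by a Hwang--Keum bound. The index-one cover would be a nontrivial cyclic étale cover of $S^0$, contradicting $H_1(S^0;\mathbb{Z})=0$.
\end{itemize}
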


Note that the restriction to cyclic quotient singularities in Theorem \ref{cor:aMY} is necessary, as the statement fails if non-cyclic singularities are allowed \cite[Theorem 3]{Hwang-Keum-2011-1}.

\begin{remark}
Although our main focus is to apply Theorem \ref{thm:main} to rule out the remaining infinite $(2,3,5,n)$ case in \eqref{eq:three_cases}, the same theorem also immediately excludes the other two cases previously eliminated in \cite{Jo-Park-Park-2025-01}. More precisely, there are exactly $1008$ candidate combinations of singularities in the $(2,3,7,n)$ case and $84$ candidate combinations in the $(2,3,11,13)$ case (see \cite[Section 3.1]{Jo-Park-Park-2025-01}). Among these, the orbifold BMY inequality (Theorem \ref{thm:oBMY}) eliminates all candidates in the $(2,3,11,13)$ case and leaves only the following two combinations in the $(2,3,7,n)$ case:
\[
    \left\{\tfrac{1}{2}(1,1),\, \tfrac{1}{3}(1,1),\, \tfrac{1}{7}(1,5),\, \tfrac{1}{19}(1,16) \right\} \quad \text{and} \quad \left\{ \tfrac{1}{2}(1,1),\, \tfrac{1}{3}(1,2),\, \tfrac{1}{7}(1,1),\, \tfrac{1}{19}(1,17) \right\}.
\]
These two surviving candidates are immediately ruled out by Theorem \ref{thm:main}, since the corresponding codimension-one lattice embedding \eqref{eq:codim_one_embedding} into the standard negative definite lattice cannot exist.
\end{remark}

\begin{remark}
As mentioned above, the $(2,3,5,n)$ case with a singularity of type $\tfrac{1}{3}(1,2)$ has already been ruled out by a different argument. We expect, however, that it can also be excluded by the techniques developed in this paper. Consequently, in the setting of four cyclic quotient singularities, the non-existence of the surfaces considered in Theorem \ref{main:aMY} would be established uniformly, using only the orbifold BMY inequality and the lattice-theoretic constraint of Theorem \ref{thm:main}, without invoking the more involved tools used in \cite{Jo-Park-Park-2025-01}, such as the linking form condition and Heegaard Floer $d$-invariants.
\end{remark}

With the algebraic Montgomery--Yang problem now resolved, the next natural task is to classify the possible singularity types of rational homology projective planes with quotient singularities and a simply-connected smooth locus. For recent progress in this direction, we refer the reader to \cite{Jo-Park-Park-2024-02}. Since Theorem \ref{thm:main} provides an effective tool for obstructing the existence of surfaces with prescribed singularity types, we expect it to play an important role in advancing this classification problem.

\subsection*{Organization of the paper}
In Section \ref{sec:spinc_lattice}, we review basic facts about $\operatorname{spin}^c$ structures, establish our conventions regarding intersection lattices, and explain how the gluing of $\operatorname{spin}^c$ structures can be interpreted in terms of lattice embeddings. In Section \ref{sec:lens_spinc}, we provide a detailed analysis of $\operatorname{spin}^c$ structures on lens spaces and their behavior under certain orientation-reversing diffeomorphisms. The results established in this section will play a crucial role in the proof of Theorem \ref{thm:main}. Section \ref{sec:orbifold_BMY} summarizes the topological properties of rational homology projective planes and derives key properties that will be utilized in the proof of Theorem \ref{thm:main}. We also recall the orbifold BMY inequality (Theorem \ref{thm:oBMY}), which will be applied in the proof of Theorem \ref{thm:aMY}. In Section \ref{sec:main_thm_proof}, we prove the main lattice-theoretic constraint, Theorem \ref{thm:main}, using the results of Sections \ref{sec:spinc_lattice}, \ref{sec:lens_spinc}, and \ref{sec:orbifold_BMY}. Finally, in Section \ref{sec:main_result_proof}, we apply this constraint to prove Theorem \ref{thm:aMY}, thereby completing the resolution of the algebraic Montgomery--Yang problem. 

\subsection*{Acknowledgments} The authors are deeply grateful to Masaaki Ue for kindly explaining the details of the proof of Proposition \ref{prop:canonical_spin^c_structure}. They also express their gratitude to J\'anos Koll\'ar for his invaluable comments and suggestions, especially on the Introduction. Jongil Park was supported by the National Research Foundation of Korea (NRF) grants funded by the Korean government (No.2020R1A5A1016126 and RS-2024-00392067). He is also affiliated with the Research Institute of Mathematics at SNU. Kyungbae Park was supported by NRF grants funded by the Korean government (RS-2025-24523511 and RS-2025-25415913).

\section{Spin$^c$ structures and lattice embeddings}\label{sec:spinc_lattice}

In this section, we recall basic facts about $\operatorname{spin}^c$ structures on $3$- and $4$-manifolds. We also explain how the gluing of $\operatorname{spin}^c$ structures on two compact $4$-manifolds with boundary is reflected in the corresponding intersection forms.

\subsection{Spin$^c$ structures on $3$- and $4$-manifolds}\label{subsec:spin^c_structures} 
Let $X$ be a smooth, compact, oriented $4$-manifold, possibly with boundary. The set $\operatorname{Spin}^c(X)$ of isomorphism classes of $\operatorname{spin}^c$ structures on $X$ is nonempty, and it admits a free, transitive action of $H^2(X;\mathbb{Z})$ on $\operatorname{Spin}^c(X)$ which we denote by \[
(\mathfrak{s},\ell)\longmapsto \mathfrak{s}+\ell
\]
for $\mathfrak{s}\in \operatorname{Spin}^c(X)$ and $\ell \in H^2(X;\mathbb{Z})$. The $H^2(X;\mathbb{Z})$-action endows the set $\operatorname{Spin}^c(X)$ with the structure of an $H^2(X; \mathbb{Z})$-torsor. Consequently, the ``difference'' of two $\operatorname{spin}^c$ structures on $X$ is a well-defined element in $H^2(X; \mathbb{Z})$, even though there is no canonical identification between $\operatorname{Spin}^c(X)$ and $H^2(X; \mathbb{Z})$. Analogous statements hold for any smooth, oriented $3$-manifold $Y$.

There is a natural involution on $\operatorname{Spin}^c(X)$, called \textit{conjugation} and denoted by $\mathfrak{s}\mapsto \overline{\mathfrak{s}}$. In terms of the $H^2(X;\mathbb Z)$-torsor structure, it is given by $\overline{\mathfrak{s}}=\mathfrak{s}-c_1(\mathfrak{s})$, where $c_1(\mathfrak{s})$ is the first Chern class of the determinant line bundle $\det(\mathfrak{s})$ of $\mathfrak{s}$. In particular, we have $\det(\overline{\mathfrak{s}})\cong \det(\mathfrak{s})^{-1}$. The same construction applies to smooth, oriented $3$-manifolds.

A cohomology class $\alpha\in H^2(X;\mathbb{Z})$ is called a \emph{characteristic covector} if it satisfies \[
\langle \alpha ,v\rangle \equiv Q_X(v,v)\pmod{2}
\]
for every $v\in H_2(X;\mathbb{Z})$, where $Q_X\colon H_2(X;\mathbb{Z})\times H_2(X;\mathbb{Z})\to \mathbb{Z}$ denotes the intersection form of $X$. Any integral lift of the second Stiefel-Whitney class $w_2(TX)\in H^2(X;\mathbb{Z}_2)$ is a characteristic covector, and the converse holds provided that $H_1(X;\mathbb{Z})=0$. We denote the set of all characteristic covectors by $\operatorname{Char}(X) \subset H^2(X; \mathbb{Z})$. For a $\operatorname{spin}^c$ structure $\mathfrak{s}\in \operatorname{Spin}^c(X)$, the class $c_1(\mathfrak{s})$ is a characteristic covector, and the assignment $\mathfrak{s}\mapsto c_1(\mathfrak{s})$ defines a bijection $\operatorname{Spin}^c(X)\to \operatorname{Char}(X)$ if $H_1(X;\mathbb{Z})=0$. For any $\mathfrak{s}\in \operatorname{Spin}^c(X)$ and $\ell\in H^2(X;\mathbb{Z})$, we have \begin{equation}\label{eq:spin^c-char_assignment}
    c_1(\mathfrak{s}+\ell) = c_1(\mathfrak{s})+2\ell.
\end{equation}

If $-X$ denotes the orientation reversal of $X$, then there is a canonical identification $\operatorname{Spin}^c(X)\cong \operatorname{Spin}^c(-X)$ given by swapping the $\pm$-spinor bundles. In the $3$-dimensional case, there is a canonical identification $\operatorname{Spin}^c(Y)\cong \operatorname{Spin}^c(-Y)$ given by changing the sign of Clifford multiplication. These identifications are compatible with the actions of $H^2(X; \mathbb{Z})$ and $H^2(Y; \mathbb{Z})$, respectively, and we will implicitly use these identifications throughout this paper.

Now suppose $X$ has a nonempty boundary $\partial X=Y$, where $Y$ is given the boundary orientation. The restriction map $\operatorname{Spin}^c(X)\to \operatorname{Spin}^c(Y)$ is surjective if and only if the map $H^2(X;\mathbb{Z})\to H^2(Y;\mathbb{Z})$ is surjective. This is the case, for example, if $H_1(X;\mathbb{Z})=0$. Note that this restriction is also compatible with conjugation and with the orientation reversal identifications described above.

Furthermore, $\operatorname{spin}^c$ structures can be glued along boundaries in the following sense. Let $X_1, X_2$ be compact, oriented, smooth $4$-manifolds and suppose $f\colon\partial X_2\to \partial X_1$ is an orientation-reversing diffeomorphism. If $\mathfrak{s}_i\in \operatorname{Spin}^c(X_i)$ for $i=1,2$ are $\operatorname{spin}^c$ structures satisfying $f^*(\mathfrak{s}_1|_{\partial X_1}) = \mathfrak{s}_2|_{\partial X_2}$, then there exists a $\operatorname{spin}^c$ structure $\mathfrak{s}\in \operatorname{Spin}^c(W)$ on the closed $4$-manifold $W:=X_1\cup_f X_2$ such that $\mathfrak{s}|_{X_i}=\mathfrak{s}_i$ for $i=1,2$. Strictly speaking, $f^*(\mathfrak{s}_1|_{\partial X_1})$ is a $\operatorname{spin}^c$ structure on $-\partial X_2$, and we are implicitly using the canonical identification $\operatorname{Spin}^c(-\partial X_2)\cong \operatorname{Spin}^c(\partial X_2)$ discussed above.

\subsection{Intersection lattices and spin$^c$-gluing}\label{subsec:spin^c_and_Donaldson}
In this subsection, we explore the relationship between the gluing of $\operatorname{spin}^c$ structures and intersection forms. 

Let us first review the basic notions of lattices. A \emph{lattice} is a pair $(L,Q)$ consisting of a free abelian group $L$ of finite rank and a symmetric bilinear form $Q\colon L\times L\to \mathbb{Q}$. For $v,w\in L$, we will simply write $v\cdot w=Q(v,w)$ and $v^2=Q(v,v)$. We will only consider the case where $Q$ is \emph{nondegenerate}, i.e., for any nonzero $v\in L$, there exists some $w\in L$ such that $v\cdot w\neq 0$. An \emph{embedding} (resp.\ \emph{isomorphism}) $\iota:(L_1,Q_1)\to (L_2,Q_2)$ of lattices is a monomorphism (resp.\ isomorphism) $\iota\colon L_1\to L_2$ of groups that preserves the forms, i.e., $\iota(x)\cdot \iota(y)=x\cdot y$ for all $x,y\in L_1$.

Assume that $(L,Q)$ is an \emph{integral} lattice, i.e., the image of $Q$ lies in $\mathbb{Z}$. Letting $L_{\mathbb{Q}}:=L\otimes \mathbb{Q}$, the form $Q$ naturally extends to a $\mathbb{Q}$-bilinear symmetric form $L_{\mathbb{Q}}\times L_{\mathbb{Q}}\to \mathbb{Q}$, still denoted by $Q$. The subset \[
L^*:=\left\{v\in L_{\mathbb{Q}}:Q(v,w)\in \mathbb{Z} \text{ for all $w\in L$} \right\}
\]
together with the restriction $Q|_{L^*\times L^*}$ is called the \emph{dual lattice} of $L$. We have $L\subset L^*$, with equality if and only if $(L,Q)$ is \emph{unimodular}, i.e., $\det Q=\pm 1$. The dual lattice $L^*$ can be canonically identified with the group $\operatorname{Hom}(L,\mathbb{Z})$; an element $\alpha\in \operatorname{Hom}(L,\mathbb{Z})$ corresponds to the element $v_\alpha\in L^*$ which satisfies $Q(v_\alpha,v)=\alpha(v)$ for all $v\in L$.

For a positive integer $n$, let $\{e_1, \dots, e_n\}$ be the standard basis for $\mathbb{Z}^n$. We denote by $-\mathbb{Z}^n$ the standard diagonal negative definite lattice structure on $\mathbb{Z}^n$, where the pairing is given by $e_i\cdot e_j = -\delta_{ij}$, with $\delta_{ij}$ being the Kronecker delta. We denote by $-\mathbb{Q}^n$ the $\mathbb{Q}$-linear extension of the lattice $-\mathbb{Z}^n$. Also, for any nonzero integer $m\in \mathbb{Z}$, we denote by $\langle m\rangle$ the lattice structure on $\mathbb{Z}$ where a generator $v$ satisfies $v^2=m$. The standard lattice $-\mathbb{Z}^n$ is isomorphic to the direct sum \[
\langle -1\rangle^n =\overbrace{\langle -1\rangle\oplus \cdots \oplus \langle-1\rangle}^n. \]

For a compact, oriented $4$-manifold $X$ whose second homology group $H_2(X;\mathbb{Z})$ is free, we will denote the lattice $(H_2(X;\mathbb{Z}), Q_X)$ simply by $Q_X$, where $Q_X$ is the intersection form of $X$. Donaldson's diagonalization theorem \cite{Donaldson-1987} states that if $X$ is smooth and closed, and $Q_X$ is negative definite, then the lattice $Q_X$ is isomorphic to $-\mathbb{Z}^{b_2(X)}$. 

Suppose $X$ is a compact, oriented $4$-manifold with $H_1(X;\mathbb{Z})=0$ whose boundary $\partial X$ satisfies $H^1(\partial X;\mathbb{Z})=0$ (i.e., each component of $\partial X$ is a rational homology $3$-sphere). The universal coefficient theorem gives an isomorphism $H^2(X;\mathbb{Z})\cong \operatorname{Hom}(H_2(X;\mathbb{Z}),\mathbb{Z})$, and we will regard \begin{equation}\label{eq:second_cohomology_as_dual_lattice}
    H^2(X;\mathbb{Z})\subset H_2(X;\mathbb{Z})\otimes \mathbb{Q}
\end{equation}
as the dual lattice $Q_X^*$ of $Q_X$. If $\{v_1,\dots,v_n\}$ is a basis for $H_2(X;\mathbb{Z})$ with dual basis $\{v_1^*,\dots,v_n^*\}$ for $H^2(X;\mathbb{Z})$ (so that $\langle v_i^*,v_j\rangle = \delta_{ij}$), then the matrix $(v_i^* \cdot v_j^*)$ is the inverse of the matrix $(v_i\cdot v_j)$ representing the lattice $Q_X$. 

With these conventions established, we now translate the gluing of $\operatorname{spin}^c$ structures into a statement about lattice embeddings.

\begin{proposition}\label{prop:spin^c_gluing}
    For $i=1,2$, let $X_i$ be a compact, oriented, smooth $4$-manifold with negative definite intersection form $Q_{X_i}$ such that $H^1(\partial X_i;\mathbb{Z})=0$ and $H_1(X_i;\mathbb{Z})=0$, and let $\mathfrak{s}_i\in \operatorname{Spin}^c(X_i)$. If $f\colon\partial X_2\to \partial X_1$ is an orientation-reversing diffeomorphism such that $f^*(\mathfrak{s}_1|_{\partial X_1})=\mathfrak{s}_2|_{\partial X_2}$, then there is a full-rank lattice embedding \[
    \iota\colon Q_{X_1}\oplus Q_{X_2} \longrightarrow -\mathbb{Z}^N,
    \]
    where $N=b_2(X_1)+b_2(X_2)$, such that its $\mathbb{Q}$-linear extension \[
    \iota_{\mathbb{Q}}\colon (Q_{X_1}\otimes \mathbb{Q})\oplus (Q_{X_2}\otimes \mathbb{Q}) \longrightarrow -\mathbb{Q}^N
    \]  maps $c_1(\mathfrak{s}_1)+c_1(\mathfrak{s}_2) \in Q_{X_1}^*\oplus Q_{X_2}^*$ to $\sum_{i=1}^N a_i e_i\in -\mathbb{Z}^N$ for some odd integers $a_1,\dots,a_N$ satisfying \[
c_1(\mathfrak{s}_1)^2+c_1(\mathfrak{s}_2)^2=-\sum_{i=1}^N a_i^2.
    \]
    Moreover, we have $\iota(Q_{X_1})^\perp =\iota(Q_{X_2})$ and $\iota(Q_{X_2})^\perp=\iota(Q_{X_1})$ in $-\mathbb{Z}^N$. 
\end{proposition}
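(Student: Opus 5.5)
Glue the two pieces into the closed manifold $W:=X_1\cup_f X_2$, which by the gluing result recalled in Subsection \ref{subsec:spin^c_structures} carries a $\operatorname{spin}^c$ structure $\mathfrak{s}$ with $\mathfrak{s}|_{X_i}=\mathfrak{s}_i$. Then apply Donaldson's theorem to $W$ and track $c_1(\mathfrak{s})$.

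The first step is the homological bookkeeping. Since each component of $Y:=\partial X_1$ is a rational homology sphere, $H_1(Y;\mathbb{Q})=H_2(Y;\mathbb{Q})=0$. Mayer--Vietoris then gives that $H_2(X_1;\mathbb{Z})\oplus H_2(X_2;\mathbb{Z})\to H_2(W;\mathbb{Z})$ is injective with finite cokernel. Modulo torsion we get $b_2(W)=N$, and $Q_W$ restricts to $Q_{X_1}\oplus Q_{X_2}$, which is orthogonal because classes in different pieces can be made disjoint. Hence $Q_W$ is negative definite of rank $N$.

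There is one subtlety. $H_2(W;\mathbb{Z})$ may have torsion (coming from $H_1(Y)$), and $W$ need not be simply connected. So I apply Donaldson's theorem to $H_2(W;\mathbb{Z})/\mathrm{Tors}$. Its statement for closed smooth negative definite manifolds holds without simple connectivity, so this lattice is isomorphic to $-\mathbb{Z}^N$. Composing the inclusion with this isomorphism gives $\iota$, which is full rank.

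Next comes the characteristic vector. Let $\mathbf{K}$ be the image of $PD(c_1(\mathfrak{s}))$ modulo torsion in $-\mathbb{Z}^N$. Since $c_1(\mathfrak{s})$ is characteristic on $W$, pairing with each $e_i$ gives $\mathbf{K}\cdot e_i\equiv e_i^2=-1 \pmod 2$. So $\mathbf{K}=\sum a_ie_i$ with every $a_i$ odd, and $\mathbf{K}^2=-\sum a_i^2$.

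To identify $\mathbf{K}$ with $\iota_{\mathbb{Q}}(c_1(\mathfrak{s}_1)+c_1(\mathfrak{s}_2))$, note that both sides are determined by their pairings against the finite-index sublattice $\iota(Q_{X_1}\oplus Q_{X_2})$. For $v\in H_2(X_i)$ we have $\mathbf{K}\cdot\iota(v)=\langle c_1(\mathfrak{s}),v\rangle=\langle c_1(\mathfrak{s}_i),v\rangle$ by naturality of $c_1$ under restriction. This equals the pairing of $c_1(\mathfrak{s}_i)\in Q_{X_i}^*$ with $v$, under the convention \eqref{eq:second_cohomology_as_dual_lattice}. Nondegeneracy over $\mathbb{Q}$ then gives equality. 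The square identity follows because the two summands are orthogonal in the rational extension, so $\mathbf{K}^2=c_1(\mathfrak{s}_1)^2+c_1(\mathfrak{s}_2)^2$.

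Finally, consider the orthogonal complements. Clearly $\iota(Q_{X_2})\subseteq\iota(Q_{X_1})^\perp$, and both have rank $b_2(X_2)$. The content of the claim is primitivity: I must show that $x\in -\mathbb{Z}^N$ orthogonal to $\iota(H_2(X_1))$ lies in $\iota(H_2(X_2))$. I expect this to be the main obstacle.

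My plan is to use the geometry. A class $x\in H_2(W)$ pairing trivially with all of $H_2(X_1)$ restricts, via $H_2(W)\to H_2(W,X_2)\cong H_2(X_1,Y)$, to a class whose image under the boundary map to $H_1(Y)$ must vanish. I will check this using $H_1(X_1)=0$ and the linking form: the relevant map $H_2(X_1,Y)\to\operatorname{Hom}(H_2(X_1),\mathbb{Z})$ is an isomorphism by Lefschetz duality, since $H_1(X_1)=0$. So $x$ maps to zero in $H_2(X_1,Y)$, and by exactness $x$ comes from $H_2(X_2)$ (modulo torsion). The $H_1(X_i)=0$ hypotheses are essential here, and care is needed with torsion in $H_2(W)$. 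The symmetric statement follows in the same way.
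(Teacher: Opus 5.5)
Your proposal is correct and follows the same route as the paper. You glue to $W$, extend the $\operatorname{spin}^c$ structure, apply Donaldson's theorem to $Q_W$, identify the image of $c_1(\mathfrak{s})$ with $\iota_{\mathbb{Q}}(c_1(\mathfrak{s}_1)+c_1(\mathfrak{s}_2))$ by pairing against the full-rank sublattice, and read off oddness of the $a_i$ from characteristicity.

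The one real difference is that you prove $\iota(Q_{X_1})^\perp=\iota(Q_{X_2})$ directly. You use excision $H_2(W,X_2)\cong H_2(X_1,\partial X_1)$, the pairing isomorphism $H_2(X_1,\partial X_1)\cong\operatorname{Hom}(H_2(X_1),\mathbb{Z})$ (valid because $H_1(X_1)=0$), and then exactness of the pair. This is valid and more self-contained than the paper, which just appeals to an external lemma.

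Your torsion caveat is unnecessary. Mayer--Vietoris with $H_1(X_1)\oplus H_1(X_2)=0$ embeds $H_1(W)$ into the free group $H_0(\partial X_1)$, so $H_2(W;\mathbb{Z})\cong H^2(W;\mathbb{Z})$ is torsion-free. The linking form also plays no role in the argument.
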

\begin{proof} Consider the smooth, closed, oriented $4$-manifold $W:=X_1\cup _f X_2$. Since $H^1(\partial X_1;\mathbb{Z})=0$, a Mayer-Vietoris sequence yields a full rank lattice embedding $Q_{X_1}\oplus Q_{X_2}\to Q_W$. It follows that its $\mathbb{Q}$-linear extension \begin{equation}\label{eq:Q-linear_extension}
    (Q_{X_1}\otimes \mathbb{Q})\oplus (Q_{X_2}\otimes \mathbb{Q}) \longrightarrow Q_W\otimes \mathbb{Q}
\end{equation}
is an isomorphism. 

Recall from Section \ref{subsec:spin^c_structures} that there is a $\operatorname{spin}^c$ structure $\mathfrak{s}\in \operatorname{Spin}^c(W)$ on $W$ such that $\mathfrak{s}|_{X_i}=\mathfrak{s}_i$ for $i=1,2$. In particular, we have $c_1(\mathfrak{s})|_{X_i}=c_1(\mathfrak{s}_i)\in H^2(X_i;\mathbb{Z})$ for $i=1,2$. This precisely means that the image of $c_1(\mathfrak{s}_1)+c_1(\mathfrak{s}_2)\in Q_{X_1}^*\oplus Q_{X_2}^*$ under the isomorphism \eqref{eq:Q-linear_extension} is $c_1(\mathfrak{s})$. Since $W$ is closed, we have $H^2(W;\mathbb{Z})=Q_W^*=Q_W\subset Q_W\otimes \mathbb{Q}$, so we may view $c_1(\mathfrak{s})\in Q_W$.  

On the other hand, since $W$ has a negative definite intersection form with $b_2(W)=b_2(X_1)+b_2(X_2)=N$, Donaldson's theorem implies that there is an isomorphism $Q_W\cong -\mathbb{Z}^N$ of lattices. Composing this with our embedding $Q_{X_1}\oplus Q_{X_2}\to Q_W$ gives a lattice embedding \[
    \iota\colon Q_{X_1}\oplus Q_{X_2} \longrightarrow Q_W\cong -\mathbb{Z}^N.
    \] 
By the argument of \cite[Lemma 2.4]{AMP-2025}, the sublattices $\iota(Q_{X_1})$ and $\iota(Q_{X_2})$ are mutually orthogonal complements, meaning $\iota(Q_{X_1})^\perp =\iota(Q_{X_2})$ and $\iota(Q_{X_2})^\perp=\iota(Q_{X_1})$.

Since $c_1(\mathfrak{s})\in \operatorname{Char}(W)$ is a characteristic covector, its image in $-\mathbb{Z}^N$ is of the form $\sum_{i=1}^N a_ie_i$ for some odd integers $a_1,\dots,a_N$ satisfying \[
-\sum_{i=1}^N a_i^2=\left(\sum_{i=1}^N a_ie_i\right)^2=c_1(\mathfrak{s})^2=\left(c_1(\mathfrak{s}_1)+c_1(\mathfrak{s}_2) \right)^2 = c_1(\mathfrak{s}_1)^2+c_1(\mathfrak{s}_2)^2. 
\] The image of $c_1(\mathfrak{s}_1)+c_1(\mathfrak{s}_2)$ under the $\mathbb{Q}$-linear extension  \[
\iota_{\mathbb{Q}}=\iota\otimes \mathbb{Q} \colon (Q_{X_1}\otimes \mathbb{Q})\oplus (Q_{X_2}\otimes \mathbb{Q}) \longrightarrow -\mathbb{Q}^N 
\]
is precisely $\sum_{i=1}^N a_ie_i$. This completes the proof.
\end{proof}

\begin{remark}
Let $W$ be a smooth, closed, oriented $4$-manifold such that $H_2(W;\mathbb{Z})$ is free abelian of rank $N$ (as in the proof of Proposition \ref{prop:spin^c_gluing}). Assume that the intersection form of $W$ is negative definite, so that $Q_W\cong -\mathbb{Z}^N$ by Donaldson's theorem. Let $e_1,\dots,e_N\in H_2(W;\mathbb{Z})$ be a standard basis diagonalizing $Q_W$, and let $e_1^*,\dots, e_N^*\in H^2(W;\mathbb{Z})$ denote the corresponding dual basis vectors. 

There are two natural ways to identify $H^2(W;\mathbb{Z})$ with $H_2(W;\mathbb{Z})$. The first is to use Poincar\'e duality, $\operatorname{PD}\colon H^2(W;\mathbb{Z})\cong H_2(W;\mathbb{Z})$. The second is to identify $H^2(W;\mathbb{Z})$ with $\operatorname{Hom}(H_2(W;\mathbb{Z}),\mathbb{Z})$ via the universal coefficient theorem, and then identify $\operatorname{Hom}(H_2(W;\mathbb{Z}),\mathbb{Z})$ with $H_2(W;\mathbb{Z})$ noting that $Q_W$ is unimodular. Under the second identification, a class $\alpha \in H^2(W;\mathbb{Z})$ corresponds to the unique element $v_\alpha \in H_2(W;\mathbb{Z})$ satisfying $Q_W( v_\alpha,v) = \langle \alpha,v \rangle$ for all $v \in H_2(W;\mathbb{Z})$. However, since $Q_W(e_i,\operatorname{PD}(e_j^*))=\langle e_j^*,e_i\rangle=\delta_{ij}$, the two identifications are the same: both identifications map the dual basis element $e_j^*$ to $-e_j \in H_2(W;\mathbb{Z})$.     
\end{remark}

\section{Spin$^c$ structures on lens spaces}\label{sec:lens_spinc} 
In this section, we provide a detailed analysis of $\operatorname{spin}^c$ structures on lens spaces. The results established in this section will serve as key ingredients in the proof of Theorem \ref{thm:main}.

For relatively prime integers $p>q>0$, we define the lens space $L(p,q)$ as the oriented $3$-manifold obtained as the quotient of $S^3 \subset \mathbb{C}^2$ by the $\mathbb{Z}_p$-action generated by 
 \[
\left(z_1,z_2\right)\longmapsto \left(\zeta z_1, \zeta^q z_2\right),
\]
where $\zeta=e^{2\pi i/p}$. Under this orientation convention, $L(p,q)$ is the result of $-p/q$-surgery on $S^3$ along the unknot, and is precisely the link of a cyclic quotient singularity of type $\tfrac{1}{p}(1,q)$.

Consider the complex line bundle \begin{equation}\label{eq:line_bundle_L_rho}
    \mathcal{L}_{p,q} :=\frac{S^3\times \mathbb{C}}{\mathbb{Z}_p}=\frac{S^3\times \mathbb{C}}{\left(z_1,z_2,v\right)\sim \left(\zeta z_1, \zeta^q z_2, \zeta v\right) }
\end{equation}
over $L(p,q)$. Its first Chern class $c_1(\mathcal{L}_{p,q})$ is a generator of $H^2(L(p,q);\mathbb{Z})\cong \mathbb{Z}_p$. In particular, every complex line bundle over $L(p,q)$ is isomorphic to $\mathcal{L}_{p,q}^{\otimes k}$ for some integer $k$.

Using the standard Heegaard diagram of $L(p,q)$, Ozsv\'ath and Szab\'o provided an identification $\operatorname{Spin}^c(L(p,q))\cong \mathbb{Z}_p$ \cite[Section 4.1]{Ozsvath-Szabo-2003} (note that their orientation convention for $L(p,q)$ is opposite to ours). We denote by $\mathfrak{s}_{p,q}(i) \in \operatorname{Spin}^c(L(p,q))$ the $\operatorname{spin}^c$ structure corresponding to $i\in \mathbb{Z}_p$ under this identification. Recall from Section \ref{subsec:spin^c_structures} that the difference of two $\operatorname{spin}^c$ structures on $L(p,q)$ is a well-defined element in $H^2(L(p,q);\mathbb{Z})$. The following proposition shows that the Ozsv\'ath-Szab\'o identification $\mathbb{Z}_p\to\operatorname{Spin}^c(L(p,q))$ given by $i\mapsto \mathfrak{s}_{p,q}(i)$ is an affine map.

\begin{proposition}[{\cite[Proposition 4.7]{Ue-2009}}]\label{prop:spin^c_affine} For any $i\in \mathbb{Z}_p$ and $k\in \mathbb{Z}$, the difference $\mathfrak{s}_{p,q}(i+k)-\mathfrak{s}_{p,q}(i)$ is represented by $c_1(\mathcal{L}_{p,q}^{\otimes k})=k\cdot c_1(\mathcal{L}_{p,q})$, i.e., \[
\mathfrak{s}_{p,q}(i+k)=\mathfrak{s}_{p,q}(i)+k\cdot c_1(\mathcal{L}_{p,q})\in \operatorname{Spin}^c(L(p,q)),
\]
where $i+k$ is evaluated modulo $p$.  
\end{proposition}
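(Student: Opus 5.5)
We need to prove that the Ozsváth–Szabó labeling $i\mapsto\mathfrak{s}_{p,q}(i)$ is affine, with translation by $k$ realized by $k\cdot c_1(\mathcal{L}_{p,q})$. The plan is to realize $L(p,q)$ as the boundary of a simply connected plumbing, where $\operatorname{spin}^c$ structures on the boundary are described by characteristic covectors. The Ozsváth–Szabó labels then become explicit under that description, and the difference of two labels becomes computable.

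\emph{Step 1: the plumbing model.} Take the linear plumbing $X$ bounded by $L(p,q)$ (in our orientation, the negative definite plumbing $X(p,q)$, or equivalently the $-p/q$-surgery trace on the unknot after slam-dunks). Since $H_1(X)=0$, restriction $\operatorname{Spin}^c(X)\to\operatorname{Spin}^c(L(p,q))$ is surjective. The set $\operatorname{Spin}^c(L(p,q))$ is identified with $\operatorname{Char}(X)$ modulo $2\,\mathrm{im}(Q_X)$, and $H^2(L(p,q))$ is identified with $H^2(X)/\mathrm{im}(Q_X)$. Under these identifications, the torsor action $\mathfrak{t}\mapsto\mathfrak{t}+\ell$ corresponds to adding $2\tilde\ell$ to a characteristic covector, where $\tilde\ell$ is a lift of $\ell$, by \eqref{eq:spin^c-char_assignment}.

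\emph{Step 2: locate $c_1(\mathcal{L}_{p,q})$.} The group $H^2(X)/\mathrm{im}Q_X\cong\mathbb{Z}_p$ is generated by the dual class $v_1^*$ (or $v_s^*$, depending on which end is used). We identify $\mathcal{L}_{p,q}$ with the restriction of a line bundle on $X$, namely the one coming from the exceptional divisor structure of the resolution of $\tfrac1p(1,q)$, or from the $\mathbb{Z}_p$-equivariant trivial bundle with weight $1$. The aim is to show that its $c_1$ is $\pm v_1^*$, up to the sign fixed by the weight convention $(\zeta z_1,\zeta^q z_2,\zeta v)$. For this we compute holonomy around the core circle $\{z_2=0\}/\mathbb{Z}_p$, which generates $H_1(L(p,q))$: the bundle has holonomy $\zeta$ there, and this pins down the class by the linking pairing.

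\emph{Step 3: compare with Ozsváth–Szabó's labeling.} The Ozsváth–Szabó labeling from the standard genus-one Heegaard diagram is known to agree with a labeling by characteristic covectors on the surgery/plumbing. Concretely, $\mathfrak{s}(i)$ is the restriction of a $\operatorname{spin}^c$ structure $\mathfrak{t}_i$ on the surgery cobordism with $\langle c_1(\mathfrak{t}_i),[F]\rangle+p\equiv 2i$ (suitably normalized), which is the convention used in their $d$-invariant formula. With that formula, passing from $i$ to $i+k$ adds $2k$ to the evaluation on the generator, that is, adds $2k$ times the generator of $H^2$. By Step 1 this equals the action of $k$ times that generator, and by Step 2 the generator is $c_1(\mathcal{L}_{p,q})$.

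The main obstacle is the sign and orientation bookkeeping in Steps 2 and 3. Ozsváth–Szabó use the opposite orientation, and the generator of $\mathbb{Z}_p$ could come out as $-c_1(\mathcal{L}_{p,q})$ or as $q$ or $q^{-1}$ times it. I would first fix this by the holonomy and linking-form computation on the core circle, checking it against the intersection point count in the Heegaard diagram, where consecutive intersection points differ by the class of the core curve.
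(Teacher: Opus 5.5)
The paper gives no proof of this proposition; it imports it from Ue's Proposition~4.7. Your argument therefore has to stand on its own, and it stops exactly where the content begins.

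\textbf{Affineness is the easy part.} Affineness with \emph{some} constant step is immediate. Once the intersection points $x_0,\dots,x_{p-1}$ of the standard genus-one diagram are enumerated consecutively along an attaching curve, Ozsv\'ath--Szab\'o's formula $\mathfrak{s}_w(y)-\mathfrak{s}_w(x)=\operatorname{PD}[\epsilon(x,y)]$ shows that consecutive labels differ by one fixed class. That class is necessarily a generator of $H^2(L(p,q);\mathbb{Z})\cong\mathbb{Z}_p$. What the proposition actually asserts is that this generator is $+c_1(\mathcal{L}_{p,q})$, and not $-c_1(\mathcal{L}_{p,q})$ or $\pm q^{\pm1}c_1(\mathcal{L}_{p,q})$. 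These are distinct classes as soon as $q\not\equiv\pm1\pmod p$. Everything downstream, e.g.\ $\mathfrak{s}_{K_{p,q}}|_{L(p,q)}=\mathfrak{s}_{p,q}(0)$ and the formula for $f_{p,q}^*$, depends on the right choice. Your last paragraph lists exactly these alternatives and defers the decision to ``sign and orientation bookkeeping.'' So the decisive step is missing, not merely unpolished.

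\textbf{Step~3 does not supply it.} The normalization $\langle c_1(\mathfrak{t}_i),[F]\rangle+p\equiv 2i$ presupposes a single two-handle cobordism from $S^3$ with one surface $F$. That exists only when $L(p,q)$ is integral surgery on the unknot, i.e.\ $q\equiv\pm1\pmod p$, which are precisely the cases where the ambiguity is just a sign. For general $q$ the plumbing $X(p,q)$ has $s$ vertices, and the choice of end is not a convention. One has $v_1^*|_{L(p,q)}=q\,v_s^*|_{L(p,q)}$, and it is $v_s^*$ that Ue identifies with $c_1(\mathcal{L}_{p,q})$. So your stated target $\pm v_1^*$ is wrong in general. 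Moreover, claiming that raising the Ozsv\'ath--Szab\'o label by one raises a covector's value on a particular end vertex by $2$ is, given Step~2, equivalent to the proposition itself. Since their labels are defined through intersection points rather than covectors, this cannot be quoted as known.

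\textbf{Your closing heuristic is ambiguous in exactly the relevant way.} The $\operatorname{spin}^c$ structures of points consecutive along $\alpha$ differ by $\pm\operatorname{PD}$ of the core of the solid torus in which $\beta$ bounds a disk, and vice versa. The two cores $C_1=\{z_1=0\}/\mathbb{Z}_p$ and $C_2=\{z_2=0\}/\mathbb{Z}_p$ satisfy $[C_1]=q'[C_2]$ with $qq'\equiv 1\pmod p$.

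\textbf{What a complete argument needs.} Your holonomy--linking idea is the right tool. Holonomy $\zeta^{\pm1}$ around $C_2$, together with $\mathrm{lk}(C_1,C_2)=\pm1/p$, gives $c_1(\mathcal{L}_{p,q})=\pm\operatorname{PD}[C_1]$. To finish, you must realize Ozsv\'ath--Szab\'o's standard diagram as the $\mathbb{Z}_p$-quotient of the Clifford torus, with their exact orientation, choice of $\alpha$ versus $\beta$, and ordering of the $x_i$. Then compute $[\epsilon(x_i,x_{i+1})]$ there and fix the sign. The detour through $X(p,q)$ does not help: it only adds a second identification (labels versus covectors) that would itself need proof.
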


Next, view $D^4$ as the closed unit disk in $\mathbb{C}^2$, and consider the cone \[
C:=CL(p,q)=\frac{D^4}{\mathbb{Z}_p}=\frac{D^4}{\left(z_1,z_2\right)\sim \left(\zeta z_1, \zeta^q z_2\right)},
\]
which is a local model of a cyclic quotient surface singularity of type $\tfrac{1}{p}(1,q)$. Over $D^4$, the canonical line bundle $K_{D^4}\to D^4$ is trivial, because the $2$-form $dz_1\wedge dz_2$ defines a nowhere vanishing section. Also, the pullback of $dz_1\wedge dz_2$ is given by \[
\zeta^*(dz_1\wedge dz_2)=\zeta^{q+1}(dz_1\wedge dz_2),
\]
where we regard $\zeta$ as a map $D^4\to D^4$ defined by $\zeta \left(z_1,z_2\right)= \left(\zeta z_1, \zeta^q z_2\right)$. This shows that the canonical line orbibundle $K_{C}$ over the cone $C$ is given by the quotient \begin{equation}\label{eq:orbibundle_over_cone}
    K_{C}\cong  \frac{D^4\times \mathbb{C}}{\mathbb{Z}_p} =\frac{D^4\times \mathbb{C}}{(z_1,z_2,v)\sim (\zeta z_1,\zeta^q z_2, \zeta^{-(q+1)}v )},
\end{equation}
noting that $K_{C}$ is defined to be the quotient of $K_{D^4}$ under the identification \[
 ( z_1,z_2,v) \sim ( \zeta(z_1,z_2), (\zeta^*)^{-1}v)
\]
for $(z_1,z_2)\in D^4$ and $v\in K_{D^4}|_{(z_1,z_2)}$. In particular, the restriction $K_C|_{L(p,q)}$ of $K_C$ to $L(p,q)$ is isomorphic to $\mathcal{L}_{p,q}^{\otimes -(q+1)}$. 

Now recall that $C$ carries a canonical $\operatorname{spin}^c$ structure $\mathfrak{s}_{\operatorname{can}}$ induced by its complex structure. Its $+$-spinor bundle is given by $\underline{\mathbb{C}} \oplus K_C^{-1}$, where $\underline{\mathbb{C}}$ denotes the trivial complex line bundle. The determinant line bundle of this canonical $\operatorname{spin}^c$ structure is the anti-canonical line bundle $K_C^{-1}$. Note that the conjugate $\overline{\mathfrak{s}}_{\mathrm{can}}$ is obtained by twisting $\mathfrak{s}_{\mathrm{can}}$ by $K_C$, and its determinant line bundle is precisely $K_C$.

\begin{remark}\label{rmk:orbifold_spin^c}
In the discussion above, and in the proof of Proposition \ref{prop:canonical_spin^c_structure} below, $\operatorname{spin}^c$ structures on the cone $C$ are understood in the orbifold sense. If one prefers to work without orbifold $\operatorname{spin}^c$ structures, one may instead work on the smooth manifold $C-\{0\}$. Indeed, the restriction map $\operatorname{Spin}^c_{\mathrm{orb}}(C)\to \operatorname{Spin}^c(C-\{0\})$ is a bijection \cite[Proposition 2]{Fukumoto-2000}.    
\end{remark}

The following proposition, communicated to us by Ue, describes the restriction of $\overline{\mathfrak{s}}_{\mathrm{can}}$ to $L(p,q)$ in terms of the Ozsv\'ath--Szab\'o identification. We include a sketch of the proof for completeness.

\begin{proposition}[{\cite{Ue-personal}}]\label{prop:canonical_spin^c_structure} 
    If $p$ is odd, then 
    \[
        \overline{\mathfrak{s}}_{\mathrm{can}}|_{L(p,q)}=\mathfrak{s}_{p,q}(p-1)\in \operatorname{Spin}^c(L(p,q)).
    \]
    If $p$ is even and $\sigma(q,p,+1)\neq \sigma(q,p,-1)$, then the same conclusion holds, where 
    \begin{equation}\label{eq:def_of_sigma}
        \sigma(q,p,\epsilon)=\frac{1}{p}\sum_{k=1}^{p-1} \left(\cot \frac{\pi k}{p}\cot \frac{\pi k q}{p} +2\epsilon^k \csc \frac{\pi k }{p}\csc \frac{\pi k q}{p} \right).
    \end{equation}
\end{proposition}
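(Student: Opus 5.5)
The plan is to determine $\overline{\mathfrak{s}}_{\mathrm{can}}|_{L(p,q)}$ in two stages. First we pin it down up to a small ambiguity using the determinant line bundle. Then we resolve the ambiguity with an invariant that distinguishes the remaining candidates.

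\emph{Stage one: the determinant line bundle.} The determinant line bundle of $\overline{\mathfrak{s}}_{\mathrm{can}}|_{L(p,q)}$ is $K_C|_{L(p,q)}\cong \mathcal{L}_{p,q}^{\otimes -(q+1)}$. So its first Chern class is $-(q+1)c_1(\mathcal{L}_{p,q})$.

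On the Ozsv\'ath--Szab\'o side we need $c_1(\mathfrak{s}_{p,q}(i))$. By Proposition \ref{prop:spin^c_affine} and the formula $c_1(\mathfrak{s}+\ell)=c_1(\mathfrak{s})+2\ell$, we get
\[
c_1(\mathfrak{s}_{p,q}(i))=c_1(\mathfrak{s}_{p,q}(0))+2i\,c_1(\mathcal{L}_{p,q}).
\]
It therefore suffices to compute $c_1$ of a single reference structure. We would do this by taking a self-conjugate $\operatorname{spin}^c$ structure: conjugation acts on $\mathbb{Z}_p$ by an affine involution $i\mapsto c-i$, and the self-conjugate structures have $c_1=0$. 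The involution can be read off from the symmetry of the Heegaard diagram (or from known $d$-invariant symmetries, $d(L(p,q),i)=d(L(p,q),q-1-i)$ up to the paper's orientation conventions). This gives $c_1(\mathfrak{s}_{p,q}(i))=(2i+1-q)\,c_1(\mathcal{L}_{p,q})$ up to a global sign, which must be fixed by one explicit check (e.g.\ $p=2$).

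Matching with $-(q+1)$ then gives $2i\equiv -2 \pmod p$.
\begin{itemize}
\item If $p$ is odd, $2$ is invertible, so $i\equiv p-1$ and we are done. Since $H_1=\mathbb{Z}_p$ has odd order, $c_1$ determines the $\operatorname{spin}^c$ structure.
\item If $p$ is even, there are exactly two solutions, $i=p-1$ and $i=p/2-1$. They differ by the $2$-torsion class $(p/2)\,c_1(\mathcal{L}_{p,q})$, i.e.\ by the two spin-structure cosets.
\end{itemize}

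\emph{Stage two: the even case.} To separate the two candidates we compare a numerical invariant that distinguishes them. The invariant is the $d$-invariant, or equivalently the Fukumoto--Furuta--Ue type $w$/eta-invariant of the $\operatorname{spin}^c$ structure computed on the orbifold cone.

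For $\overline{\mathfrak{s}}_{\mathrm{can}}$ on $C=D^4/\mathbb{Z}_p$, the equivariant index theorem (Atiyah--Bott--Lefschetz / the eta-invariant formula for quotients of $S^3$) expresses the relevant correction term as $\frac{1}{p}\sum_{k=1}^{p-1}$ of trigonometric terms. The twist by $\zeta^{k(\cdot)}$ in these terms produces $\epsilon^k$. The two candidates $i=p-1$ and $i=p/2-1$ differ exactly by the character $(-1)^k$, so their invariants are the two quantities $\sigma(q,p,+1)$ and $\sigma(q,p,-1)$ (suitably normalized). On the other hand, the $d$-invariant of $\mathfrak{s}_{p,q}(i)$ is given by the Ozsv\'ath--Szab\'o recursive formula. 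One identifies that $i=p-1$ corresponds to the value matching the canonical one, either via Dedekind-sum reciprocity or via a direct comparison with the plumbing $X(p,p-q)$, where $\overline{\mathfrak{s}}_{\mathrm{can}}$ extends with $c_1$ given by the adjunction evaluations. Under the hypothesis $\sigma(q,p,+1)\neq\sigma(q,p,-1)$, the invariants of the two candidates differ, so the computed value forces $i=p-1$.

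\emph{Main obstacle.} The main difficulty is this second stage: correctly identifying which of $\sigma(q,p,\pm1)$ corresponds to $\mathfrak{s}_{p,q}(p-1)$ under the Ozsv\'ath--Szab\'o labeling, with all orientation and sign conventions tracked. I would handle it by cross-checking on the plumbing $X(p,p-q)$. There, the adjunction class $K$ (evaluating $-v^2-2$ on each sphere) restricts to $L(p,q)$ with reversed orientation. Its restriction can be computed both through Ozsv\'ath--Szab\'o's labeling of $\operatorname{spin}^c$ structures via characteristic covectors on linear plumbings and via the resolution of the singularity, where $K$ is the canonical class of the minimal resolution. That comparison should in fact also settle stage one directly, and the $\sigma$ condition is only needed to make the Fukumoto--Furuta--Ue identification rigorous in the even case.
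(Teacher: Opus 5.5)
Your stage one is essentially the paper's argument. Both rest on $K_C|_{L(p,q)}\cong\mathcal{L}_{p,q}^{\otimes -(q+1)}$, the affine structure of Proposition~\ref{prop:spin^c_affine}, and the fact that the self-conjugate labels are $(q-1)/2$ and $(p+q-1)/2$. Establish that last fact from the Heegaard-diagram symmetry; a symmetry of $d$ alone does not identify the conjugation involution. For odd $p$ this finishes the proof. For even $p$ it leaves exactly the paper's two candidates $\mathfrak{s}_{p,q}(p-1)$ and $\mathfrak{s}_{p,q}(p/2-1)$. There is also no global sign to fix: $c_1(\mathfrak{s}_{p,q}(i))=c_1(\mathfrak{s}_{p,q}(0))+2i\,c_1(\mathcal{L}_{p,q})$, together with vanishing at a self-conjugate label, already forces $(2i+1-q)\,c_1(\mathcal{L}_{p,q})$. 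In any case a check at $p=2$ could not detect a sign, since every $c_1$ vanishes there.

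The genuine gap is in stage two. You try to separate the two candidates by a numerical invariant of the candidates themselves ($d$, or an FFU-type eta/$w$-invariant), and you claim these invariants differ under the hypothesis. Such invariants are unchanged under conjugation, and the two candidates can be conjugate to each other. Put $A=\overline{\mathfrak{s}}_{\mathrm{can}}|_{L(p,q)}$, so $c_1(A)=-(q+1)c_1(\mathcal{L}_{p,q})$. When $q+1\equiv p/2\pmod p$, this class is nonzero of order two. Then $\overline{A}\neq A$ has the same $c_1$, so $\overline{A}$ is the other candidate; it is just $\mathfrak{s}_{\mathrm{can}}|_{L(p,q)}$.

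Concretely, for $(p,q)=(4,1)$ one computes $\sigma(1,4,\epsilon)=1+2\epsilon$, so the hypothesis holds. Yet the candidates $\mathfrak{s}_{4,1}(3)$ and $\mathfrak{s}_{4,1}(1)$ are conjugate and both have $d=0$, so no conjugation-symmetric quantity can choose between them.

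Relatedly, $\sigma(q,p,\pm1)$ are not invariants of the two candidates. The sign $\epsilon$ records the two lifts of the $\mathbb{Z}_p$-action to the spin bundle, i.e.\ the two spin structures. The hypothesis says precisely that those two spin structures have different $d$-invariants (Remark~\ref{rmk:sigma_and_d_invariants}).

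The repair is the paper's route. First pose the ambiguity for the self-conjugate structures: decide whether the cone spin structure with $k\equiv-(q+1)/2$ is $\mathfrak{s}_{p,q}((q-1)/2)$ or $\mathfrak{s}_{p,q}((p+q-1)/2)$. Combining FFU with Ue's results, the swapped matching forces $\sigma(q,p,+1)=\sigma(q,p,-1)$, so the hypothesis rules it out. Only then transport the answer to $A$ by the shift $-\tfrac{q+1}{2}c_1(\mathcal{L}_{p,q})$ of Proposition~\ref{prop:spin^c_affine}; that shift is the step that tells $A$ from $\overline{A}$.

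Your plumbing cross-check does not substitute for this, for two reasons. First, the minimal resolution of $\tfrac1p(1,q)$ is $X(p,q)$. The class on $X(p,p-q)$ evaluating $-v^2-2$ is the canonical class of the resolution of the different singularity $\tfrac1p(1,p-q)$. Second, in the paper the Ozsv\'ath--Szab\'o label of such restricted classes (Proposition~\ref{prop:K_p,q}) is deduced from the present proposition, so it is not independent input.
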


\begin{remark}\label{rmk:sigma_and_d_invariants} For even $p$, we have $\sigma(q,p,+1)=\sigma (q,p,-1)$ if and only if the Heegaard Floer $d$-invariants for the two spin structures on $L(p,q)$ are the same \cite[Proposition 4.8]{Ue-2009}. For example, we have $\sigma(1,2,+1)=1 \neq -1 =\sigma(1,2,-1)$, so Proposition \ref{prop:canonical_spin^c_structure} implies that $\overline{\mathfrak{s}}_{\mathrm{can}}|_{L(2,1)}=\mathfrak{s}_{2,1}(1) \in \operatorname{Spin}^c(L(2,1))$.  
\end{remark}

\begin{proof}[Proof of Proposition \ref{prop:canonical_spin^c_structure}] The $+$-spinor bundle for any $\operatorname{spin}^c$ structure $\mathfrak{s}$ on $C$ is of the form $(\underline{\mathbb{C}}\oplus K_C^{-1})\otimes L$ for some line orbibundle $L$ over $C$. The spinor bundle for the restriction $\mathfrak{s}|_{L(p,q)}$ is the restriction of the $+$-spinor bundle for $\mathfrak{s}$, so it is given by \[
(\underline{\mathbb{C}}\oplus K_C^{-1})|_{L(p,q)}\otimes \mathcal{L}_{p,q}^{\otimes k} \cong \mathcal{L}_{p,q}^{\otimes k}\oplus \mathcal{L}_{p,q}^{\otimes (q+1+k)},
\] 
where $k\in \mathbb{Z}$ is an integer (uniquely determined modulo $p$) satisfying $L|_{L(p,q)}\cong \mathcal{L}_{p,q}^{\otimes k}$. This shows that $\mathfrak{s}|_{L(p,q)}$ comes from a spin structure on $L(p,q)$ if and only if $2k\equiv -(q+1) \pmod p$. Therefore, spin structures on $L(p,q)$ correspond to integers $k$ (modulo $p$) satisfying $2k\equiv -(q+1) \pmod p$. On the other hand, taking $L=K_C$ shows that the $\operatorname{spin}^c$ structure $\mathfrak{s}_{p,q}^K:=\overline{\mathfrak{s}}_{\mathrm{can}}|_{L(p,q)}$ corresponds to $k\equiv -(q+1) \pmod{p}$.

Now consider the case where $p$ is odd. In this case, exactly one of $(q-1)/2$ and $(p+q-1)/2$ is an integer. Also, $L(p,q)$ has a unique spin structure, and the induced $\operatorname{spin}^c$ structure is either $\mathfrak{s}_{p,q}((q-1)/2)$ or $\mathfrak{s}_{p,q}((p+q-1)/2)$, depending on which of $(q-1)/2$ and $(p+q-1)/2$ is an integer. If the spin structure corresponds to $\mathfrak{s}_{p,q}((q-1)/2)$, i.e., if $q$ is odd, then the integer $k$ of the preceding paragraph must satisfy $k\equiv -(q+1)/2\pmod p$, which gives \begin{align*}
    \mathfrak{s}_{p,q}^K-\mathfrak{s}_{p,q}((q-1)/2) & = \left(\mathfrak{s}_{\mathrm{can}}|_{L(p,q)}+ c_1\left(\mathcal{L}_{p,q}^{\otimes -(q+1)}\right)\right) -\left(\mathfrak{s}_{\mathrm{can}}|_{L(p,q)}+ c_1\left( \mathcal{L}_{p,q}^{\otimes -(q+1)/2}\right)\right)   
    \\&= -\frac{q+1}{2}\cdot c_1\left(\mathcal{L}_{p,q}\right).
\end{align*}
Hence $\mathfrak{s}_{p,q}^K=\mathfrak{s}_{p,q}(p-1)$ by Proposition \ref{prop:spin^c_affine}. When the spin structure on $L(p,q)$ corresponds to $\mathfrak{s}_{p,q}((p+q-1)/2)$, i.e., when $q$ is even, we have $k\equiv -(p+q+1)/2\pmod p$, and we get the same result.

Next, assume that $p$ is even (so that $q$ is odd). In this case, $L(p,q)$ has two spin structures, corresponding to $\mathfrak{s}_{p,q}((q-1)/2)$ and $\mathfrak{s}_{p,q}((p+q-1)/2)$. If these correspond to $k\equiv -(q+1)/2\pmod p$ and $k\equiv -(p+q+1)/2\pmod p$, respectively, then we have $\mathfrak{s}_{p,q}^K=\mathfrak{s}_{p,q}(p-1)$ as before. If $\mathfrak{s}_{p,q}((q-1)/2)$ and $\mathfrak{s}_{p,q}((p+q-1)/2)$ correspond to  $k\equiv -(p+q+1)/2 \pmod p$ and $k\equiv -(q+1)/2\pmod p$, respectively, then we can show that $\sigma(q,p,+1)=\sigma(q,p,-1)$ by combining the results of \cite{Fukumoto-Furuta-Ue-2001} and \cite{Ue-2009}.
\end{proof} 

For relatively prime integers $p > q > 0$, if the Hirzebruch-Jung continued fraction of $p/q$ is given by \[ \frac{p}{q} = [a_1, \dots, a_s] = a_1 - \frac{1}{a_2 - \dfrac{1}{\cdots - \dfrac{1}{a_s}}} \quad (a_i \geq 2), \]
then the lens space $L(p,q)$ is the boundary of the plumbed $4$-manifold $X(p,q)$ constructed according to the linear plumbing graph with weights $-a_1,\dots,-a_s$. The manifold $X(p,q)$ is smooth, compact, oriented, simply-connected, and its intersection form $Q_{X(p,q)}$ is negative definite. Let $v_1,\dots,v_s\in H_2(X(p,q);\mathbb{Z})$ be the homology classes of the spheres corresponding to the vertices with weights $-a_1,\dots, -a_s$, respectively. These classes form a basis for $H_2(X(p,q);\mathbb{Z})$, called the \emph{standard basis} for $Q_{X(p,q)}$. Their intersection pairings are given by  \begin{equation}\label{eq:intersection_form_of_X(p,q)}
   v_i\cdot v_j=Q_{X(p,q)}(v_i,v_j)=\begin{cases}
    -a_i & \text{if } i=j, \\
    1 &  \text{if } |i-j|=1, \\
    0 & \text{if }  |i-j|\geq 2.
\end{cases} 
\end{equation}
Let $v_1^*,\dots,v_s^*\in H^2(X(p,q);\mathbb{Z})$ be the dual basis satisfying $\langle v_i^*,v_j\rangle =\delta_{ij}$. We define \begin{equation}\label{eq:def_of_K_p,q}
    K_{p,q}:=2v_s^*+\sum_{i=1}^s (a_i-2)v_i^* \in H^2(X(p,q);\mathbb{Z}).
\end{equation}
The additional term $2v_s^*$ is chosen so that the corresponding $\operatorname{spin}^c$ structure on $X(p,q)$ restricts to $\mathfrak{s}_{p,q}(0)$ on the boundary. 
\begin{proposition}\label{prop:K_p,q}
    The class $K_{p,q}$ is a characteristic covector in $\operatorname{Char}(X(p,q))\subset H^2(X(p,q);\mathbb{Z})$ with \[
     K_{p,q}^2= -s-2\left(1-\frac{1}{p}\right)-12s(q,p)=-b_2(X(p,q))-2\left(1-\frac{1}{p}\right)-12s(q,p),
    \]
    where $s(q,p)$ denotes the Dedekind sum \begin{equation}\label{eq:Dedekind}
            s(q,p)=\frac{1}{4p}\sum_{k=1}^{p-1} \cot \frac{k\pi}{p} \cot \frac{qk\pi}{p}.
    \end{equation}
    Moreover, if either $p$ is odd or $\sigma(q,p,+1)\neq \sigma(q,p,-1)$, then \[
    \mathfrak{s}_{K_{p,q}}|_{L(p,q)}=\mathfrak{s}_{p,q}(0)\in \operatorname{Spin}^c(L(p,q)),
    \]
    where $\mathfrak{s}_{K_{p,q}}\in \operatorname{Spin}^c(X(p,q))$ is the $\operatorname{spin}^c$ structure on $X(p,q)$ with $c_1(\mathfrak{s}_{K_{p,q}})=K_{p,q}$.
\end{proposition}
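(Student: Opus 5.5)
The proof splits into two parts: an arithmetic part (characteristic covector and the value of $K_{p,q}^2$) and a geometric part (the boundary restriction). For the first part, set $K_{\mathrm{res}}:=\sum_{i=1}^s (a_i-2)v_i^*$, so that $K_{p,q}=K_{\mathrm{res}}+2v_s^*$. By the adjunction formula, $K_{\mathrm{res}}$ is the canonical class of $X(p,q)$, viewed as the minimal resolution of $\tfrac1p(1,q)$.

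\emph{Characteristic covector.} We have $\langle K_{p,q},v_i\rangle=a_i-2+2\delta_{is}\equiv -a_i=v_i^2 \pmod 2$ on the standard basis. Since both sides of the characteristic condition are additive mod $2$ on $H_2$, this proves $K_{p,q}\in\operatorname{Char}(X(p,q))$.

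\emph{Square.} I would expand
\[
K_{p,q}^2=K_{\mathrm{res}}^2+4K_{\mathrm{res}}\cdot v_s^*+4(v_s^*)^2
\]
and compute the whole quantity $F(p,q):=K_{p,q}^2+s+2(1-\tfrac1p)+12s(q,p)$ by induction on the length $s$ of the continued fraction. For $s=1$ we have $p=a_1$, $q=1$. Then $K_{p,1}=a_1v_1^*$ and $v_1^*=-v_1/a_1$, so $K_{p,1}^2=-a_1$. On the other side, $12s(1,p)=(p-1)(p-2)/p$, and $-1-2(1-\tfrac1p)-(p-1)(p-2)/p=-p$, as required.

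For the inductive step, write $p/q=[a_1,\dots,a_s]$ and $q/r=[a_2,\dots,a_s]$, so that $p=a_1q-r$. Remove $v_1$ and express the dual vectors $v_i^*$ of the chain through those of the shorter chain. The entries of $Q^{-1}$ for a linear chain are explicit ratios of continued-fraction numerators; for instance $(v_s^*)^2=-q'/p$ with $qq'\equiv1 \pmod p$. Feeding these into the expansion gives a recursion for $K_{p,q}^2$ in terms of $K_{q,r}^2$. It then suffices to check that $12s(q,p)-12s(r,q)$ satisfies the matching recursion. This follows from Dedekind reciprocity
\[
12s(q,p)+12s(p,q)=-3+\frac pq+\frac qp+\frac1{pq}
\]
together with $s(p,q)=s(-r,q)=-s(r,q)$. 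This part is routine bookkeeping.

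\emph{Boundary restriction.} With the paper's orientation conventions, $X(p,q)$ is the minimal resolution $\widetilde C$ of the cone $C=CL(p,q)$, and $\widetilde C$ minus the exceptional curves is biholomorphic to $C-\{0\}$. The $\operatorname{spin}^c$ structure with $c_1=K_{\mathrm{res}}$ is the conjugate canonical structure of $\widetilde C$, whose determinant line bundle is the canonical bundle of $\widetilde C$. It therefore agrees with $\overline{\mathfrak s}_{\mathrm{can}}$ on $C-\{0\}$ (using Remark~\ref{rmk:orbifold_spin^c}). By Proposition~\ref{prop:canonical_spin^c_structure}, under the hypothesis on $p$ or on $\sigma$, its restriction to $L(p,q)$ is $\mathfrak s_{p,q}(p-1)$.

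Since $H_1(X(p,q))=0$, \eqref{eq:spin^c-char_assignment} shows that $\mathfrak s_{K_{p,q}}=\mathfrak s_{K_{\mathrm{res}}}+v_s^*$. Hence, by Proposition~\ref{prop:spin^c_affine}, the claim $\mathfrak s_{K_{p,q}}|_{L(p,q)}=\mathfrak s_{p,q}(0)$ reduces to showing
\[
v_s^*|_{L(p,q)}=c_1(\mathcal L_{p,q}).
\]

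\emph{Main obstacle.} Identifying $v_s^*|_{L(p,q)}$ with $c_1(\mathcal L_{p,q})$, with the correct sign and generator, is where I expect the difficulty. The class $v_s^*$ restricts to a generator of $H^2(L(p,q))\cong\mathbb Z_p$, but one must rule out the other generators, in particular $c_1(\mathcal L_{p,q})^{-1}$ and $c_1(\mathcal L_{p,q})^{q}$.

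My first attempt would use the coordinate function $z_1$. Since it transforms by $\zeta$, it descends to a section of the orbibundle extending $\mathcal L_{p,q}^{\pm1}$ over $C$; the sign must be checked against the convention in \eqref{eq:line_bundle_L_rho}. Pull this section back to $\widetilde C$ and twist by a rational combination of exceptional curves so that it becomes a genuine line bundle. Its zero divisor is then the strict transform of the axis $\{z_1=0\}$ plus exceptional components. In the toric picture, this strict transform meets exactly one end curve transversally once, and I expect that curve to be $E_s$ under the ordering $p/q=[a_1,\dots,a_s]$. That would show the line bundle has Chern class $v_s^*$ modulo the image of $H^2(X,\partial X)$, hence restricts to $\pm c_1(\mathcal L_{p,q})$. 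Finally, I would pin down the sign, and confirm which end curve is met, by testing the case $(p,q)=(2,1)$. There Remark~\ref{rmk:sigma_and_d_invariants} and the explicit identification $\mathfrak s_{2,1}(0)\ne\mathfrak s_{2,1}(1)$ leave no ambiguity.
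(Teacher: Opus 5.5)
The gap is in the last step of the boundary statement: the identity $v_s^*|_{L(p,q)}=c_1(\mathcal{L}_{p,q})$. The paper takes this, like $\mathfrak{s}_{K_0}|_{L(p,q)}=\overline{\mathfrak{s}}_{\mathrm{can}}|_{L(p,q)}$, from Ue. You leave both the sign and the end of the chain as expectations, and your proposed test detects neither. For $(p,q)=(2,1)$ we have $H^2(L(2,1);\mathbb{Z})\cong\mathbb{Z}_2$, so $c_1(\mathcal{L}_{2,1})=-c_1(\mathcal{L}_{2,1})$, and $s=1$, so there is only one curve. Every candidate answer passes this test. Neither ambiguity is harmless:
\begin{itemize}
\item a wrong sign would give $\mathfrak{s}_{p,q}(p-2)$;
\item confusing the ends would give $\mathfrak{s}_{p,q}(q-1)$, since $z_2$ is a section of $\mathcal{L}_{p,q}^{\otimes q}$ whose strict transform meets $E_1$;
\item the correct answer is $\mathfrak{s}_{p,q}(0)$.
\end{itemize}

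Both points follow from your own setup by direct checks, with no test case.

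\emph{Sign.} Since $z_1(\zeta z_1,\zeta^q z_2)=\zeta z_1$ matches the relation $(z_1,z_2,v)\sim(\zeta z_1,\zeta^q z_2,\zeta v)$ defining $\mathcal{L}_{p,q}$, the function $z_1$ is a section of $\mathcal{L}_{p,q}$ itself on $C-\{0\}$. It vanishes to first order exactly along $D_1=\{z_1=0\}/\mathbb{Z}_p$, so $\mathcal{L}_{p,q}\cong\mathcal{O}(D_1)$ there. Hence $\mathcal{O}_{\widetilde C}(\widetilde D_1)$ is an honest extension, with no rational twisting needed, and $\langle c_1,E_i\rangle=\widetilde D_1\cdot E_i$.

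\emph{End.} Work in $N=\mathbb{Z}^2+\mathbb{Z}\cdot\tfrac1p(1,q)$. The resolution rays are $w_0=(0,1)$, $w_1=\tfrac1p(1,q)$ and $w_{i+1}=a_iw_i-w_{i-1}$, so $pw_i=(D_{i-1},r_i)$. Here the $D_{i-1}$ are the continued-fraction numerators, ending with $D_s=p$. The $r_i$ are the successive remainders $p,q,r,\dots$ of the Hirzebruch--Jung algorithm for $p/q$, ending with $0$. Thus $w_{s+1}=(1,0)$, which is the ray of $\{z_1=0\}$ and is adjacent only to the ray $w_s$ of $E_s$. Therefore $c_1(\mathcal{O}(\widetilde D_1))=v_s^*$ exactly, and $v_s^*|_{L(p,q)}=c_1(\mathcal{L}_{p,q})$.

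The rest of your proposal is correct. Identifying $\mathfrak{s}_{K_{\mathrm{res}}}$ with the conjugate canonical structure of the resolution is a valid, self-contained replacement for the other citation.

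Your computation of $K_{p,q}^2$ takes a different route from the paper. The paper quotes $K_0^2=-s+2(1-\frac1p)-12s(q,p)$ from N\'emethi--Nicolaescu and adds the Claim $(v_s^*)^2+K_0\cdot v_s^*=-(1-\frac1p)$. Your induction avoids that citation and does close. Let $Q'$ and $\hat v_2^{*}$ be the intersection matrix and the dual of $v_2$ for the chain $[a_2,\dots,a_s]$. Then $(Q'^{-1})_{11}=-r/q$ and $K_{q,r}\cdot\hat v_2^{*}=-(q-r+1)/q$. A Schur complement along $v_1$ then gives $K_{p,q}^2=K_{q,r}^2-(p-q+1)^2/(pq)$. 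This is exactly what Dedekind reciprocity together with $s(p,q)=-s(r,q)$ requires.
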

\begin{proof}
    Let $K_0:=\sum_{i=1}^s (a_i-2)v_i^*$ so that $K_{p,q}=K_0+2v_s^*$. To show that $K_{p,q}\in \operatorname{Char}(X(p,q))$, it suffices to show that $K_0\in \operatorname{Char}(X(p,q))$. Indeed, for any homology class $v=\sum_{i=1}^s b_iv_i\in H_2(X(p,q);\mathbb{Z})$, we have \begin{align*}
        \langle K_0,v\rangle = \sum_{i=1}^s (a_i-2)b_i \equiv \sum_{i=1}^s a_ib_i \equiv \sum_{i=1}^s -a_ib_i^2 =\sum_{i=1}^s b_i^2 v_i^2 \equiv \left( \sum_{i=1}^s b_iv_i\right)^2 =v^2 \pmod{2}.
    \end{align*}

    Next, let us compute $K_{p,q}^2$. First, we have \begin{equation}\label{eq:K_0^2}
         K_0^2=-s+2\left(1-\frac{1}{p}\right)-12s(q,p);
    \end{equation}
    see \cite[p.304, 7.1]{Nemethi-Nicolaescu-2002}. 

    \begin{claim}\label{claim}
        We have $(v_s^*)^2+K_0\cdot v_s^*=-\left(1-\dfrac{1}{p}\right)$. 
    \end{claim}

    The proof of Claim \ref{claim} will be given below. Combining \eqref{eq:K_0^2} with Claim \ref{claim} gives \begin{align*}
        K_{p,q}^2 &= (K_0+2v_s^*)^2= K_0^2+4K_0\cdot v_s^*+4(v_s^*)^2 =-s-2\left(1-\frac{1}{p}\right)-12s(q,p).
    \end{align*}

    Finally, let us prove the last statement. Let $\mathfrak{s}_{K_0}, \mathfrak{s}_{K_{p,q}} \in \operatorname{Spin}^c(X(p,q))$ denote the $\operatorname{spin}^c$ structures on $X(p,q)$ whose first Chern classes are $K_0$, $K_{p,q}$, respectively. Since $c_1(\mathfrak{s}_{K_{p,q}})=c_1(\mathfrak{s}_{K_0})+2v_s^*$, we must have $\mathfrak{s}_{K_{p,q}}=\mathfrak{s}_{K_0}+v_s^*$; see \eqref{eq:spin^c-char_assignment}. On the other hand, it is proved in \cite[Section 5]{Ue-2009} that the restriction $\mathfrak{s}_{K_0}|_{L(p,q)}$ is precisely $\overline{\mathfrak{s}}_{\mathrm{can}}|_{L(p,q)}$, and that the image of $v_s^*$ under the map $H^2(X(p,q);\mathbb{Z})\to H^2(L(p,q);\mathbb{Z})$ is $c_1(\mathcal{L}_{p,q})$. It follows from Propositions \ref{prop:spin^c_affine} and \ref{prop:canonical_spin^c_structure}  that \[
    \mathfrak{s}_{K_{p,q}}|_{L(p,q)}=\left( \mathfrak{s}_{K_0}+v_s^*\right)|_{L(p,q)}=\mathfrak{s}_{K_0}|_{L(p,q)}+v_s^*|_{L(p,q)}=\mathfrak{s}_{p,q}(p-1)+c_1(\mathcal{L}_{p,q})=\mathfrak{s}_{p,q}(0).
    \]
\end{proof}

\begin{proof}[Proof of Claim \ref{claim}]
   Write $v_s^*$ as a $\mathbb{Q}$-linear combination of $v_1,\dots,v_s$ as $v_s^*=\sum_{i=1}^s x_iv_i$; recall our convention \eqref{eq:second_cohomology_as_dual_lattice} that $H^2(X(p,q);\mathbb{Z})\subset H_2(X(p,q);\mathbb{Z})\otimes \mathbb{Q}$. For each $i=1,\dots,s$, let $D_i>0$ be the numerator of the continued fraction $[a_1,\dots,a_i]$, and let $D_0=1$ and $D_{-1}=0$. Then it is easy to see that \[
   D_i=a_iD_{i-1}-D_{i-2} \quad (i=1,\dots,s).
   \]
   Let us show that \begin{equation}\label{eq:claim}
       x_i=D_{i-1}x_1 \quad (i=0,1,\dots,s),
   \end{equation}
   where we put $x_0=0$. We proceed by induction. The cases $i=0$ and $i=1$ are obvious, so assume $1<i\leq s$. By \eqref{eq:intersection_form_of_X(p,q)}, we have \[
   0=v_s^*\cdot v_{i-1}=x_{i-2}-a_{i-1}x_{i-1}+x_i.
   \]
   Thus, the induction hypothesis implies \[
   x_i=a_{i-1}x_{i-1}-x_{i-2}=(a_{i-1}D_{i-2}-D_{i-3})x_1= D_{i-1}x_1,
   \]
   proving \eqref{eq:claim}. Next, from \[
   1=v_s^*\cdot v_s=x_{s-1}-a_sx_s=(D_{s-2}-a_sD_{s-1})x_1=-D_sx_1=-px_1,
   \]
   we get $x_1=-1/p$, and therefore \[
   v_s^*\cdot v_s^*= x_s=D_{s-1}x_1=-\frac{D_{s-1}}{p}. 
   \]
   Also, we have \begin{align*}
       K_0\cdot v_s^*& = \sum_{i=1}^s (a_i-2)x_i =-\sum_{i=1}^s (a_i-2) \frac{D_{i-1}}{p}=-\frac{1}{p}\sum_{i=1}^s (a_iD_{i-1}-2D_{i-1}) \\
       &= -\frac{1}{p}\sum_{i=1}^s (D_i+D_{i-2}-2D_{i-1}) = -\frac{1}{p}(D_s-D_{s-1}-1)=-\frac{p-D_{s-1}-1}{p}.
   \end{align*}
   Now the result follows immediately.   
\end{proof}

\begin{remark}\label{rmk:K_p,q_in_terms_of_v_i} We can write the class $K_{p,q}$ as a $\mathbb{Q}$-linear combination $K_{p,q} = \sum_{i=1}^s y_i(p,q) v_i$ of the standard basis vectors $v_1,\dots,v_s$, where the coefficients are given by: \begin{equation}\label{eq:y_i}
    \begin{pmatrix}
    y_1(p,q) \\ y_2(p,q) \\ y_3(p,q) \\ \vdots \\ y_{s-1}(p,q) \\ y_s(p,q) 
\end{pmatrix} = \begin{pmatrix}
    -a_1 & 1 & 0 & \cdots & 0 & 0  \\
    1 & -a_2 & 1 & \cdots & 0 & 0 \\
    0 & 1 & -a_3 & \cdots & 0 &0 \\
    \vdots & \vdots & \vdots & \ddots  & \vdots & \vdots  \\
    0 & 0 &0 & \cdots & -a_{s-1} & 1 \\
    0 & 0 & 0 & \cdots & 1 & -a_s
\end{pmatrix}^{-1}\begin{pmatrix}
    a_1-2 \\ a_2-2 \\ a_3-2 \\ 
    \vdots \\ a_{s-1}-2 \\ a_s
\end{pmatrix}.
\end{equation} 
\end{remark}

\vspace{2mm}
Finally, we analyze the behavior of $\operatorname{spin}^c$ structures on lens spaces under orientation-reversing diffeomorphisms. 
\begin{proposition}\label{prop:orientation-reversing}
    For relatively prime integers $p>q>0$, consider the orientation-reversing diffeomorphism $f=f_{p,q}\colon L(p,q)\to L(p,p-q)$ defined by $f[z_1,z_2]=[\bar{z}_1,z_2]$, where $\bar{z}_1$ denotes the complex conjugate of $z_1$. If either $p$ is odd or $\sigma(q,p,+1)\neq \sigma(q,p,-1)$, then \[
    f^*\mathfrak{s}_{p,p-q}(i)=\mathfrak{s}_{p,q}(p-1-i)\in \operatorname{Spin}^c(L(p,q))
    \]
    for all $i\in \mathbb{Z}_p$. 
\end{proposition}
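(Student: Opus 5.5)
Since $f^*$ is induced by a diffeomorphism, the map $\mathfrak{s}\mapsto f^*\mathfrak{s}$ is affine over $f^*\colon H^2(L(p,p-q);\mathbb{Z})\to H^2(L(p,q);\mathbb{Z})$. The plan is therefore to (1) compute the linear part, i.e.\ $f^*c_1(\mathcal{L}_{p,p-q})$ in terms of $c_1(\mathcal{L}_{p,q})$, and (2) pin down $f^*$ of a single $\operatorname{spin}^c$ structure. Proposition \ref{prop:spin^c_affine} then gives the formula for all $i$.

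\emph{Linear part.} Pull back the bundle $\mathcal{L}_{p,p-q}$ along $f$. The lift $\tilde f(z_1,z_2)=(\bar z_1,z_2)$ on $S^3$ intertwines the generator $(z_1,z_2)\mapsto(\zeta z_1,\zeta^q z_2)$ of the action defining $L(p,q)$ with $(w_1,w_2)\mapsto(\zeta^{-1}w_1,\zeta^{q}w_2)$. This equals $(\zeta' w_1,\zeta'^{\,p-q}w_2)$ with $\zeta'=\zeta^{-1}$, which is the defining generator for $L(p,p-q)$ with respect to the primitive root $\zeta^{-1}$. Under this generator, the fiber coordinate of $\mathcal{L}_{p,p-q}$ (defined using $\zeta$) transforms by $\zeta^{-1}\cdot$ relative to $\zeta'$. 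Consequently $f^*\mathcal{L}_{p,p-q}\cong \mathcal{L}_{p,q}^{\otimes -1}$, and so $f^*c_1(\mathcal{L}_{p,p-q})=-c_1(\mathcal{L}_{p,q})$. One must check carefully which root is used in the quotient; getting this wrong would flip the sign. The minus sign is consistent with the claimed formula $i\mapsto p-1-i$.

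\emph{Anchor.} With the linear part in hand, $f^*\mathfrak{s}_{p,p-q}(i)=f^*\mathfrak{s}_{p,p-q}(0)-i\,c_1(\mathcal{L}_{p,q})$. It thus suffices to show $f^*\mathfrak{s}_{p,p-q}(0)=\mathfrak{s}_{p,q}(p-1)$, or equivalently $f^*\mathfrak{s}_{p,p-q}(p-1)=\mathfrak{s}_{p,q}(0)$. I would anchor using conjugation together with the canonical structures. By Proposition \ref{prop:canonical_spin^c_structure} (which applies to both $(p,q)$ and $(p,p-q)$ under the hypothesis; for even $p$ one should check the $\sigma$ hypothesis is symmetric under $q\mapsto p-q$, which follows from the $d$-invariant interpretation in Remark \ref{rmk:sigma_and_d_invariants}, since $d$ changes sign under orientation reversal), we have $\overline{\mathfrak{s}}_{\mathrm{can}}|_{L(p,q)}=\mathfrak{s}_{p,q}(p-1)$. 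Spin structures are the fixed points of conjugation, and the affine description identifies them as $\mathfrak{s}_{p,q}((q-1)/2)$ or $\mathfrak{s}_{p,q}((p+q-1)/2)$. Since $f^*$ commutes with conjugation, it maps spin structures to spin structures.

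\emph{Odd $p$.} For $p$ odd the spin structure is unique, so $f^*$ carries the spin structure $\mathfrak{s}_{p,p-q}(j_0)$ to $\mathfrak{s}_{p,q}(i_0)$. Combining this with the linear part determines everything. It then remains to check arithmetically that $i_0\equiv p-1-j_0 \pmod p$: we have $i_0\equiv (q-1)/2$ and $j_0\equiv(p-q-1)/2$ mod $p$ (as halves mod odd $p$), and $i_0+j_0\equiv (p-2)/2\equiv -1$ mod $p$, as required.

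\emph{Even $p$, the main obstacle.} For $p$ even there are two spin structures, and one must show $f^*$ matches $\mathfrak{s}_{p,p-q}((p-q-1)/2)$ with $\mathfrak{s}_{p,q}(p-1-(p-q-1)/2)=\mathfrak{s}_{p,q}((p+q-1)/2)$ rather than with the other one. Here I would use $d$-invariants: $f$ is orientation-reversing, so $d(L(p,q),f^*\mathfrak{t})=-d(L(p,p-q),\mathfrak{t})$. The hypothesis $\sigma(q,p,+1)\ne\sigma(q,p,-1)$ means the two spin structures have distinct $d$-invariants (Remark \ref{rmk:sigma_and_d_invariants}), so they can be distinguished. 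Using the Ozsv\'ath--Szab\'o recursive formula for $d(L(p,q),i)$ (with attention to their orientation convention), one verifies $d(L(p,q),p-1-i)=-d(L(p,p-q),i)$ for the relevant $i$. This forces the correct matching, and the affine extension completes the proof. This $d$-invariant bookkeeping, including orientation conventions, is the step I expect to be most delicate.
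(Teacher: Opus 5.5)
Your proposal is correct and follows essentially the same route as the paper. You compute $f^*\mathcal{L}_{p,p-q}\cong\mathcal{L}_{p,q}^{-1}$, anchor at the unique spin structure for odd $p$, and for even $p$ use $d(L(p,q),f^*\mathfrak{t})=-d(L(p,p-q),\mathfrak{t})$ with the lens-space $d$-invariant formula (the paper cites Ue's closed formula) to match $\mathfrak{s}_{p,p-q}((p-q-1)/2)$ with $\mathfrak{s}_{p,q}((p+q-1)/2)$. Your uniform odd-$p$ computation with $2^{-1}$ mod $p$ neatly replaces the paper's split by the parity of $q$, and your detour through Proposition \ref{prop:canonical_spin^c_structure} and $\overline{\mathfrak{s}}_{\mathrm{can}}$ is never used and can be dropped.
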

\begin{proof}
Recall the complex line bundle \[
\mathcal{L}_{p,p-q}= \frac{S^3\times \mathbb{C}}{(z_1,z_2,v)\sim (\zeta z_1,\zeta^{-q}z_2,\zeta v)}
\]
over $L(p,p-q)$ defined in \eqref{eq:line_bundle_L_rho}, where $\zeta=e^{2\pi i/p}$. Regarding the pullback bundle $f^*\mathcal{L}_{p,p-q}$ as a subspace of $L(p,q)\times \mathcal{L}_{p,p-q}$, the map \begin{align*}
    f^*\mathcal{L}_{p,p-q} & \longrightarrow \frac{S^3\times \mathbb{C}}{(z_1,z_2,v)\sim (\zeta z_1,\zeta^q z_2,\zeta^{-1}v)}, \\
    ([z_1,z_2],[\bar{z}_1,z_2,v]) & \longmapsto [z_1,z_2,v],
\end{align*}
is a well-defined bundle isomorphism over $L(p,q)$, which shows that $f^*\mathcal{L}_{p,p-q}\cong \mathcal{L}_{p,q}^{-1}$. 

Next, we consider the case where $p$ is odd. If $q$ is even, then $\mathfrak{s}_{p,p-q}((p-q-1)/2)$ and $\mathfrak{s}_{p,q}((p+q-1)/2)$ are the unique spin structures of $L(p,p-q)$ and $L(p,q)$, respectively, so we must have $f^*\mathfrak{s}_{p,p-q}((p-q-1)/2)=\mathfrak{s}_{p,q}((p+q-1)/2)$. Using Proposition \ref{prop:spin^c_affine}, it follows that for any $i\in \mathbb{Z}_p$, \begin{align*}
    f^*\mathfrak{s}_{p,p-q}(i) & = f^*\left(\mathfrak{s}_{p,p-q}((p-q-1)/2)+\left(i-\tfrac{p-q-1}{2}\right) \cdot c_1(\mathcal{L}_{p,p-q})  \right) \\
    & = f^*\mathfrak{s}_{p,p-q}((p-q-1)/2) +\left(i-\tfrac{p-q-1}{2}\right) \cdot c_1(f^*\mathcal{L}_{p,p-q}) \\
    &= \mathfrak{s}_{p,q}((p+q-1)/2) -\left(i-\tfrac{p-q-1}{2}\right) \cdot c_1(\mathcal{L}_{p,q}) = \mathfrak{s}_{p,q}(p-1-i). \end{align*}
If $q$ is odd, then $\mathfrak{s}_{p,p-q}(p-(q+1)/2)$ and $\mathfrak{s}_{p,q}((q-1)/2)$  are the unique spin structures of $L(p,p-q)$ and $L(p,q)$, respectively, and the result can be obtained similarly. 

Now suppose $p$ is even. Then $\mathfrak{s}_{p,q}((q-1)/2)$ and $\mathfrak{s}_{p,q}((p+q-1)/2)$ are the spin structures of $L(p,q)$, and $\mathfrak{s}_{p,p-q}(p-(q+1)/2)$ and $\mathfrak{s}_{p,p-q}((p-q-1)/2)$ are the spin structures of $L(p,p-q)$. In particular, $f^*\mathfrak{s}_{p,p-q}((p-q-1)/2)$ is either $\mathfrak{s}_{p,q}((q-1)/2)$ or $\mathfrak{s}_{p,q}((p+q-1)/2)$. Assuming $\sigma(q,p,+1)\neq \sigma (q,p,-1)$, the $d$-invariants for $\mathfrak{s}_{p,q}((q-1)/2)$ and $\mathfrak{s}_{p,q}((p+q-1)/2)$ are distinct, and therefore we have \begin{align*}
    d(L(p,q),f^*\mathfrak{s}_{p,p-q}((p-q-1)/2))&=-d(L(p,p-q), \mathfrak{s}_{p,p-q}((p-q-1)/2)) \\
    & =d(L(p,q),\mathfrak{s}_{p,q}((p+q-1)/2))  \\
    & \neq d(L(p,q),\mathfrak{s}_{p,q}((q-1)/2)),
\end{align*}
where the second equality follows from a closed formula for $d$-invariants of lens spaces \cite[p.136]{Ue-2009}. It follows that \[
f^*\mathfrak{s}_{p,p-q}((p-q-1)/2)=\mathfrak{s}_{p,q}((p+q-1)/2),
\]
and now we may obtain the result as above.
\end{proof}

\begin{remark}
If we consider the orientation-reversing diffeomorphism $g \colon L(p,q)\to L(p,p-q)$ defined by $g[z_1,z_2]=[z_1,\bar{z}_2]$, then we have $g^*\mathcal{L}_{p,p-q} \cong \mathcal{L}_{p,q}$. A similar argument as above shows that 
\[
g^* \mathfrak{s}_{p,p-q}(i)=\mathfrak{s}_{p,q}(i+q)\in \operatorname{Spin}^c(L(p,q))
\]
under the same assumptions as in Proposition \ref{prop:orientation-reversing}.     
\end{remark}

\section{Rational homology projective planes with quotient singularities}\label{sec:orbifold_BMY}

A normal projective complex surface $S$ whose Betti numbers $b_i(S)$ agree with those of the complex projective plane $\mathbb{CP}^2$ is called a \emph{rational homology projective plane}, or simply a \emph{$\mathbb{Q}$-homology $\mathbb{CP}^2$}. A (necessarily isolated) singular point of $S$ is called a \emph{cyclic quotient singularity} if its germ is analytically isomorphic to $(\mathbb{C}^2/G,0)$, where $G$ is a finite cyclic subgroup of $\operatorname{GL}(2,\mathbb{C})$ acting freely on $\mathbb{C}^2-\{0\}$. In this case, there exist relatively prime positive integers $p>q>0$ such that $G$ is conjugate to the cyclic group $C_{p,q}\subset \operatorname{GL}(2,\mathbb{C})$ of order $p$, generated by the matrix \[
\begin{pmatrix}
    e^{2\pi i /p} & 0 \\ 0 & e^{2\pi i q/p}
\end{pmatrix}.
\]
We then say that the cyclic singularity is of type $\tfrac{1}{p}(1,q)$. The integer $p$ is uniquely determined by the singularity, and $q$ is uniquely determined up to taking its multiplicative inverse modulo $p$. Observe that the link of a cyclic singularity of type $\tfrac{1}{p}(1,q)$ is the lens space $L(p,q)$, and the singularity has a neighborhood analytically isomorphic to the cone $CL(p,q)=D^4/C_{p,q}$, which was discussed in Section \ref{sec:lens_spinc}.

Suppose $S$ is a $\mathbb{Q}$-homology $\mathbb{CP}^2$, whose singularities are all cyclic. We write $S^0$ for the \emph{smooth locus} of $S$, that is, the complement of its singular points. Let $X_S$ denote the smooth, compact $4$-manifold obtained from $S$ by removing cone neighborhoods of the singular points. We will call $X_S$ the \emph{associated $4$-manifold} of $S$. We give $X_S$ the orientation induced from the canonical orientation of $S$. We summarize the topological properties of the $4$-manifold $X_S$ in the following lemma. Proofs can be found in \cite{Jo-Park-Park-2024-02,Jo-Park-Park-2025-01}.
\begin{lemma}\label{lem:topology_of_QHCP2}
    Let $S$ be a rational homology projective plane with exactly $n$ cyclic quotient singularities of types \[
    \tfrac{1}{p_1}(1,q_1),\dots,\tfrac{1}{p_n}(1,q_n).
    \]
    Then the associated $4$-manifold $X_S$ is homotopy equivalent to $S^0$, which has a positive definite intersection form and $b_2(X_S)=1$. Its oriented boundary is given by \[
    \partial X_S=-L(p_1,q_1)\amalg
     \cdots \amalg -L(p_n,q_n),
    \]
    where $-L(p,q)$ denotes the orientation reversal of $L(p,q)$. Furthermore, if $H_1(S^0;\mathbb{Z})=0$, then the $p_i$'s are pairwise relatively prime, and the intersection form $Q_{X_S}$ of $X_S$ is isomorphic to the lattice $\langle p_1\dots p_n \rangle$.   
\end{lemma}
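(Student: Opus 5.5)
The plan is to prove the four assertions in turn, handling the topology with standard long exact sequences and using one piece of algebraic geometry, an ample divisor, for definiteness.

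\emph{Homotopy type and boundary.} Each singular point has a neighborhood isomorphic to the cone $CL(p_i,q_i)$, which is a closed cone over its link. Removing the open cones leaves $X_S$, a compact manifold with boundary. $S^0$ is $X_S$ with an open collar $L(p_i,q_i)\times(0,1)$ attached along each boundary component, so $S^0$ deformation retracts onto $X_S$. For the boundary, give each cone neighborhood the complex orientation. Its boundary, with the boundary orientation, is the link $L(p_i,q_i)$ with the orientation fixed in Section \ref{sec:lens_spinc}, since that orientation is exactly the one induced as the boundary of the complex cone. The complement $X_S$ induces the opposite orientation on the common boundary, so $\partial X_S=\coprod_i -L(p_i,q_i)$.

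\emph{Betti number and definiteness.} By excision, $H_*(S,X_S;\mathbb{Q})\cong\bigoplus_i H_*(CL(p_i,q_i),L(p_i,q_i);\mathbb{Q})$. Each cone is contractible, so the $i$-th summand is $\tilde H_{*-1}(L(p_i,q_i);\mathbb{Q})$, which is nonzero only in degree $4$. Feeding this into the long exact sequence of the pair $(S,X_S)$ gives $H_2(X_S;\mathbb{Q})\cong H_2(S;\mathbb{Q})\cong\mathbb{Q}$, so $b_2(X_S)=1$.

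For positivity, take an ample divisor $H$ on $S$. Because $S$ is $\mathbb{Q}$-factorial (it has quotient singularities), some multiple $mH$ is Cartier and very ample. A general member of $|mH|$ avoids the finitely many singular points, so it is a smooth curve $C$ lying in $S^0\simeq X_S$. Its class satisfies $[C]^2=(mH)^2>0$. Since $b_2=1$, the intersection form on $X_S$ is therefore positive definite.

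\emph{The case $H_1(S^0;\mathbb{Z})=0$.} Write $X=X_S$ and $Y=\partial X$. By Lefschetz duality and universal coefficients, $H_1(X,Y)$ is identified with $\ker\bigl(H_0(Y)\to H_0(X)\bigr)\cong\mathbb{Z}^{n-1}$, which is free. Hence $H_2(X)\cong H^2(X,Y)$ is torsion-free and so equals $\mathbb{Z}$, generated by some $v$; likewise $H_2(X,Y)\cong H^2(X)\cong\operatorname{Hom}(H_2(X),\mathbb{Z})\cong\mathbb{Z}$. Since lens spaces have $H_2=0$, the exact sequence of the pair reads
\[
0\to H_2(X)\to H_2(X,Y)\to H_1(Y)\to H_1(X)=0 .
\]
Under these identifications the first map is $v\mapsto Q_X(v,\cdot)$, which is multiplication by $v^2$. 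Therefore $\bigoplus_i\mathbb{Z}_{p_i}\cong H_1(Y)$ is cyclic of order $v^2$. This forces the $p_i$ to be pairwise relatively prime, and then $v^2=p_1\cdots p_n$, so $Q_{X_S}\cong\langle p_1\cdots p_n\rangle$.

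\emph{Expected difficulties.} The only delicate points are the torsion-freeness of $H_2(X)$ and the identification of the connecting map with the intersection form; both are handled by the duality argument above. The orientation convention on the boundary must be checked carefully, but it is the standard one for links of singularities.
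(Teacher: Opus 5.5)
Your proof is correct. The paper gives no proof of this lemma and simply cites \cite{Jo-Park-Park-2024-02,Jo-Park-Park-2025-01}. Your argument is a complete, self-contained proof by standard means:
\begin{itemize}
\item rational excision gives $b_2(X_S)=1$;
\item a smooth curve from an ample linear system, lying in $S^0$, gives definiteness;
\item the exact sequence $0\to H_2(X)\to H_2(X,\partial X)\to H_1(\partial X)\to 0$, presented by the intersection form, forces $\bigoplus_i\mathbb{Z}_{p_i}$ to be cyclic of order $v^2$.
\end{itemize}

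Some wording could be tightened, though none of it is a gap.
\begin{itemize}
\item The identification $H_1(X,Y)\cong\ker\bigl(H_0(Y)\to H_0(X)\bigr)$ follows from the exact sequence of the pair together with $H_1(X)=0$, not from Lefschetz duality. Duality and universal coefficients are needed only in the next step, to deduce that $H_2(X)\cong H^2(X,Y)$ is torsion-free.
\item Smoothness of the general member $C$ comes from Bertini's theorem, not from $C$ avoiding the singular points.
\item $\mathbb{Q}$-factoriality is unnecessary, since $S$ is projective and a hyperplane section is already very ample.
\item The equality $[C]^2=(mH)^2$ is best justified by noting that the topological self-intersection of the embedded surface $C\subset S^0$ is the degree of its normal bundle $\mathcal{O}_S(C)|_C$. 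This avoids any question of whether two members of the linear system are homologous inside $S^0$.
\end{itemize}
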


Let $S$ be a $\mathbb{Q}$-homology $\mathbb{CP}^2$ with exactly $n$ cyclic quotient singularities of types \[
    \tfrac{1}{p_1}(1,q_1),\dots,\tfrac{1}{p_n}(1,q_n).
    \]
We denote by $K_S$ the canonical divisor of the surface $S$. By abuse of notation, we will denote both the canonical line orbibundle over $S$, and its first orbifold Chern class in $H^2(S;\mathbb{Q})$ also by $K_S$. Expand $p_i/q_i$ ($i=1,\dots,n$) into its Hirzebruch-Jung continued fraction: \[
\frac{p_i}{q_i}=\left[b_{i,1},\dots,b_{i,\ell_i}\right]=b_{i,1}-\frac{1}{b_{i,2}-\dfrac{1}{\cdots -\dfrac{1}{b_{i,\ell_i}}}}, \quad b_{i,j}\geq 2.
\]
An application of the adjunction formula shows that \begin{equation}\label{eq:K_square}
    K_S^2=9-3L-2n+\sum_{i=1}^n \left(\frac{q_i+q_i'+2}{p_i}+\sum_{j=1}^{\ell_i} b_{i,j} \right),
\end{equation}
where $L=\sum_{i=1}^n \ell_i$, and $q_i'$ is the unique integer such that $0<q_i'<p_i$ and $q_iq_i'\equiv 1\pmod {p_i}$ \cite[Section 3]{Hwang-Keum-2011-2}. On the other hand, it is known that the Dedekind sum $s(q_i,p_i)$ satisfies the following relation \cite[Lemma 2.3]{Holzapfel-1988}: \[
s(q_i,p_i)= \frac{1}{12} \left(\frac{q_i+q_i'}{p_i}-3\ell_i+\sum_{j=1}^{\ell_i} b_{i,j} \right).
\]
Thus, we can re-express the formula \eqref{eq:K_square} as \begin{equation}\label{eq:K_square_2}
    K_S^2=9+12\sum_{i=1}^n s(q_i,p_i) -2\sum_{i=1}^n \left( 1-\frac{1}{p_i}\right).
\end{equation}

\begin{remark}
Recall that the orbifold Euler characteristic of a normal complex surface $S$ with quotient singularities is given by 
\begin{equation}\label{eq:orbifold_euler_characteristic}
    e_{\textup{orb}}(S)=e(S)-\sum_{p\in \operatorname{Sing}(S)} \left(1-\frac{1}{|G_p|}\right),
\end{equation}
where $e(S)$ is the ordinary Euler characteristic of $S$ and $G_p$ denotes the local fundamental group of a singular point $p\in \operatorname{Sing}(S)$ \cite{Satake-1957,Kollar-2008}. In particular, if $S$ is as in Lemma \ref{lem:topology_of_QHCP2}, then we have \begin{equation*}
    e_{\textup{orb}}(S)=3-\sum_{i=1}^n\left(1- \frac{1}{p_i}\right).
\end{equation*}
Combining this with \eqref{eq:K_square_2}, we have the following computation: \begin{align*}
\frac{1}{3}p_1^{\textup{orb}}(S) & =\frac{1}{3}\left( c_1^{\textup{orb}}(S)^2 -2c_2^{\textup{orb}}(S)\right) = \frac{1}{3}\left( K_S^2-2e_{\textup{orb}}(S) \right) \\ &= 1+4\sum_{i=1}^n s(q_i,p_i) =\sigma(S)+4\sum_{i=1}^n s(q_i,p_i),
\end{align*}
where $\sigma(S)=1$ is the signature of the intersection form of $S$. This formula is precisely the content of Kawasaki's orbifold signature theorem \cite{Kawasaki-1978} applied to $S$. Conversely, the formula \eqref{eq:K_square_2} can also be derived from the orbifold signature theorem.     
\end{remark}

The class $K_S\in H^2(S;\mathbb{Q})$ restricts to an integral class  $K_{X_S}:=K_S|_{X_S}\in H^2(X_S;\mathbb{Z})$, since the orbibundle $K_S\to S$ is a genuine line bundle over $X_S$. 
\begin{proposition}\label{prop:K_X_S} Let $S$ be a rational homology projective plane with $H_1(S^0;\mathbb{Z})=0$, having exactly $n$ cyclic quotient singularities of types \[
    \tfrac{1}{p_1}(1,q_1),\dots,\tfrac{1}{p_n}(1,q_n).
    \]
Then the class $K_{X_S}$ is a characteristic covector in $\operatorname{Char}(X_S)\subset H^2(X_S;\mathbb{Z})$ with \[
K_{X_S}^2= 9+12\sum_{i=1}^n s(q_i,p_i) -2\sum_{i=1}^n \left( 1-\frac{1}{p_i}\right).
\]
Moreover, if $\mathfrak{s}_{K_{X_S}}\in \operatorname{Spin}^c(X_S)$ is the $\operatorname{spin}^c$ structure on $X_S$ with $c_1(\mathfrak{s}_{K_{X_S}})=K_{X_S}$, then its restriction to each boundary component $-L(p_i,q_i)$ is given by  \begin{equation}\label{eq:restriction_of_K_X_S}
    \mathfrak{s}_{K_{X_S}}|_{-L(p_i,q_i)}=\mathfrak{s}_{p_i,q_i}(p_i-1)\in \operatorname{Spin}^c(-L(p_i,q_i))\cong \operatorname{Spin}^c(L(p_i,q_i)),
\end{equation}
assuming either $p_i$ is odd or $\sigma(q_i,p_i,+1)\neq \sigma(q_i,p_i,-1)$. Furthermore, $p_1\cdots p_n K_{X_S}^2$ is a nonzero square number, which is even if and only if the product $p_1\cdots p_n$ is even. 
\end{proposition}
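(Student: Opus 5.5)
The proof has four parts: characteristicity, the value of $K_{X_S}^2$, the boundary restriction, and the square-number statement. I handle them in that order.

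\emph{Characteristicity.} By Lemma \ref{lem:topology_of_QHCP2}, $X_S$ is homotopy equivalent to $S^0$ and $H_1(X_S;\mathbb{Z})=0$. Since $K_{X_S}$ is the first Chern class of the canonical line bundle of the complex surface $S^0$, it equals $-c_1(TS^0)$ and so reduces mod $2$ to $w_2(TX_S)$. Hence $K_{X_S}\in\operatorname{Char}(X_S)$, and because $H_1=0$ there is a unique $\mathfrak{s}_{K_{X_S}}$ with this first Chern class.

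\emph{Computation of $K_{X_S}^2$.} The square is computed in $H^2(X_S;\mathbb{Z})\subset H_2(X_S;\mathbb{Q})$. The inclusion $X_S\hookrightarrow S$ induces an isomorphism $H_2(X_S;\mathbb{Q})\cong H_2(S;\mathbb{Q})$ compatible with the rational intersection forms, because the removed cones are rationally acyclic. Under this isomorphism $K_{X_S}$ corresponds to the orbifold class $K_S$. Therefore $K_{X_S}^2=K_S^2$, and formula \eqref{eq:K_square_2} gives the stated value.

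\emph{Boundary restriction.} I identify $\mathfrak{s}_{K_{X_S}}$ with the conjugate $\overline{\mathfrak{s}}_{\mathrm{can}}$ of the canonical $\operatorname{spin}^c$ structure of the complex manifold $S^0$, whose determinant bundle is $K$. Near the $i$-th singular point, $S$ is analytically the cone $CL(p_i,q_i)$. So $\overline{\mathfrak{s}}_{\mathrm{can}}$ on $X_S$ restricted to the collar $L(p_i,q_i)\times I$ agrees with the restriction of $\overline{\mathfrak{s}}_{\mathrm{can}}$ on $C-\{0\}$ (Remark \ref{rmk:orbifold_spin^c}). Proposition \ref{prop:canonical_spin^c_structure} then identifies the restriction as $\mathfrak{s}_{p_i,q_i}(p_i-1)$. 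The point needing care is orientation. The boundary component of $X_S$ is $-L(p_i,q_i)$, and we use the canonical identification $\operatorname{Spin}^c(-Y)\cong\operatorname{Spin}^c(Y)$, which commutes with restriction. So the restriction from the $X_S$ side equals, under that identification, the restriction from the cone side, and \eqref{eq:restriction_of_K_X_S} follows.

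\emph{The square-number statement.} By Lemma \ref{lem:topology_of_QHCP2}, $Q_{X_S}\cong\langle P\rangle$ with $P=p_1\cdots p_n$ and generator $g$. Its dual lattice is generated by $g^*=g/P$. Writing $K_{X_S}=m\,g^*$ with $m\in\mathbb{Z}$ gives $K_{X_S}^2=m^2/P$, so $PK_{X_S}^2=m^2$.

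It remains to show $m\neq0$ and the parity claim. We have $m=\langle K_{X_S},g\rangle$. Characteristicity gives $m\equiv g^2=P\pmod 2$, so $m$, and hence $m^2$, is even exactly when $P$ is even. If $P$ is odd, $m$ is odd and therefore nonzero. If $P$ is even, then $m=0$ would force $K_S$ to be numerically trivial rationally, hence $K_S^2=0$. Then I would verify directly from the formula that $9+12\sum s(q_i,p_i)-2\sum(1-1/p_i)\neq0$; this is the step I expect to be the main obstacle.

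As an alternative for that last step, $m=0$ would make $\mathfrak{s}_{K_{X_S}}$ restrict to a spin structure on every boundary lens space. One could try to check that $\mathfrak{s}_{p,q}(p-1)$ is not spin. However, it can be spin for some $(p,q)$, so I would prefer the algebraic fact. For a rational homology $\mathbb{CP}^2$ with quotient singularities, $K_S\equiv0$ numerically would mean $S$ is log Enriques-type. Then $e_{\mathrm{orb}}(S)$-type constraints or the known classification should exclude this case under $H_1(S^0)=0$, and I would cite that as established in the earlier literature.
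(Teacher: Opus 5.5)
Your arguments for characteristicity, for $K_{X_S}^2=K_S^2$, for the boundary restriction, and for $p_1\cdots p_nK_{X_S}^2=m^2$ with the parity of $m$ are the same as the paper's. For the boundary restriction you work on $S^0$ and $C-\{0\}$ via Fukumoto's bijection instead of with orbifold $\operatorname{spin}^c$ structures on $S$, which is equivalent. Your observation that for odd $P=p_1\cdots p_n$ characteristicity alone forces $m$ to be odd, hence nonzero, is correct and needs no algebraic geometry.

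The gap is the case $P$ even. Your main plan is to check directly from the formula that $9+12\sum_i s(q_i,p_i)-2\sum_i(1-1/p_i)\neq 0$. This cannot work, because the expression can vanish. For a single singularity of type $\tfrac{1}{p}(1,p-1)$ one has $12s(p-1,p)=-(p-1)(p-2)/p$, so the expression equals $10-p$, which is $0$ for $p=10$. There $P=10$ is even and $m=0$ is even, so every numerical constraint available to you is satisfied. Whether $K_S^2$ can vanish is therefore an algebro-geometric question. Your fallback (``$e_{\mathrm{orb}}$-type constraints or the known classification'') does not name the mechanism that rules it out.

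The paper uses the one-line fact that, under $H_1(S^0;\mathbb{Z})=0$, either $K_S$ or $-K_S$ is ample. Since $b_2(S)=1$, $K_S$ is ample, anti-ample, or numerically trivial, and $m=0$ is exactly the last case. To exclude it:
\begin{enumerate}
\item If $K_S\equiv 0$, then $K_S$ is torsion in $\operatorname{Cl}(S)$, because $q(S)=0$ and the singularities are rational.
\item Let $r$ be the least positive integer with $rK_S\sim 0$. The index-one cover $\operatorname{Spec}\bigoplus_{j=0}^{r-1}\mathcal{O}_S(-jK_S)$ is a connected cyclic cover of degree $r$, \'etale over $S^0$. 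So $H_1(S^0;\mathbb{Z})=0$ forces $r=1$.
\item Then $K_S$ is Cartier and trivial, so the singularities are rational double points, and the minimal resolution $\tilde S$ has $K_{\tilde S}\sim 0$. Hence $\tilde S$ is a K3 or abelian surface, so $p_g(\tilde S)=1$.
\item But $H^2(\tilde S;\mathbb{Q})$ is spanned by the exceptional curves and a pulled-back ample class, so $p_g(\tilde S)=0$, a contradiction.
\end{enumerate}
With this input in place of your proposed verification from the formula, your proof is complete.
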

\begin{proof}
    We have $K_{X_S}\in \operatorname{Char}(X_S)$ since $K_{X_S}=-c_1(TX_S)$ is an integral lift of $w_2(TX_S)$. The formula for $K_{X_S}^2$ directly follows from \eqref{eq:K_square_2}, noting that $K_S^2=K_{X_S}^2$. 
    
    Next, let $\mathfrak{s}$ denote the canonical orbifold $\operatorname{spin}^c$ structure on $S$ induced by the complex structure. Recall that the singular point of $S$ of type $\tfrac{1}{p_i}(1,q_i)$ has a neighborhood that is analytically isomorphic to the cone $CL(p_i,q_i)=D^4/\mathbb{Z}_{p_i}$, so that we have a decomposition $S=(\bigcup_{i=1}^n CL(p_i,q_i)) \cup_\partial X_S$. Consider the commutative diagram \[
    \begin{tikzcd}[matrix scale=0.5]
    & \operatorname{Spin}^c(X_S) \ar[r] & \operatorname{Spin}^c(-L(p_i,q_i)) \ar[dd, leftrightarrow, "\cong"] \\
        \operatorname{Spin}^c_{\mathrm{orb}}(S) \ar[ru] \ar[rd] \\&  \operatorname{Spin}^c_{\mathrm{orb}}(CL(p_i,q_i)) \ar[r] & \operatorname{Spin}^c(L(p_i,q_i)),
    \end{tikzcd}
    \]
    where the right vertical map is the canonical identification described in Section \ref{subsec:spin^c_structures}, and all the other maps are restriction maps. The restriction $\mathfrak{s}|_{CL(p_i,q_i)}$ is the canonical $\operatorname{spin}^c$ structure $\mathfrak{s}_{\mathrm{can}}$ on the cone, and we have $\overline{\mathfrak{s}}_{\mathrm{can}}|_{L(p_i,q_i)}=\mathfrak{s}_{p_i,q_i}(p_i-1)$ by Proposition \ref{prop:canonical_spin^c_structure}. On the other hand, since $\det (\overline{\mathfrak{s}})=K_S$, we have $c_1(\overline{\mathfrak{s}}|_{X_S})=K_S|_{X_S}=K_{X_S}$. Thus, we must have $\overline{\mathfrak{s}}|_{X_S}=\mathfrak{s}_{K_{X_S}}$ since the map $c_1\colon\operatorname{Spin}^c(X_S)\to \operatorname{Char}(X_S)$ is a bijection. Now \eqref{eq:restriction_of_K_X_S} follows by applying the commutative diagram to $\overline{\mathfrak{s}}\in \operatorname{Spin}^c_{\mathrm{orb}}(S)$.
    
    To prove the final statement, let $v\in H_2(X_S;\mathbb{Z})\cong \mathbb{Z}$ be a generator. We have $v^2=p_1\cdots p_n$ by Lemma \ref{lem:topology_of_QHCP2}, so its dual generator $v^*\in H^2(X_S;\mathbb{Z})$ satisfies $(v^*)^2=1/(p_1\cdots p_n)$. Let $m\in \mathbb{Z}$ be the integer such that $K_{X_S}=mv^*\in H^2(X_S;\mathbb{Z})$. Under the assumption $H_1(S^0;\mathbb{Z})=0$, $m$ must be nonzero since either $K_S$ or $-K_S$ is ample. It follows that \[
    p_1\cdots p_n K_{X_S}^2=p_1\cdots p_n \cdot \frac{m^2}{p_1\cdots p_n}=m^2
    \]
    is a nonzero square number. Finally, since $K_{X_S}\in \operatorname{Char}(X_S)$, $m$ is even if and only if the product $p_1\cdots p_n$ is even. This completes the proof.
\end{proof}

We end this section by stating the following orbifold BMY inequality, which will play a crucial role in the proof of Theorem \ref{thm:aMY}. 

\begin{theorem}[Orbifold BMY inequality, {\cite{Sakai-1980,Miyaoka-1984,KoNS-1989,Megyesi-1999}}]\label{thm:oBMY} 
  Let $S$ be a normal projective surface with quotient singularities, whose canonical divisor $K_S$ is nef. Then \begin{equation*} 
            K_S^2 \leq 3e_{\textup{orb}}(S),
       \end{equation*}
 where $e_{\textup{orb}}(S)$ denotes the orbifold Euler characteristic \eqref{eq:orbifold_euler_characteristic} of $S$. 
\end{theorem}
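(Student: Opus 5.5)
Since Theorem~\ref{thm:oBMY} is a known result quoted from \cite{Sakai-1980,Miyaoka-1984,KoNS-1989,Megyesi-1999}, what follows is a sketch of how I would reconstruct its proof. The plan is to pass to a smooth log pair on the minimal resolution, or, when $K_S$ is ample, to use an orbifold Kähler--Einstein metric.

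\emph{Ample case (analytic route).} Assume first that $K_S$ is ample. Since $S$ has only quotient singularities, it is a compact complex orbifold. The Aubin--Yau continuity method, carried out on local uniformizing charts, produces an orbifold Kähler--Einstein metric with negative Ricci curvature; this is the step of \cite{KoNS-1989}. For a Kähler--Einstein metric on a surface, the Chern forms satisfy the pointwise inequality $(3c_2-c_1^2)\wedge\text{-form}\ge 0$, with equality exactly for ball quotients. Integrating gives $3c_2^{\mathrm{orb}}(S)-c_1^{\mathrm{orb}}(S)^2\ge 0$. The orbifold Gauss--Bonnet theorem \cite{Satake-1957} identifies $c_2^{\mathrm{orb}}(S)$ with $e_{\mathrm{orb}}(S)$ in \eqref{eq:orbifold_euler_characteristic}, because each singular point contributes $1/|G_p|$ instead of $1$. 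We also have $c_1^{\mathrm{orb}}(S)^2=K_S^2$. Together these give $K_S^2\le 3e_{\mathrm{orb}}(S)$.

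\emph{Nef case (algebraic route).} For general nef $K_S$, I would follow Miyaoka and Megyesi. Let $\pi\colon Y\to S$ be the minimal resolution and write $\pi^*K_S=K_Y+D$. Here $D=\sum a_jE_j$ is supported on the exceptional curves with $0\le a_j<1$, since quotient singularities are log terminal. Then $(Y,D)$ is a log canonical pair, $K_Y+D$ is nef, and $(K_Y+D)^2=K_S^2$. Miyaoka's logarithmic inequality for such pairs \cite{Miyaoka-1984,Megyesi-1999} bounds $(K_Y+D)^2$ by $3$ times a corrected Euler number. This corrected number equals $e(Y\setminus \operatorname{Supp}D)$ plus local terms, one for each connected component of $\operatorname{Supp}D$.

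The key local computation is to show that, for the exceptional configuration of a quotient singularity $\mathbb{C}^2/G$, this local term equals exactly $1/|G|$. I would do this by comparing with the $G$-cover of a small ball and using that the orbifold Euler number is multiplicative under finite covers. Summing the local terms then reproduces $e_{\mathrm{orb}}(S)$.

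\emph{Expected obstacle.} The hardest part is the nef-but-not-ample situation. If $K_S$ is nef and big, one can contract the $K_S$-trivial curves to the canonical model. The new singularities are again quotient singularities, but $e_{\mathrm{orb}}$ changes, and one has to check that it does not increase in a way that breaks the inequality. If $K_S$ is not big, then $K_S^2=0$, and the claim reduces to $e_{\mathrm{orb}}(S)\ge 0$. This needs Miyaoka's semipositivity argument, via generic semipositivity of the orbifold cotangent sheaf, rather than a metric. I expect the local Euler-number bookkeeping in the log inequality to be the most delicate step.
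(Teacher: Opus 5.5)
The paper does not prove this statement; it is quoted from \cite{Sakai-1980,Miyaoka-1984,KoNS-1989,Megyesi-1999}. It is applied only in Lemma~\ref{lem:6.3} (and in the remark on the $(2,3,7,n)$ and $(2,3,11,13)$ cases), always with $K_S$ ample. Your ample-case argument is the standard proof: an orbifold K\"ahler--Einstein metric, the pointwise inequality $3c_2(\omega)-c_1(\omega)^2\ge 0$ for K\"ahler--Einstein surfaces, Satake's orbifold Gauss--Bonnet $c_2^{\textup{orb}}(S)=e_{\textup{orb}}(S)$, and $c_1^{\textup{orb}}(S)^2=K_S^2$. It is complete modulo the orbifold Aubin--Yau existence theorem. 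It treats every singular point, Du Val or not, uniformly as an orbifold point, so it already covers everything the paper needs.

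The nef case as you sketch it has a genuine gap at Du Val singularities. Take a rational double point $p\in S$, for instance the $\tfrac12(1,1)$ point in Theorem~\ref{thm:aMY}. The minimal resolution is crepant there, so all $a_j=0$ on the exceptional configuration $E_p$, and $E_p$ is disjoint from $\operatorname{Supp}D$. A logarithmic BMY inequality for the pair $(Y,D)$ therefore counts $E_p$ inside $e(Y\setminus\operatorname{Supp}D)$ with its full Euler number $r_p+1$, not $1/|G_p|$. So your claim that summing the local terms reproduces $e_{\textup{orb}}(S)$ is false. For a single $\tfrac12(1,1)$ point you lose $3(2-\tfrac12)=\tfrac92$ on the right-hand side, which would destroy the bound $3e_{\textup{orb}}(S)=\tfrac1{10}+\tfrac3n$ that Lemma~\ref{lem:6.3} relies on.

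The missing idea is that a local correction must be attached to \emph{every} singular point of $S$, including the crepant ones. Handling exactly these $(-2)$-configurations is the main point of \cite{Miyaoka-1984}. Megyesi's version avoids the resolution and is formulated directly on the singular surface, with local orbifold Euler numbers at its singular points. As written, your nef-case paragraph amounts to citing these results rather than reconstructing them.

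Your ``expected obstacle'' is a separate issue. It arises only if you extend the analytic route through the canonical model. There, the inequality $e_{\textup{orb}}(S')\le e_{\textup{orb}}(S)$ under contraction of $K_S$-trivial curves is an extra lemma that you state but do not prove. The case $K_S^2=0$ genuinely needs Miyaoka's semipositivity rather than a metric.
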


\section{Proof of Theorem \ref{thm:main}}\label{sec:main_thm_proof}

In this section, we give a proof of Theorem \ref{thm:main}. For relatively prime integers $p>q>0$, we recall the following notations established in the previous sections:
\begin{itemize}
    \item $\mathfrak{s}_{p,q}(i) \in \operatorname{Spin}^c(L(p,q))$ denotes the $\operatorname{spin}^c$ structure on $L(p,q)$ corresponding to $i\in \mathbb{Z}_p$ under the Ozsv\'ath-Szab\'o identification $\operatorname{Spin}^c(L(p,q))\cong \mathbb{Z}_p$. 
    \item $X(p,q)$ denotes the negative definite plumbed $4$-manifold with $\partial X(p,q)=L(p,q)$.
    \item $K_{p,q}\in H^2(X(p,q);\mathbb{Z})$ denotes the characteristic covector defined in \eqref{eq:def_of_K_p,q}. 
    \item $\mathfrak{s}_{K_{p,q}}\in \operatorname{Spin}^c(X(p,q))$ denotes the $\operatorname{spin}^c$ structure on $X(p,q)$ satisfying $c_1(\mathfrak{s}_{K_{p,q}})=K_{p,q}$. 
    \item $f_{p,q}\colon L(p,q)\to L(p,p-q)$ denotes the orientation-reversing diffeomorphism defined by $f_{p,q}[z_1,z_2]=[\bar{z}_1,z_2]$. 
\end{itemize} 

Let $S$ be a $\mathbb{Q}$-homology $\mathbb{CP}^2$ with exactly $n$ cyclic quotient singularities of types 
\[
    \tfrac{1}{p_1}(1,q_1),\dots,\tfrac{1}{p_n}(1,q_n),
\]
and assume that its smooth locus $S^0$ satisfies $H_1(S^0;\mathbb{Z})=0$. We also recall the following notations regarding $S$: 
\begin{itemize}
    \item $X_S$ denotes the associated $4$-manifold obtained from $S$ by removing cone neighborhoods of the singular points. 
    \item $K_{X_S}\in H^2(X_S;\mathbb{Z})$ denotes the restriction to $X_S$ of the first orbifold Chern class $K_S\in H^2(S;\mathbb{Q})$ of the canonical line orbibundle over $S$. 
    \item $\mathfrak{s}_{K_{X_S}}\in \operatorname{Spin}^c(X_S)$ denotes the $\operatorname{spin}^c$ structure on $X_S$ satisfying $c_1(\mathfrak{s}_{K_{X_S}})=K_{X_S}$. We use the same symbol $\mathfrak{s}_{K_{X_S}}$ for the corresponding $\operatorname{spin}^c$ structure on the orientation reversal $-X_S$ of $X_S$.
\end{itemize}
Let us assume that for each $i=1,\dots,n$, either $p_i$ is odd or $\sigma(q_i,p_i,+1)\neq \sigma(q_i,p_i,-1)$, where $\sigma$ is the function given in \eqref{eq:def_of_sigma}. Recall from Lemma \ref{lem:topology_of_QHCP2} that \[
\partial X_S=\coprod_{i=1}^n -L(p_i,q_i).
\]
Thus, the boundary of the orientation reversal $-X_S$ of $X_S$ is given by \[
\partial (-X_S)=-\partial X_S=\coprod_{i=1}^n L(p_i,q_i).
\]
On the other hand, the boundary of the disjoint union $X':=\coprod_{i=1}^n X(p_i,p_i-q_i)$ is given by \[
\partial X'=\coprod_{i=1}^n \partial X(p_i,p_i-q_i) = \coprod_{i=1}^n L(p_i,p_i-q_i).
\] 
Let $f\colon\partial(-X_S)\to \partial X'$ be the orientation-reversing diffeomorphism given by \[
f=\coprod_{i=1}^n f_{p_i,q_i}\colon \coprod_{i=1}^n L(p_i,q_i) \longrightarrow \coprod_{i=1}^n L(p_i,p_i-q_i). 
\]
Let $\mathfrak{s}'=\coprod_{i=1}^n \mathfrak{s}_{K_{p_i,p_i-q_i}}\in \operatorname{Spin}^c(X')$ denote the $\operatorname{spin}^c$ structure on $X'$ such that \[
\mathfrak{s}'|_{X(p_i,p_i-q_i)}=\mathfrak{s}_{K_{p_i,p_i-q_i}} \quad (i=1,\dots,n).
\] 
\begin{claim}\label{claim:spin^c}
    We have $f^*(\mathfrak{s}'|_{\partial X'})=\mathfrak{s}_{K_{X_S}}|_{\partial(-X_S)}$. 
\end{claim}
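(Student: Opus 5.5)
The identity in Claim \ref{claim:spin^c} is between $\operatorname{spin}^c$ structures on a disjoint union of lens spaces. Both sides are determined componentwise, so it suffices to check, for each $i=1,\dots,n$,
\[
f_{p_i,q_i}^*\bigl(\mathfrak{s}_{K_{p_i,p_i-q_i}}|_{L(p_i,p_i-q_i)}\bigr)=\mathfrak{s}_{K_{X_S}}|_{L(p_i,q_i)}\in \operatorname{Spin}^c(L(p_i,q_i)).
\]
Here the right-hand side uses the canonical identification $\operatorname{Spin}^c(-L(p_i,q_i))\cong\operatorname{Spin}^c(L(p_i,q_i))$ of Section \ref{subsec:spin^c_structures}, which is compatible with restriction and with the orientation reversal $X_S\leadsto -X_S$. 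I will compute both sides in terms of the Ozsv\'ath--Szab\'o labels $\mathfrak{s}_{p,q}(j)$ and compare them.

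For the left-hand side, I first check that the hypothesis passes from $(p_i,q_i)$ to $(p_i,p_i-q_i)$, so that Proposition \ref{prop:K_p,q} applies to $X(p_i,p_i-q_i)$. If $p_i$ is odd, this is immediate. If $p_i$ is even, Remark \ref{rmk:sigma_and_d_invariants} says the condition $\sigma(q,p,+1)\neq\sigma(q,p,-1)$ is equivalent to the two spin structures of $L(p,q)$ having distinct $d$-invariants. Since $L(p,p-q)\cong -L(p,q)$ and $d$-invariants change sign under orientation reversal, this distinctness is symmetric in $q\leftrightarrow p-q$. (Alternatively, I could check directly from \eqref{eq:def_of_sigma} that $\sigma(p-q,p,\epsilon)=-\sigma(q,p,\epsilon)$.) Proposition \ref{prop:K_p,q} then gives $\mathfrak{s}_{K_{p_i,p_i-q_i}}|_{L(p_i,p_i-q_i)}=\mathfrak{s}_{p_i,p_i-q_i}(0)$. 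Applying Proposition \ref{prop:orientation-reversing} with $j=0$, whose hypothesis on $(p_i,q_i)$ is exactly our standing assumption, I obtain $f_{p_i,q_i}^*\mathfrak{s}_{p_i,p_i-q_i}(0)=\mathfrak{s}_{p_i,q_i}(p_i-1)$.

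For the right-hand side, Proposition \ref{prop:K_X_S} under the same assumption gives $\mathfrak{s}_{K_{X_S}}|_{-L(p_i,q_i)}=\mathfrak{s}_{p_i,q_i}(p_i-1)$. So the two sides agree for each $i$, and gluing the components proves the claim.

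The only step that needs care is the symmetry of the $\sigma$-hypothesis under $q\mapsto p-q$. I expect it to follow from the $d$-invariant characterization in Remark \ref{rmk:sigma_and_d_invariants}, together with $L(p,p-q)\cong -L(p,q)$ via $f_{p,q}$ and $d(-Y,\mathfrak{t})=-d(Y,\mathfrak{t})$. This argument is essentially the same as the one already used in the proof of Proposition \ref{prop:orientation-reversing}.
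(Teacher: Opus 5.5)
Your proposal is correct and follows the paper's proof step for step: you reduce to each boundary component, transfer the hypothesis from $(p_i,q_i)$ to $(p_i,p_i-q_i)$, and chain Proposition~\ref{prop:K_p,q}, Proposition~\ref{prop:orientation-reversing} with $i=0$, and Proposition~\ref{prop:K_X_S}. One small slip in your parenthetical: since $\csc\frac{\pi k(p-q)}{p}=(-1)^{k+1}\csc\frac{\pi kq}{p}$, the identity that follows from \eqref{eq:def_of_sigma} is $\sigma(p-q,p,\epsilon)=-\sigma(q,p,-\epsilon)$, which is what the paper uses, and not $-\sigma(q,p,\epsilon)$ (that version already fails for $(p,q)=(2,1)$); the corrected identity still gives the symmetry of the hypothesis you need, as does your $d$-invariant argument.
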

\begin{proof}
    By considering each boundary component separately, it suffices to show that \[
    f^* (\mathfrak{s}'|_{L(p_i,p_i-q_i)}) = \mathfrak{s}_{K_{X_S}}|_{L(p_i,q_i)} \quad (i=1,\dots,n).
    \]
   Recall that we assume either $p_i$ is odd or $\sigma(q_i,p_i,+1)\neq \sigma(q_i,p_i,-1)$. Also note that $\sigma(p_i-q_i,p_i,\pm 1)=-\sigma(q_i,p_i,\mp 1)$). Thus we have \begin{align*}
       f^* (\mathfrak{s}'|_{L(p_i,p_i-q_i)})  & = f^*(\mathfrak{s}_{K_{p_i,p_i-q_i}}|_{L(p_i,p_i-q_i)}) & (\text{by the definition of $\mathfrak{s}'$}) \\
       & = f^*(\mathfrak{s}_{p_i,p_i-q_i}(0)) & (\text{by Proposition \ref{prop:K_p,q}}) \\
       & = f_{p_i,q_i}^*(\mathfrak{s}_{p_i,p_i-q_i}(0)) & (\text{by the definition of $f$}) \\
       & = \mathfrak{s}_{p_i,q_i}(p_i-1) & (\text{by Proposition \ref{prop:orientation-reversing}}) \\
       & = \mathfrak{s}_{K_{X_S}}|_{L(p_i,q_i)} & (\text{by Proposition \ref{prop:K_X_S}}). 
   \end{align*}
   This completes the proof of the claim. 
\end{proof}

Claim \ref{claim:spin^c} implies the existence of a global $\operatorname{spin}^c$ structure $\mathfrak{s}$ on the closed $4$-manifold $W_S:=X'\cup_f (-X_S)$ such that $\mathfrak{s}|_{X'}=\mathfrak{s}'$ and $\mathfrak{s}|_{-X_S}=\mathfrak{s}_{K_{X_S}}$. Applying Proposition \ref{prop:spin^c_gluing}, we obtain a lattice embedding  \begin{equation}\label{eq:embedding_iota}
    \iota\colon Q_{X'}\oplus Q_{-X_S} \longrightarrow -\mathbb{Z}^N,
\end{equation}
    where \[
    N=b_2(W_S)=b_2(X')+b_2(-X_S)=\sum_{i=1}^n b_2(X(p_i,p_i-q_i)) +1,
    \] 
such that its $\mathbb{Q}$-linear extension $\iota_{\mathbb{Q}}$ maps $c_1(\mathfrak{s}')+c_1(\mathfrak{s}_{K_{X_S}})\in Q_{X'}^*\oplus Q_{-X_S}^*$ to a vector $\mathbf{K}:=\sum_{i=1}^N a_i e_i\in -\mathbb{Z}^N$ for some odd integers $a_1,\dots,a_N$ satisfying \[
   c_1(\mathfrak{s}')^2+c_1(\mathfrak{s}_{K_{X_S}})^2= -\sum_{i=1}^N a_i^2.
    \]
Note that $\mathbf{K}$ is precisely the image of $c_1(\mathfrak{s})$ under $Q_{W_S}^*=Q_{W_S}\cong -\mathbb{Z}^N$. 

By Proposition \ref{prop:K_p,q}, we have \begin{align*}
    c_1(\mathfrak{s}')^2 &= \sum_{i=1}^n c_1(\mathfrak{s}_{K_{p_i,p_i-q_i}})^2 =\sum_{i=1}^n K_{p_i,p_i-q_i}^2 \\
     &=\sum_{i=1}^n \left( -b_2(X(p_i,p_i-q_i))-2\left(1-\frac{1}{p_i}\right)-12s(p_i-q_i,p_i)\right) \\
    & =\sum_{i=1}^n \left(  -b_2(X(p_i,p_i-q_i))-2\left(1-\frac{1}{p_i}\right)+12s(q_i,p_i)\right),
\end{align*}
where the last equality uses the property $s(p-q,p)=-s(q,p)$ of Dedekind sums, which easily follows from the formula \eqref{eq:Dedekind}. On the other hand, Proposition \ref{prop:K_X_S} shows that \[
c_1(\mathfrak{s}_{K_{X_S}})^2=-K_{X_S}^2=-\left(9+12\sum_{i=1}^n s(q_i,p_i) -2\sum_{i=1}^n \left( 1-\frac{1}{p_i}\right)\right),
\]
where the minus sign arises since we are considering the orientation reversal $-X_S$ of $X_S$. It follows that the odd integers $a_1,\dots,a_N$ satisfy \begin{align*}
    &-\sum_{i=1}^N a_i^2 =c_1(\mathfrak{s}')^2+c_1(\mathfrak{s}_{K_{X_S}})^2 \\
    & = \sum_{i=1}^n \left(  -b_2(X(p_i,p_i-q_i))-2\left(1-\frac{1}{p_i}\right)+12s(q_i,p_i)\right)-\left(9+12\sum_{i=1}^n s(q_i,p_i) -2\sum_{i=1}^n \left( 1-\frac{1}{p_i}\right)\right) \\
    & = -9-\sum_{i=1}^n b_2(X(p_i,p_i-q_i))=-8-N,
\end{align*}
i.e., \[
-\mathbf{K}^2=\sum_{i=1}^N a_i^2 =N+8.
\]
Since each $a_i$ is an odd integer, this equality holds if and only if exactly one $a_i$ is $\pm 3$ and the remaining $a_i$'s are all $\pm 1$. That is, $\mathbf{K}$ is of the form \[
\mathbf{K}=\epsilon_1e_1+\cdots+3\epsilon_{i_0}e_{i_0}+\cdots+\epsilon_Ne_N \quad (1\leq i_0\leq N,\, \epsilon_i\in \{\pm 1\}). 
\]

Now consider the restriction of the embedding \eqref{eq:embedding_iota} to $Q_{X'}=\bigoplus_{i=1}^n Q_{X(p_i,p_i-q_i)}$, which is a codimension-one lattice embedding into $-\mathbb{Z}^N$: \[
\iota\colon \bigoplus_{i=1}^n Q_{X(p_i,p_i-q_i)} \longrightarrow -\mathbb{Z}^N.
\]
Let us verify properties \ref{item(a)_main} and \ref{item(b)_main} of Theorem \ref{thm:main}. For \ref{item(a)_main}, recall from our construction that $\mathbf{K}=\iota_{\mathbb{Q}}(c_1(\mathfrak{s}')+K_{X_S})$ and that $c_1(\mathfrak{s'})=\sum_{i=1}^n c_1(\mathfrak{s}_{K_{p_i,p_i-q_i}})=\sum_{i=1}^n K_{p_i,p_i-q_i}$. Therefore, for a standard basis vector $v\in \bigoplus_{i=1}^n Q_{X(p_i,p_i-q_i)}$, the condition \[
\mathbf{K}\cdot \iota(v)=\begin{cases}
    -v^2 & \text{if $v$ is right-end} \\
    -v^2-2 & \text{otherwise}
\end{cases}
\]
immediately follows from the definition \eqref{eq:def_of_K_p,q} of $K_{p,q}$. For \ref{item(b)_main}, recall from Proposition \ref{prop:spin^c_gluing} that our embedding \eqref{eq:embedding_iota} satisfies $\iota(Q_{-X_S})=\iota(Q_{X'})^\perp$. Since $Q_{-X_S}\cong \langle -p_1\cdots p_n\rangle$ by Lemma \ref{lem:topology_of_QHCP2}, we see that the orthogonal complement $\iota(Q_{X'})^\perp$ is generated by a vector of square $-p_1\cdots p_n$. This completes the proof of Theorem \ref{thm:main}. 

\begin{remark}
The statement of Theorem \ref{thm:main} is formulated in a convenient way for the proof of Theorem \ref{thm:aMY}. However, when applying this theorem to specific examples, it is more practical to use the following tractable form.

For each $i=1,\dots,n$, suppose that the Hirzebruch-Jung continued fraction of $p_i/(p_i-q_i)$ is given by \[
\frac{p_i}{p_i-q_i}=\left[a_{i,1},\dots, a_{i,s_i} \right],
\]
and let $v_{i,1},\dots,v_{i,s_i}\in H_2(X(p_i,p_i-q_i);\mathbb{Z})$ denote the corresponding standard basis vectors. We have seen in Remark \ref{rmk:K_p,q_in_terms_of_v_i} that \[
K_{p_i,p_i-q_i}=\sum_{j=1}^{s_i} y_j(p_i,p_i-q_i)\cdot v_{i,j} \in H_2(X(p_i,p_i-q_i);\mathbb{Z})\otimes \mathbb{Q},
\]
where the coefficients $y_j(p_i,p_i-q_i)$ are determined by \eqref{eq:y_i}. It follows that \[
c_1(\mathfrak{s}')=\sum_{i=1}^n c_1(\mathfrak{s}_{K_{p_i,p_i-q_i}})=\sum_{i=1}^n K_{p_i,p_i-q_i}=\sum_{i=1}^n \sum_{j=1}^{s_i} y_j(p_i,p_i-q_i)\cdot v_{i,j} \in H_2(X';\mathbb{Z})\otimes \mathbb{Q},
\]
noting that $H_2(X';\mathbb{Z}) =\bigoplus_{i=1}^n H_2(X(p_i,p_i-q_i);\mathbb{Z})$. 

Next, let $v\in H_2(-X_S;\mathbb{Z})\cong \mathbb{Z}$ be a generator. By Lemma \ref{lem:topology_of_QHCP2}, it satisfies $v^2=-p_1\cdots p_n$. Let $v^*\in H^2(-X_S;\mathbb{Z})$ denote its dual generator. Viewing $H^2(-X_S;\mathbb{Z})\subset H_2(-X_S;\mathbb{Z})\otimes \mathbb{Q}$ as in \eqref{eq:second_cohomology_as_dual_lattice}, we have $v^*=-1/(p_1\cdots p_n) \cdot v$ (since $v^*\cdot v=1$). Moreover, the proof of Proposition \ref{prop:K_X_S} shows that $K_{X_S}= \pm \sqrt{p_1\cdots p_n K_S^2}\cdot v^*$, and by changing the sign of $v$ if necessary, we may assume that \[
K_{X_S}=  \sqrt{p_1\cdots p_n K_S^2}\cdot v^* = -\frac{ \sqrt{p_1\cdots p_n K_S^2}}{p_1\cdots p_n}\cdot v \in H_2(-X_S;\mathbb{Z})\otimes \mathbb{Q}.
\]
Note that we can view \eqref{eq:embedding_iota} as a lattice embedding \[
\iota\colon \bigoplus_{i=1}^n Q_{X(p_i,p_i-q_i)}\oplus \langle -p_1\cdots p_n\rangle \longrightarrow -\mathbb{Z}^N.
\]
In summary, we have shown that there must exist such a lattice embedding $\iota$ such that \[
\sum_{i=1}^n \sum_{j=1}^{s_i} y_j(p_i,p_i-q_i)\cdot \iota (v_{i,j} ) -\frac{ \sqrt{p_1\cdots p_n K_S^2}}{p_1\cdots p_n}\cdot  \iota(v)  =\epsilon_1e_1+\cdots+3\epsilon_{i_0}e_{i_0}+\cdots+\epsilon_Ne_N
\]
with $1\leq i_0\leq N$ and $\epsilon_i\in \{\pm 1\}$. By applying an automorphism of $-\mathbb{Z}^N$, we can actually assume that \[
\sum_{i=1}^n \sum_{j=1}^{s_i} y_j(p_i,p_i-q_i)\cdot \iota (v_{i,j} ) -\frac{ \sqrt{p_1\cdots p_n K_S^2}}{p_1\cdots p_n}\cdot  \iota(v)  =3e_1+e_2+\cdots+e_N.
\]
We summarize this discussion in the following theorem. 
\end{remark}

\begin{theorem}
     Let $S$ be a rational homology projective plane with $H_1(S^0;\mathbb{Z})=0$, having $n$ cyclic quotient singularities of types 
\[
    \tfrac{1}{p_1}(1,q_1),\dots,\tfrac{1}{p_n}(1,q_n).
\]
Assume that, for each $i=1,\dots,n$, either $p_i$ is odd or $\sigma(q_i,p_i,+1)\neq \sigma (q_i,p_i,-1)$. Then there exists a full-rank lattice embedding \[
\iota\colon \bigoplus_{i=1}^n Q_{X(p_i,p_i-q_i)}\oplus \langle -p_1\cdots p_n\rangle \longrightarrow -\mathbb{Z}^N,
\]
where $N=\sum_{i=1}^n b_2(X(p_i,p_i-q_i))+1$, such that     \[
\sum_{i=1}^n \sum_{j=1}^{s_i} y_j(p_i,p_i-q_i)\cdot \iota (v_{i,j} ) -\frac{ \sqrt{p_1\cdots p_n K_S^2}}{p_1\cdots p_n}\cdot  \iota(v)  =3e_1+e_2+\cdots +e_N.
\] 
Here, $\{v_{i,1},\dots ,v_{i,s_i}\}$ is the standard basis for $Q_{X(p_i,p_i-q_i)}$ with $s_i=b_2(X(p_i,p_i-q_i))$, and $v$ is a generator of the lattice $\langle -p_1\cdots p_n\rangle $. 
\end{theorem}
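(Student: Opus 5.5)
This theorem is a repackaging of the construction in the proof of Theorem \ref{thm:main}, so the plan is to rerun that proof and keep track of explicit coordinates. Form the closed manifold $W_S=X'\cup_f(-X_S)$ exactly as before, using the gluing map $f=\coprod f_{p_i,q_i}$. Claim \ref{claim:spin^c} then gives a global $\operatorname{spin}^c$ structure $\mathfrak{s}$ with $\mathfrak{s}|_{X'}=\mathfrak{s}'$ and $\mathfrak{s}|_{-X_S}=\mathfrak{s}_{K_{X_S}}$. Proposition \ref{prop:spin^c_gluing} produces a full-rank embedding
\[
\iota\colon Q_{X'}\oplus Q_{-X_S}\to -\mathbb{Z}^N .
\]
By Lemma \ref{lem:topology_of_QHCP2}, $Q_{-X_S}\cong\langle -p_1\cdots p_n\rangle$, so the domain is the lattice in the statement. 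The computation of the previous section shows that $\iota_{\mathbb Q}(c_1(\mathfrak s')+K_{X_S})=\mathbf{K}$, where $\mathbf K$ has $N-1$ coordinates equal to $\pm1$ and one equal to $\pm3$.

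Next I express both summands of $c_1(\mathfrak{s}')+K_{X_S}$ in the rational span of the homology bases. For the plumbing part, Remark \ref{rmk:K_p,q_in_terms_of_v_i} gives
\[
K_{p_i,p_i-q_i}=\sum_j y_j(p_i,p_i-q_i)\,v_{i,j}.
\]
This is just the statement that pairing with each $v_{i,j}$ reproduces the evaluations $a_{i,j}-2$, with $a_{i,s_i}$ in place of $a_{i,s_i}-2$ for the last entry, which is the definition \eqref{eq:def_of_K_p,q}.

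For the $-X_S$ part, let $v$ generate $H_2(-X_S;\mathbb Z)$, so $v^2=-p_1\cdots p_n$. Its dual is $v^*=-v/(p_1\cdots p_n)$. By the proof of Proposition \ref{prop:K_X_S}, $K_{X_S}=mv^*$ with $m^2=p_1\cdots p_nK_{X_S}^2=p_1\cdots p_nK_S^2$. Replacing $v$ by $-v$ if necessary, I take $m=\sqrt{p_1\cdots p_nK_S^2}$, which yields the coefficient $-\sqrt{p_1\cdots p_nK_S^2}/(p_1\cdots p_n)$ in front of $v$. Substituting into $\iota_{\mathbb Q}$ gives the displayed identity, except that the right-hand side is still $\sum\epsilon_ie_i$ with a single coefficient $3\epsilon_{i_0}$.

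The last step normalizes the right-hand side. I compose $\iota$ with the automorphism of $-\mathbb Z^N$ that sends $e_k\mapsto\epsilon_ke_k$ and then swaps $e_{i_0}$ with $e_1$. Both operations are lattice isometries of $-\mathbb{Z}^N$, so the composite is still a full-rank embedding, and it carries $\mathbf K$ to $3e_1+e_2+\cdots+e_N$. The only point needing care is the sign and normalization of $v^*$: the dual is taken inside the negative definite form on $-X_S$, and the sign of $m$ is absorbed by the choice of generator $v$. I do not expect a genuine obstacle, since everything else is already established in the proof of Theorem \ref{thm:main}.
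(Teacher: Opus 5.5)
Your proposal is correct and follows essentially the same route as the paper: the paper also reruns the construction from the proof of Theorem~\ref{thm:main}. It expands $K_{p_i,p_i-q_i}=\sum_j y_j(p_i,p_i-q_i)\,v_{i,j}$ via Remark~\ref{rmk:K_p,q_in_terms_of_v_i} and writes $K_{X_S}=\sqrt{p_1\cdots p_nK_S^2}\,v^*=-\frac{\sqrt{p_1\cdots p_nK_S^2}}{p_1\cdots p_n}\,v$ after fixing the sign of $v$, then applies a signed permutation of $-\mathbb{Z}^N$ to bring $\mathbf{K}$ to $3e_1+e_2+\cdots+e_N$.
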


Note that the image $\iota(v)$ must be a primitive vector in $-\mathbb{Z}^N$. The following example serves as a sanity check for this theorem.

\begin{example}[Weighted projective planes]\label{ex:weighted_projective_planes} For pairwise relatively prime integers $1\leq a\leq b\leq c$, the \emph{weighted projective plane} $\mathbb{CP}(a,b,c)$ is defined as the quotient of $\mathbb{C}^3-\{0\}$ under the action of $\mathbb{C}^*=\mathbb{C}-\{0\}$ given by \[
t\cdot (z_1,z_2,z_3) =\left(t^a z_1, t^bz_2,t^cz_3 \right) \quad (t\in \mathbb{C}^*, \, (z_1,z_2,z_3)\in \mathbb{C}^3-\{0\}). 
\]
The open subset $\{[z_1,z_2,z_3]\in \mathbb{CP}(a,b,c):z_1\neq 0\}$ is isomorphic to the quotient space \[
\frac{\mathbb{C}^2}{\mathbb{Z}_a} =\frac{\mathbb{C}^2}{(z,w)\sim (e^{2\pi i b/a}z, e^{2\pi i c/a}w)}
\]
via the correspondence $[1,z,w]\mapsto [z,w]$. Thus, if $a>1$, then the point $[1,0,0]\in \mathbb{CP}(a,b,c)$ is a cyclic quotient singularity of type $\tfrac{1}{a}(1,b'c)$, where $b'$ is the multiplicative inverse of $b$ modulo $a$, and $b'c$ is evaluated modulo $a$. (If $a=1$, then $[1,0,0]$ is not a singular point.) Similarly, the points $[0,1,0]$ and $[0,0,1]$ are cyclic quotient singularities (assuming $b,c>1$) of types $\tfrac{1}{b}(1,c'a)$ and $\tfrac{1}{c}(1,a'b)$, respectively, where $cc'\equiv 1 \pmod{b}$ and $aa' \equiv 1 \pmod{c}$. Thus, $\mathbb{CP}(a,b,c)$ has at most three cyclic quotient singularities.

Observe that $\mathbb{CP}(a,b,c)$ can also be defined as the quotient of $S^5\subset \mathbb{C}^3$ under the $S^1$-action given by the same formula as above. This action is free on the complement of the orbits over the singular points, which implies that there is an $S^1$-bundle over the smooth locus of $\mathbb{CP}(a,b,c)$ whose total space is the complement of the union of (at most three) circles in $S^5$. Using the corresponding homotopy exact sequence, we see that the smooth locus of $\mathbb{CP}(a,b,c)$ is simply-connected. Then the homology groups of the smooth locus can be computed using the corresponding Leray-Serre spectral sequence, from which it easily follows that $\mathbb{CP}(a,b,c)$ is a $\mathbb{Q}$-homology $\mathbb{CP}^2$. It is also well-known that  \[
K_{\mathbb{CP}(a,b,c)}^2 = \frac{(a+b+c)^2}{abc}.
\] 

As an example, consider $S=\mathbb{CP}(2,3,5)$, which has three cyclic quotient singularities of types $\tfrac{1}{2}(1,1)$, $\tfrac{1}{3}(1,1)$, and $\tfrac{1}{5}(1,4)$. Let us verify that there is a lattice embedding as in Theorem \ref{thm:main}. Consider the standard basis $\{v_{1,1},v_{2,1},v_{2,2},v_{3,1}\}$ of $Q_{X(2,1)}\oplus Q_{X(3,2)}\oplus Q_{X(5,1)}$, and let $v$ be a generator of the lattice $\langle -2\cdot 3\cdot 5\rangle = \langle -30\rangle$. We have \[
y_1(2,1) = -1, \quad y_1(3,2)=-\frac{2}{3}, \quad y_2(3,2)=-\frac{4}{3}, \quad y_1(5,1)=-1, \quad K_S^2=\frac{10^2}{30}.
\]
Therefore, we need to show that there is a lattice embedding \[
\iota\colon Q_{X(2,1)}\oplus Q_{X(3,2)}\oplus Q_{X(5,1)} \oplus  \langle -30\rangle \longrightarrow -\mathbb{Z}^5
\]
such that \[
-\iota(v_{1,1})-\frac{2}{3}\iota(v_{2,1})-\frac{4}{3}\iota(v_{2,2})-\iota(v_{3,1})-\frac{10}{30}\iota(v)=3e_1+e_2+e_3+e_4+e_5.
\]
Such a lattice embedding $\iota$ is depicted in Figure \ref{fig:lattice_embedding_example}.  
\end{example} 

\begin{figure}[!th]
\centering
\begin{tikzpicture}[scale=1]
\draw (0,0) node[circle, fill, inner sep=1.2pt, black]{};
\draw (-1,-1.4) node[circle, fill, inner sep=1.2pt, black]{};
\draw (1,-1.4) node[circle, fill, inner sep=1.2pt, black]{};
\draw (0,-2.8) node[circle, fill, inner sep=1.2pt, black]{};
\draw (6,-1.4) node{$*$};

\draw (0,0) node[above]{$-2$};
\draw (-1,-1.4) node[above]{$-2$};
\draw (1,-1.4) node[above]{$-2$};
\draw (0,-2.8) node[above]{$-5$};
\draw (6,-1.4) node[above]{$-30$};

\draw (-1,-1.4)--(1,-1.4) ;

\draw (0,0) node[below]{$\iota(v_{1,1})=-e_1+e_4$};
\draw (-1.7,-1.4) node[below]{$\iota(v_{2,1})=-e_2+e_3$};
\draw (1.7,-1.4) node[below]{$\iota(v_{2,2})=-e_3-e_5$};
\draw (0,-2.8) node[below]{$\iota(v_{3,1})=-e_1-e_2-e_3-e_4+e_5$};
\draw (6.5,-1.4) node[below]{$\iota(v)=-3e_1+2e_2+2e_3-3e_4-2e_5$};
\end{tikzpicture}
\caption{A lattice embedding $\iota\colon Q_{X(2,1)}\oplus Q_{X(3,2)}\oplus Q_{X(5,1)} \oplus  \langle -30\rangle \longrightarrow -\mathbb{Z}^5$}
\label{fig:lattice_embedding_example}
\end{figure}

\section{Proof of Theorem \ref{thm:aMY}}\label{sec:main_result_proof}
In this section, we prove the main result of the paper, Theorem \ref{thm:aMY}, by applying the lattice-theoretic constraint established in Theorem \ref{thm:main}. Suppose, for contradiction, that there exists a rational homology projective plane $S$ with $H_1(S^0;\mathbb{Z})=0$ and ample canonical divisor $K_S$, having exactly four cyclic quotient singularities of types 
\[
    \tfrac{1}{2}(1,1), \quad \tfrac{1}{3}(1,1), \quad \tfrac{1}{5}(1,q_3), \quad \tfrac{1}{n}(1,q_4),
\]
where $\gcd(n,30)=1$ and $\gcd(5,q_3)=\gcd(n,q_4)=1$. We may assume that $q_3\in \{1,3,4\}$, since the cyclic quotient singularities of types $\tfrac{1}{5}(1,2)$ and $\tfrac{1}{5}(1,3)$ are isomorphic. Theorem \ref{thm:main} then implies that there exists a codimension-one lattice embedding 
\begin{equation}\label{eq:main_embedding}
    \iota\colon Q_{X(2,1)}\oplus Q_{X(3,2)}\oplus Q_{X(5,5-q_3)}\oplus Q_{X(n,n-q_4)} \longrightarrow -\mathbb{Z}^N,
\end{equation}
where 
\[
    N=b_2(X(2,1))+b_2(X(3,2))+b_2(X(5,5-q_3))+b_2(X(n,n-q_4))+1,
\] 
satisfying the following conditions: 
\begin{enumerate}[label=\textup{(\alph*)}]
    \item There exists a vector $\mathbf{K}\in -\mathbb{Z}^N$ of the form \begin{equation}\label{eq:vector_K}
        \mathbf{K}=\epsilon_1e_1+\cdots+3\epsilon_{i_0}e_{i_0}+\cdots +\epsilon_Ne_N \quad (1\leq i_0\leq N, \, \epsilon_i\in \{\pm 1\})
    \end{equation}
    such that for any standard basis vector $v$ of  $Q_{X(2,1)}\oplus Q_{X(3,2)}\oplus Q_{X(5,5-q_3)}\oplus Q_{X(n,n-q_4)}$,  \begin{equation}\label{eq:property_of_K}
         \mathbf{K}\cdot \iota(v)=\begin{cases}
        -v^2 & \text{if $v$ is right-end}, \\
        -v^2-2 & \text{otherwise}.
    \end{cases}
    \end{equation}
    \item \label{item:orthogonal_complement} The orthogonal complement of the image of $\iota$ is generated by a vector in $-\mathbb{Z}^N$ of square $-2\cdot 3\cdot 5\cdot n=-30n$. 
\end{enumerate} 
Throughout this section, $\iota$ will denote a lattice embedding \eqref{eq:main_embedding} satisfying these conditions. Our goal is to derive a contradiction, showing that such an embedding cannot exist; the remainder of the proof is purely combinatorial.

\subsection{Setup and basic identities}
Note that we have the following continued fraction expansions: \[
\frac{2}{1}=[2], \quad \frac{3}{2}=[2,2], \quad \frac{5}{5-q_3}=\begin{cases}
  [2,2,2,2] & \text{if } q_3=1, \\
  [3,2] & \text{if } q_3=3, \\
  [5] & \text{if } q_3=4.
\end{cases}
\]
Let $\{u_1\}$ and $\{u_2,u_3\}$ denote the standard bases for $Q_{X(2,1)}$ and $Q_{X(3,2)}$, respectively. For $Q_{X(5,5-q_3)}$, we denote its standard basis as follows: 
\[
\begin{cases}
    \{u_4,u_5,u_6,u_7\} & \text{if } q_3=1, \\
    \{u_4,u_5\} & \text{if } q_3=3, \\
    \{u_4\} & \text{if } q_3=4.
\end{cases}
\]
We begin by classifying the possible images of these vectors in $-\mathbb{Z}^N$, which will guide the case-by-case analysis later in this section.

\begin{lemma}\label{lem:6.1}
   Up to an automorphism of $-\mathbb{Z}^N$, we have 
   \[
        \iota(u_1)=e_1-e_2,\quad  \iota(u_2)=e_3-e_4,\quad  \iota(u_3)=e_4-e_5,
   \] and 
   \begin{align}
       &(\iota(u_4),\iota(u_5),\iota(u_6),\iota(u_7))=(e_6-e_7,e_7-e_8,e_8-e_9,e_9-e_{10}) & \text{if $q_3=1$}, \nonumber \\
       & (\iota(u_4),\iota(u_5))=(e_1+e_2-e_6,e_6-e_7) \quad \text{or} \quad (e_6+e_7-e_8,e_8-e_9) & \text{if } q_3=3, \nonumber \\
       & \iota(u_4)=\begin{cases}
            e_1+e_2+e_3+e_4+e_5 \\
            e_1+e_2-e_3-e_4-e_5 \\
            e_3+e_4+e_5+e_6+e_7 \\
            -e_3-e_4-e_5+e_6+e_7 \\
            2e_6+e_7 \\
            e_1+e_2+e_6+e_7+e_8 \\
            e_6+e_7+e_8+e_9+e_{10} 
        \end{cases} & \text{if } q_3=4.  \label{eq:iota(u_4)_for_q_3=4}
   \end{align} 
\end{lemma}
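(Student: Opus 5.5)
We classify the images of the short chains one by one, using the elementary structure of $-\mathbb{Z}^N$. The basic facts are these. A vector of square $-2$ in $-\mathbb{Z}^N$ has the form $\pm e_i\pm e_j$ with $i\neq j$. A vector of square $-5$ has the form $\pm e_i\pm e_j\pm e_k\pm e_l\pm e_m$ (five distinct indices) or $\pm 2e_i\pm e_j$. The automorphism group of $-\mathbb{Z}^N$ consists of signed permutations of the $e_i$. Orthogonality between the four summands, together with $u_i\cdot u_{i+1}=1$ inside each chain, pins down the supports.

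\emph{The $(-2)$-chains.} First $\iota(u_1)$ has square $-2$, so after a signed permutation $\iota(u_1)=e_1-e_2$. Next, $\iota(u_2)$ and $\iota(u_3)$ have square $-2$, are orthogonal to $\iota(u_1)$, and satisfy $\iota(u_2)\cdot\iota(u_3)=1$. Orthogonality to $e_1-e_2$ leaves two options for $\iota(u_2)$: either it is disjoint from $\{e_1,e_2\}$, or it is $\pm(e_1+e_2)$.

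I would rule out $\iota(u_2)=\pm(e_1+e_2)$. In that case $\iota(u_3)$ must meet $\iota(u_2)$ in exactly one coordinate with the correct sign while staying orthogonal to $e_1-e_2$. This is impossible: a square $-2$ vector orthogonal to $e_1-e_2$ that touches $e_1$ or $e_2$ must be $\pm(e_1+e_2)$, and that pairs with $\iota(u_2)$ to $\pm2$, not $1$. So the supports of $\iota(u_2),\iota(u_3)$ avoid $e_1,e_2$. They share exactly one index, so we may normalize to $e_3-e_4$ and $e_4-e_5$.

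A $(-2)$-chain is rigid, so the $q_3=1$ case follows the same way. We show $\iota(u_4),\ldots,\iota(u_7)$ cannot use indices $1$ through $5$. A square $-2$ vector orthogonal to $\langle e_3-e_4,e_4-e_5\rangle$ that touches $\{3,4,5\}$ does not exist, since such a vector would have to be constant on those three coordinates. Using $\{1,2\}$ would force $\pm(e_1+e_2)$, and the next vector in the chain could not attach to it, as before. Hence the chain lives on fresh indices $6,\ldots,10$. The endpoint $u_7$ needs separate care, since a chain end might in principle wrap back, but the same argument applies.

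\emph{The case $q_3=3$, chain $[-3,-2]$.} Here $\iota(u_4)$ has square $-3$, so it is $\pm e_a\pm e_b\pm e_c$, and $\iota(u_5)$ has square $-2$ with $\iota(u_4)\cdot\iota(u_5)=1$. Orthogonality to $e_1-e_2$ forces the support of $\iota(u_4)$ to contain both $1,2$ with equal signs, or neither. Orthogonality to $e_3-e_4$ and $e_4-e_5$ forces it to contain all of $3,4,5$ with equal signs, or none. Since there are only three entries, the possibilities are $\pm(e_3+e_4+e_5)$, $\pm(e_1+e_2)\pm e_6$, or a vector on fresh indices.

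The option $\pm(e_3+e_4+e_5)$ is excluded because $\iota(u_5)$ could not attach to it. Indeed, $\iota(u_5)$ would then have to be orthogonal to $e_1-e_2$, $e_3-e_4$ and $e_4-e_5$ while touching $\{3,4,5\}$, which is impossible. The remaining options normalize to the two listed forms.

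\emph{The case $q_3=4$, single vector of square $-5$.} We enumerate the square $-5$ vectors subject to the same two orthogonality rules: the support contains $\{1,2\}$ with equal signs or neither, and contains $\{3,4,5\}$ with equal signs or none. For the five-entry type, the support can use both blocks, one block plus fresh indices ($1,2$ plus three fresh, or $3,4,5$ plus two fresh), or only fresh indices. For the type $\pm2e_i\pm e_j$, $i$ and $j$ cannot lie in either block, so this gives $2e_6+e_7$. After normalizing the remaining signs, which is possible because the global signs of the blocks are not yet fixed, this produces exactly the seven listed forms.

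\emph{Where the difficulty lies.} The only delicate point is bookkeeping the residual symmetries. Each time we fix an image, the stabilizer shrinks, and we must check that the sign normalizations made in the later steps are still available. For example, in $e_1+e_2-e_3-e_4-e_5$ the relative sign between the blocks cannot be removed, which is why both sign patterns appear in the list. I expect this symmetry tracking, rather than any conceptual step, to be the main place where errors could creep in.
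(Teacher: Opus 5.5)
Your approach is the same as the paper's, which justifies the lemma only by a finite enumeration of vectors with prescribed squares and pairings. Your analyses of $q_3=3$ and $q_3=4$ are correct. One wording point: only the sign of the block $e_1+e_2$ is still free, via $e_1\mapsto -e_2$, $e_2\mapsto -e_1$. The only signed permutation fixing both $e_3-e_4$ and $e_4-e_5$ is the identity on $e_3,e_4,e_5$, and this is exactly why both sign patterns of $e_3+e_4+e_5$ survive, as you correctly note at the end.

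The step that fails as stated is the $q_3=1$ case, which rests on the claim that ``a $(-2)$-chain is rigid''. This is false in general. The chain of length three has the non-standard embedding $e_6-e_7,\ e_7-e_8,\ -e_6-e_7$: consecutive pairings are $1$, the end vectors are orthogonal, and only three coordinates are used. So knowing that $\iota(u_4),\dots,\iota(u_7)$ avoid $e_1,\dots,e_5$ does not yet give the standard form, and nothing in your sketch excludes this configuration.

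The fix is to run the enumeration explicitly.
\begin{itemize}
\item Normalize $\iota(u_4)=e_6-e_7$.
\item Then $\iota(u_5)$ shares exactly one index with $\iota(u_4)$, since $\pm(e_6+e_7)$ pairs to $0$. It normalizes to $e_7-e_8$, using $e_6\mapsto -e_7$, $e_7\mapsto -e_6$ if it shares $e_6$.
\item Then $\iota(u_6)$ is either $e_8-e_9$ or $-e_6-e_7$.
\item The option $-e_6-e_7$ is excluded by $\iota(u_7)$. Being orthogonal to $e_6-e_7$ and $e_7-e_8$, the vector $\iota(u_7)$ has equal, hence zero, coefficients on $e_6,e_7,e_8$, so it pairs to $0$ with $-e_6-e_7$.
\item The same observation then forces $\iota(u_7)=e_9-e_{10}$.
\end{itemize}
In other words, the length-four chain is rigid only because of its fourth vector, and that is the point your argument has to supply.
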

\begin{proof}
    The possible images of the standard basis vectors are obtained by a finite enumeration of vectors with prescribed squares and pairings, in the same spirit as the proof of \cite[Theorems 3.1 and 3.2]{Jo-Park-Park-2025-01}. This elementary computation gives precisely the list above.
\end{proof}

Next, suppose that the continued fraction expansion of $n/(n-q_4)$ is given by \[
\frac{n}{n-q_4}=[a_1,\dots,a_s].
\]
We write $\{v_1,\dots,v_s\}$ for the standard basis of $Q_{X(n,n-q_4)}$. For a lattice embedding $\iota$ as in \eqref{eq:main_embedding}, note that \begin{equation}\label{eq:N_and_s}
     N=\begin{cases}
          s+8 & \text{if } q_3=1, \\
          s+6 & \text{if } q_3=3, \\
          s+5 & \text{if } q_3=4.
      \end{cases}
\end{equation}
Throughout this section, we use the following notation. 
\begin{itemize}
    \item We write $\mathbf{K}=K_1e_1+\cdots+K_Ne_N\in -\mathbb{Z}^N$ for the vector \eqref{eq:vector_K} satisfying \eqref{eq:property_of_K}. Thus, $K_1,\dots,K_N\in \{\pm1,\pm3\}$, and exactly one of them is equal to $\pm 3$. 
    \item We define the sum vectors $\mathbf{S}_1, \dots, \mathbf{S}_4\in -\mathbb{Z}^N$ corresponding to the four plumbing summands as follows: $\mathbf{S}_1=\iota(u_1)$, $\mathbf{S}_2=\iota(u_2)+\iota(u_3)$, \[
    \mathbf{S}_3=\begin{cases}
        \iota(u_4)+\iota(u_5)+\iota(u_6)+\iota(u_7) & \text{if } q_3=1, \\
        \iota(u_4)+\iota(u_5) & \text{if } q_3=3, \\
        \iota(u_4) & \text{if } q_3=4,
    \end{cases}
    \]
    and $\mathbf{S}_4=\iota(v_1)+\cdots+\iota(v_s)$. Note that the vectors $\mathbf{S}_1,\mathbf{S}_2,\mathbf{S}_3,\mathbf{S}_4$ are mutually orthogonal. We also define the total sum vector 
    \[
        \mathbf{T}=T_1e_1+\cdots +T_Ne_N:=\mathbf{S}_1+\mathbf{S}_2+\mathbf{S}_3+\mathbf{S}_4.
    \]
    \item We define the partial sum vectors for the fourth plumbing summand $Q_{X(n,n-q_4)}$ by
    \[
        \mathbf{P}_k= \iota(v_1)+\cdots +\iota(v_k) \quad (1\leq k\leq s).
    \]
    Note that $\mathbf{P}_s=\mathbf{S}_4$.
    \item We define 
    \[
        \mathbb{I}_4 = \sum_{i=1}^s (a_i-3) = \sum_{i=1}^s a_i -3s.
    \]
    Similarly, we put $\mathbb{I}_1=-1$, $\mathbb{I}_2=-2$, and 
    \[
        \mathbb{I}_3=
        \begin{cases}
            -4 & \text{if } q_3=1, \\
            -1 & \text{if } q_3=3, \\
            2 & \text{if } q_3=4.
        \end{cases}
    \]
    Finally, we put $\mathbb{I}_{\operatorname{total}}=\mathbb{I}_1+\mathbb{I}_2+\mathbb{I}_3+\mathbb{I}_4$. Note that \begin{equation}\label{eq:I_total}
         \mathbb{I}_{\operatorname{total}}=\begin{cases}
        \mathbb{I}_4-7 & \text{if $q_3=1$}, \\
        \mathbb{I}_4-4 & \text{if $q_3=3$}, \\
        \mathbb{I}_4-1 & \text{if $q_3=4$}.
    \end{cases}
    \end{equation}
\end{itemize}
With this notation established, we record several fundamental algebraic identities satisfied by these vectors.

\begin{lemma}\label{lem:6.2}
    We have the following identities: 
    \begin{enumerate}[label=\textup{(\alph*)}]
        \item $\mathbf{S}_i^2+\mathbf{S}_i\cdot \mathbf{K}=0$ for $i=1,2,3,4$. 
        \item $\mathbf{T}^2=-(\mathbb{I}_{\operatorname{total}}+N+7)$. 
        \item $\mathbf{T}^2+\mathbf{T}\cdot \mathbf{K}=0$. 
        \item $\mathbf{P}_k^2+\mathbf{P}_k\cdot \mathbf{K}=-2$ for $k=1,\dots,s-1$. 
        \item $(\mathbf{P}_\ell-\mathbf{P}_k)^2+(\mathbf{P}_\ell-\mathbf{P}_k)\cdot \mathbf{K}=-2$ for $1\leq k<\ell <s$.
        \item $\mathbf{P}_k\cdot (\mathbf{S}_4+\mathbf{K})=-1$ for $k=1,\dots,s-1$. 
    \end{enumerate}
\end{lemma}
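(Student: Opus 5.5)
The identities all follow from the defining property \eqref{eq:property_of_K} of $\mathbf{K}$, the shape of $\mathbf{K}$, and orthogonality of the summands; we verify them in the order (a), (d), (e), (f), (c), (b).

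\textbf{Part (a).} Take one plumbing summand with standard basis $w_1,\dots,w_m$, weights $-c_1,\dots,-c_m$, and $w_m$ right-end. By \eqref{eq:property_of_K},
\[
\mathbf{K}\cdot\iota(w_j)=c_j-2 \quad (j<m), \qquad \mathbf{K}\cdot\iota(w_m)=c_m.
\]
Summing gives
\[
\mathbf{S}\cdot\mathbf{K}=\sum_j c_j-2(m-1).
\]
Since consecutive vertices pair to $1$, we also have
\[
\mathbf{S}^2=-\sum_j c_j+2(m-1).
\]
Hence $\mathbf{S}^2+\mathbf{S}\cdot\mathbf{K}=0$, which gives (a) for each $i=1,2,3,4$.

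\textbf{Parts (d) and (e).} Apply the same computation to $\mathbf{P}_k$ with $k<s$. Now no right-end vector is included, so
\[
\mathbf{P}_k\cdot\mathbf{K}=\sum_{i\le k}a_i-2k, \qquad \mathbf{P}_k^2=-\sum_{i\le k}a_i+2(k-1).
\]
The sum of these is $-2$, which is (d). Part (e) is identical for the chain $v_{k+1},\dots,v_\ell$ with $\ell<s$.

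\textbf{Part (f).} Only the last vector of the chain $\mathbf{P}_k$ meets the rest of $\mathbf{S}_4$, and it meets $v_{k+1}$, so
\[
\mathbf{P}_k\cdot\mathbf{S}_4=\mathbf{P}_k^2+\iota(v_k)\cdot\iota(v_{k+1})=\mathbf{P}_k^2+1.
\]
Combining with (d) yields $\mathbf{P}_k\cdot(\mathbf{S}_4+\mathbf{K})=-1$.

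\textbf{Part (c).} The $\mathbf{S}_i$ are mutually orthogonal, so $\mathbf{T}^2=\sum_i\mathbf{S}_i^2$ and $\mathbf{T}\cdot\mathbf{K}=\sum_i\mathbf{S}_i\cdot\mathbf{K}$. Summing the identities in (a) gives (c).

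\textbf{Part (b).} By (a), each summand satisfies
\[
\mathbf{S}^2=-\sum_j c_j+2(m-1)=-\Bigl(\sum_j(c_j-3)+m+2\Bigr).
\]
For the fourth summand this equals $-(\mathbb{I}_4+s+2)$. For the first three, we check directly that $\mathbf{S}_1^2=-2=-(\mathbb{I}_1+1+2)$ and $\mathbf{S}_2^2=-2=-(\mathbb{I}_2+2+2)$. For $\mathbf{S}_3$:
\begin{itemize}
\item if $q_3=1$, $\mathbf{S}_3^2=-2=-(\mathbb{I}_3+4+2)$;
\item if $q_3=3$, $\mathbf{S}_3^2=-3=-(\mathbb{I}_3+2+2)$;
\item if $q_3=4$, $\mathbf{S}_3^2=-5=-(\mathbb{I}_3+1+2)$.
\end{itemize}
Summing over the four summands,
\[
\mathbf{T}^2=-\bigl(\mathbb{I}_{\operatorname{total}}+(N-1)+8\bigr)=-(\mathbb{I}_{\operatorname{total}}+N+7),
\]
using that the total rank of the four summands is $N-1$.

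No step is a real obstacle. The only care needed is the boundary bookkeeping: the right-end vector contributes $c_m$ rather than $c_m-2$ to the pairing with $\mathbf{K}$, and this is exactly why (d) and (e) require $\ell<s$.
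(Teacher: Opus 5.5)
Your proof is correct and follows essentially the same route as the paper. Both arguments compute $\mathbf{S}^2=-\sum c_j+2(m-1)$ against $\mathbf{S}\cdot\mathbf{K}=\sum c_j-2(m-1)$ (with the right-end correction), run the same computation on sub-chains with $\ell<s$ for (d)--(e), use $\mathbf{P}_k\cdot\mathbf{S}_4=\mathbf{P}_k^2+1$ for (f), and use orthogonality for (b)--(c); the only cosmetic difference is that in (b) you sum using the total rank $N-1$, whereas the paper splits by $q_3$ and invokes \eqref{eq:N_and_s} and \eqref{eq:I_total}.
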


\begin{proof}
    We first prove part \textup{(a)} in a uniform way. For each $i=1,2,3,4$, write the continued fraction expansion corresponding to the $i$-th plumbing summand as $[a_{i,1},\dots,a_{i,s_i}]$, and let $\{v_{i,1},\dots,v_{i,s_i}\}$ denote its standard basis. Thus
    \begin{align*}
        \mathbf{S}_i^2&= (v_{i,1}+\cdots+v_{i,s_i})^2 = \sum_{j=1}^{s_i} v_{i,j}^2+2\sum_{j=1}^{s_i-1} v_{i,j}\cdot v_{i,j+1} \\
        &= -\sum_{j=1}^{s_i} a_{i,j}+2(s_i-1) = -\sum_{j=1}^{s_i}(a_{i,j}-3)-s_i-2 = -(\mathbb{I}_i+s_i+2).
    \end{align*}
    On the other hand, by \eqref{eq:property_of_K},
    \begin{align*}
        \mathbf{S}_i\cdot \mathbf{K} &= \sum_{j=1}^{s_i} \mathbf{K}\cdot \iota(v_{i, j}) = \sum_{j=1}^{s_i-1} (-v_{i,j}^2-2)-v_{s_i}^2  \\
              &  =\sum_{j=1}^{s_i} a_{i, j} -2({s_i}-1)=\sum_{j=1}^{s_i} (a_{i,j}-3)+{s_i}+2 = \mathbb{I}_i+{s_i}+2.
    \end{align*}
    Therefore $\mathbf{S}_i^2+\mathbf{S}_i\cdot\mathbf{K}=0$ for each $i=1,2,3,4$, proving part \textup{(a)}.
    
    Next, since the vectors $\mathbf{S}_1,\dots,\mathbf{S}_4$ are mutually orthogonal, the above computations give
    \[
        \mathbf{T}^2=\mathbf{S}_1^2+\mathbf{S}_2^2+\mathbf{S}_3^2+\mathbf{S}_4^2=
        \begin{cases}
           -(\mathbb{I}_4+s+8) & \text{if $q_3=1$}, \\
           -(\mathbb{I}_4+s+9) & \text{if $q_3=3$}, \\
           -(\mathbb{I}_4+s+11) & \text{if $q_3=4$}.
        \end{cases}
    \]
    Part \textup{(b)} now follows from \eqref{eq:N_and_s} and \eqref{eq:I_total}. Moreover, using again the mutual orthogonality of the $\mathbf{S}_i$'s and part \textup{(a)}, we obtain part \textup{(c)}.
      
    Next, note that for $0\leq r<\ell<s$, we have
    \begin{align*}
        \iota(v_{r+1}+\cdots+v_\ell)^2+\iota(v_{r+1}+\cdots+v_\ell)\cdot \mathbf{K} & = (v_{r+1}+\cdots+v_\ell)^2 +\sum_{i=r+1}^\ell \mathbf{K}\cdot \iota(v_i) \\
        & = \sum_{i=r+1}^\ell v_i^2 +2(\ell-r-1)+\sum_{i=r+1}^\ell (-v_i^2-2)= -2,
    \end{align*}
    Taking $r=0$ proves part \textup{(d)}, and taking $1\leq r<\ell<s$ proves part \textup{(e)}. Finally, for part \textup{(f)}, note that
    \[
        \mathbf{P}_k\cdot \mathbf{S}_4=\iota(v_1+\cdots +v_k)\cdot \iota(v_1+\cdots +v_s) = \mathbf{P}_k^2+1,
    \]
    Therefore, by part \textup{(d)},
    \[
        \mathbf{P}_k\cdot(\mathbf{S}_4+\mathbf{K}) = \mathbf{P}_k^2+\mathbf{P}_k\cdot\mathbf{K}+1 = -1,
    \]
    which proves part \textup{(f)}.
\end{proof}

Next, we apply the orbifold BMY inequality (Theorem \ref{thm:oBMY}) to obtain a constraint on the quantity $\mathbb{I}_{\operatorname{total}}$.
\begin{lemma}\label{lem:6.3}
    We have
    \[
        \mathbb{I}_{\operatorname{total}}=
        \begin{cases}
            -2 \text{ or } -1 & \text{if } q_3=1 \text{ or } 3, \\
            -1 \text{ or } 0 & \text{if } q_3=4.
        \end{cases}
    \]
\end{lemma}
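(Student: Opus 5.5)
The plan is to express $K_S^2$ as an affine function of $\mathbb{I}_{\operatorname{total}}$ plus a bounded correction. Then I will squeeze it between $K_S^2>0$ (ampleness of $K_S$) and the orbifold BMY inequality (Theorem \ref{thm:oBMY}, which applies since an ample $K_S$ is nef).

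\emph{Step 1: rewrite $K_S^2$ in terms of the dual continued fractions.} Start from \eqref{eq:K_square_2}, $K_S^2=9+12\sum_i s(q_i,p_i)-2\sum_i(1-1/p_i)$. Use $s(q,p)=-s(p-q,p)$ together with Holzapfel's formula for $s(p-q,p)$, applied to the expansion $p/(p-q)=[a_1,\dots,a_s]$:
\[
12\,s(q_i,p_i)=-\frac{r_i+r_i'}{p_i}+3s_i-\sum_j a_{i,j}=-\frac{r_i+r_i'}{p_i}-\mathbb{I}_i .
\]
Here $r_i=p_i-q_i$, $r_ir_i'\equiv 1 \pmod{p_i}$, $0<r_i'<p_i$, and the second equality is the definition of $\mathbb{I}_i$. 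I will check that this matches the constants $\mathbb{I}_1=-1$, $\mathbb{I}_2=-2$ and the listed $\mathbb{I}_3$ directly on the expansions $[2]$, $[2,2]$, $[2,2,2,2]$, $[3,2]$, $[5]$. This gives
\[
K_S^2 = 9-\mathbb{I}_{\operatorname{total}}-\sum_{i=1}^4\Bigl(2+\frac{r_i+r_i'-2}{p_i}\Bigr).
\]
For $p_i=2,3,5$ the terms are explicit numbers depending on $q_3$. For the order-$n$ point I only know $0<(r_4+r_4'-2)/n<2$, with $r_4+r_4'=2$ exactly when $r_4=1$.

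\emph{Step 2: apply the two inequalities.} Ampleness gives $K_S^2>0$. BMY gives
\[
K_S^2\le 3e_{\operatorname{orb}}(S)=9-3\Bigl(\tfrac12+\tfrac23+\tfrac45+1-\tfrac1n\Bigr)=\tfrac{1}{10}+\tfrac{3}{n}.
\]
Substituting Step 1, each inequality becomes a bound on the integer $\mathbb{I}_{\operatorname{total}}$ in terms of the correction $\delta=(r_4+r_4'-2)/n\in[0,2)$ and the explicit constant from the $2,3,5$ points.

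\emph{Step 3: pin down the two values.} The upper bound ($K_S^2>0$) and the lower bound (BMY) differ by only about $1/10+3/n$ beyond the width of $\delta$. So, case by case in $q_3\in\{1,3,4\}$, I expect them to leave exactly two integers: $-2,-1$ for $q_3=1,3$ and $-1,0$ for $q_3=4$. If small $n$ (where $3/n$ is not negligible) threatens a third value, I will use two further inputs. First, by Proposition \ref{prop:K_X_S}, $30n\,K_S^2=m^2$ with $m$ a nonzero even integer. Second, the cases $n<2599$ are already excluded by \cite{Jo-Park-Park-2025-01}, so I may assume $n$ is large.

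The main obstacle is the bookkeeping in Step 3: the constants $(r_i+r_i'-2)/p_i$ for $p_i=2,3,5$ and the endpoint behaviour of $\delta$ must combine to leave exactly the stated two-element windows. In particular, the boundary case $r_4=1$ (so $\delta=0$) must be checked carefully against the strict inequality $K_S^2>0$.
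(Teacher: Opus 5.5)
Your proof is correct, and the overall strategy is the paper's: express $K_S^2$ through $\mathbb{I}_{\operatorname{total}}$, then squeeze it between $K_S^2>0$ and the orbifold BMY bound $\tfrac{1}{10}+\tfrac{3}{n}$. The route in Step 1 is different. The paper expands $n/q_4=[b_1,\dots,b_\ell]$, uses the formula for $K_S^2$ in terms of the original fractions $p_i/q_i$, and converts $\sum_j(b_j-3)$ into $\mathbb{I}_4$ via Lisca's identity $\mathbb{I}_4+\sum_j(b_j-3)=-2$. You instead use $s(q,p)=-s(p-q,p)$ and Holzapfel's formula on the dual fractions $p_i/(p_i-q_i)$. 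This lands directly on $K_S^2=c-\mathbb{I}_{\operatorname{total}}-\delta$ with $c=-\tfrac{13}{15},-\tfrac{4}{15},\tfrac13$ for $q_3=1,3,4$. Your route is slightly cleaner and avoids the duality lemma. The two expressions agree because $r_4=n-q_4$ and $r_4'=n-q_4'$.

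The real weakness is in Step 3. You only use $\delta\in[0,2)$, and that is exactly why you need the fallbacks. With $\delta<2$ alone, BMY gives only $\mathbb{I}_{\operatorname{total}}>c-\tfrac{21}{10}-\tfrac{3}{n}$. This does not exclude $\mathbb{I}_{\operatorname{total}}=-3$ when $q_3=1$ and $n<90$. It also does not exclude $\mathbb{I}_{\operatorname{total}}=-2$ when $q_3=4$ and $n\in\{7,11\}$.

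Falling back on the earlier paper's exclusion of $n<2599$ is logically legitimate. However, it makes the lemma depend on the $d$-invariant and linking-form machinery that this argument is meant to bypass. Your integrality fallback is never needed.

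The fix is one line. Since $r_4,r_4'\le n-1$, you have $\delta\le 2-\tfrac{4}{n}$; this is the paper's bound $(q_4+q_4'+2)/n\ge 4/n$. The $+\tfrac{4}{n}$ beats the $\tfrac{3}{n}$ from BMY and gives
\[
\mathbb{I}_{\operatorname{total}}\ge c-\tfrac{21}{10}+\tfrac{1}{n}>-\tfrac{89}{30},\ -\tfrac{71}{30},\ -\tfrac{53}{30}.
\]
Combined with $\mathbb{I}_{\operatorname{total}}<c$, which follows from $K_S^2>0$ and $\delta\ge 0$, this yields exactly the stated pairs for every $n$. The boundary case $\delta=0$ causes no trouble because $c\notin\mathbb{Z}$.
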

\begin{proof}
    By \eqref{eq:I_total}, it suffices to show that \[
    \mathbb{I}_4=\begin{cases}
        5\text{ or }6  & \text{if } q_3=1, \\
        2\text{ or }3 & \text{if } q_3=3, \\
        0\text{ or }1 & \text{if } q_3=4.
    \end{cases}
    \]
        Suppose that the continued fraction expansion of $n/q_4$ is given by \[
        \frac{n}{q_4}=[b_1,\dots,b_\ell]. 
        \]
    Note that we have the identity $\mathbb{I}_4+\sum_{j=1}^\ell (b_j-3)=-2$ \cite[Lemma 2.6]{Lisca-2007}, so the problem reduces to showing that \[
    \sum_{j=1}^\ell (b_j-3)=\begin{cases}
        -7\text{ or }{-8}  & \text{if $q_3=1$}, \\
        -4\text{ or }{-5} & \text{if $q_3=3$}, \\
        -2\text{ or }{-3} & \text{if $q_3=4$}.
    \end{cases}
    \]
    Since our surface $S$ has four singularities of types $\tfrac{1}{2}(1,1)$, $\tfrac{1}{3}(1,1)$, $\tfrac{1}{5}(1,q_3)$, and $\tfrac{1}{n}(1,q_4)$, the formula \eqref{eq:K_square} gives that \[
    K_S^2 =\begin{cases}
        \displaystyle\sum_{j=1}^\ell (b_j-3)+\frac{q_4+q_4'+2}{n}+\frac{92}{15} & \text{if $q_3=1$}, \\
         \displaystyle\sum_{j=1}^\ell (b_j-3)+\frac{q_4+q_4'+2}{n}+\frac{56}{15} & \text{if $q_3=3$}, \\
          \displaystyle\sum_{j=1}^\ell (b_j-3)+\frac{q_4+q_4'+2}{n}+\frac{4}{3} & \text{if $q_3=4$},
    \end{cases}
    \]
    where $q_4'$ denotes the unique integer satisfying $0<q_4'<n$ and $q_4q_4'\equiv 1 \pmod{n}$. On the other hand, the orbifold Euler characteristic of $S$ is given by \[
    e_{\operatorname{orb}}(S)=3-\left(1-\frac{1}{2}\right)-\left(1-\frac{1}{3}\right)-\left(1-\frac{1}{5}\right)-\left(1-\frac{1}{n}\right)=\frac{1}{30}+\frac{1}{n}.
    \]
    Since $K_S$ is ample, we have $K_S^2>0$. Therefore, the orbifold BMY inequality implies that 
    \[
    0<K_S^2\leq 3e_{\operatorname{orb}}(S)=\frac{1}{10}+\frac{3}{n}.
    \]
    Finally, observe that 
    \[
        \frac{4}{n}\leq \frac{q_4+q_4'+2}{n} \leq 2
    \]
    since $1\leq q_4,q_4'\leq n-1$. Combining this with the above bounds on $K_S^2$ and using the fact that $\sum_{j=1}^\ell(b_j-3)$ is an integer, we obtain the desired possibilities for $\sum_{j=1}^\ell(b_j-3)$, and hence for $\mathbb{I}_4$.
\end{proof}

The next lemma shows that $\mathbb{I}_{\operatorname{total}}$ determines the possible forms of the total sum vector $\mathbf{T}$. Note that the standard basis vectors $e_i$ appearing in the lemma are not necessarily the same as those in Lemma \ref{lem:6.1}. We explain this point immediately after the lemma.
\begin{lemma}\label{lem:6.4}
    After applying an automorphism of $-\mathbb{Z}^N$, we may assume that 
    \[
        \mathbf{K}=3e_1+e_2+\cdots+e_N,
    \]
    and that the total sum vector $\mathbf{T}$ is one of the following:
    \[
    \mathbf{T} = \begin{cases}
        \begin{array}{l}
            -3e_1-e_2-\cdots-e_{N-1} 
        \end{array}& \text{if } \mathbb{I}_{\operatorname{total}}=0,\\[1ex]
        \begin{array}{l}
            -3e_1-e_2-\cdots-e_{N-2} \\
            \text{or } -2e_1-2e_2-e_3-\cdots-e_N
        \end{array}& \text{if } \mathbb{I}_{\operatorname{total}}=-1, \\[3ex]
        \begin{array}{l}
            -3e_1-e_2-\cdots-e_{N-3} \\
            \text{or } -2e_1-2e_2-e_3-\cdots-e_{N-1}
        \end{array} & \text{if } \mathbb{I}_{\operatorname{total}}=-2.
    \end{cases}
    \]
\end{lemma}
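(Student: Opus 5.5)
First normalize $\mathbf{K}$. The automorphism group of $-\mathbb{Z}^N$ contains all signed permutations of the basis. Permuting the coordinates and flipping signs, we move the unique coefficient $\pm 3$ of $\mathbf{K}$ to position $1$ and make every coefficient positive. This gives $\mathbf{K}=3e_1+e_2+\cdots+e_N$. Only permutations of $e_2,\dots,e_N$ remain free, and we keep them for later.

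Next, combine parts (b) and (c) of Lemma \ref{lem:6.2}. Write $\mathbf{T}=\sum T_ie_i$, so that $\mathbf{T}^2=-\sum T_i^2$ and $\mathbf{T}\cdot\mathbf{K}=-(3T_1+T_2+\cdots+T_N)$. Part (c) becomes
\[
\sum_{i=1}^N T_i^2+3T_1+\sum_{i=2}^N T_i=0 .
\]
Completing squares turns this into
\[
(2T_1+3)^2+\sum_{i=2}^N (2T_i+1)^2 = 9+(N-1)=N+8 .
\]
Part (b) gives $\sum T_i^2=\mathbb{I}_{\operatorname{total}}+N+7$. By Lemma \ref{lem:6.3}, $\mathbb{I}_{\operatorname{total}}\in\{-2,-1,0\}$, so $\sum T_i^2\in\{N+5,N+6,N+7\}$.

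Now classify the integer solutions of the completed-square equation. Each $2T_i+1$ is odd, so each $(2T_i+1)^2$ is $1,9,25,\dots$, and likewise $(2T_1+3)^2$ is an odd square. There are $N$ odd squares summing to $N+8$, so the excess $\sum\big((\text{odd})^2-1\big)$ equals $8$. Each term $(\text{odd})^2-1$ is $0$ or at least $8$, so exactly one term equals $9$ and all others equal $1$. This leaves two cases.

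In the first case $(2T_1+3)^2=9$, so $T_1\in\{0,-3\}$, and $T_i\in\{0,-1\}$ for $i\ge 2$. In the second case $(2T_1+3)^2=1$, so $T_1\in\{-1,-2\}$, and exactly one index $j\ge2$ has $T_j\in\{1,-2\}$, while the remaining $T_i\in\{0,-1\}$.

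Then impose the value of $\sum T_i^2$ and permute $e_2,\dots,e_N$. Suppose $m$ of the coordinates $T_i$ with $i\ge2$ (outside the special index $j$, in the second case) equal $-1$.

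In the first case, $T_1=0$ gives $\sum T_i^2=m\le N-1$, which is too small. So $T_1=-3$ and $9+m=\mathbb{I}_{\operatorname{total}}+N+7$, i.e. $m=N-2+\mathbb{I}_{\operatorname{total}}$. This yields $-3e_1-e_2-\cdots-e_{N-1}$, $-3e_1-e_2-\cdots-e_{N-2}$ and $-3e_1-e_2-\cdots-e_{N-3}$ for $\mathbb{I}_{\operatorname{total}}=0,-1,-2$ respectively.

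In the second case, $\sum T_i^2\le 4+4+(N-2)=N+6$. So $\mathbb{I}_{\operatorname{total}}=0$ is impossible here. The subcases with $T_1=-1$ or $T_j=1$ give $\sum T_i^2\le N+3$, which is too small, so $T_1=T_j=-2$. Then $m=N-2$ for $\mathbb{I}_{\operatorname{total}}=-1$ and $m=N-3$ for $\mathbb{I}_{\operatorname{total}}=-2$. After permuting so that $j=2$, this gives the two listed vectors $-2e_1-2e_2-e_3-\cdots-e_N$ and $-2e_1-2e_2-e_3-\cdots-e_{N-1}$.

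The main obstacle is only bookkeeping: checking that each inequality excludes the right subcases. There is no conceptual difficulty, and I expect the argument to be a short elementary computation.
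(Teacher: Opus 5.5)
Your proposal is correct and follows the paper's proof. You normalize $\mathbf{K}$ by a signed permutation, combine Lemma \ref{lem:6.2}(b) and (c), split into cases on $T_1$, and then use permutations of $e_2,\dots,e_N$ to place the zero and $-2$ entries. Your completed-square identity $(2T_1+3)^2+\sum_{i\ge 2}(2T_i+1)^2=N+8$ is only a compact repackaging of the paper's observation that $T_i^2+T_i\ge 0$ forces $T_1\in\{0,-1,-2,-3\}$ and $\sum_{i\ge 2}(T_i^2+T_i)\in\{0,2\}$.
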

\begin{proof}
    Since $\mathbf{K}$ is of the form \eqref{eq:vector_K}, after applying an automorphism of $-\mathbb{Z}^N$, we may assume $\mathbf{K}=3e_1+e_2+\cdots+e_N$. By Lemma \ref{lem:6.2}(b), we have 
    \begin{equation}\label{eq:T_square}
        \sum_{i=1}^N T_i^2 = -\mathbf{T}^2= \mathbb{I}_{\operatorname{total}}+N+7=
        \begin{cases}
        N+7 & \text{if $\mathbb{I}_{\operatorname{total}}=0$}, \\
        N+6 & \text{if $\mathbb{I}_{\operatorname{total}}=-1$}, \\
        N+5 & \text{if $\mathbb{I}_{\operatorname{total}}=-2$}.
        \end{cases}
    \end{equation}
    On the other hand, Lemma \ref{lem:6.2}\textup{(c)} gives
    \begin{align*}
       0=-(\mathbf{T}^2+\mathbf{T}\cdot \mathbf{K})=\sum_{i=1}^N T_i^2 +3T_1+\sum_{i=2}^N T_i=(T_1^2+3T_1)+\sum_{i=2}^N (T_i^2+T_i).
    \end{align*}
    Since $T_i^2+T_i\geq 0$ for every integer $T_i$, we must have $T_1^2+3T_1\leq 0$, and hence $T_1\in\{0,-1,-2,-3\}$. 
    
    If $T_1=0$, then $T_i\in \{0,-1\}$ for all $i\geq 2$. Hence, $\sum_{i=1}^N T_i^2 \leq N-1$, contradicting \eqref{eq:T_square}. If $T_1=-1$, then $\sum_{i=2}^N (T_i^2+T_i) = 2$. Thus exactly one $T_k$ with $k \geq 2$ is either $1$ or $-2$, while $T_i \in \{0,-1\}$ for all other $i \geq 2$. It follows that $\sum_{i=1}^N T_i^2 \leq  N+3$ again contradicting \eqref{eq:T_square}.
   
    Suppose next that $T_1=-2$. Then exactly one $T_k$ with $k \geq 2$ is either $1$ or $-2$, while $T_i \in \{0,-1\}$ for all other $i \geq 2$. By applying an automorphism swapping $e_2$ and $e_k$ which preserves $\mathbf{K}$, we may assume that $T_2\in \{-2,1\}$. If $T_2=1$, then $\sum_{i=1}^N T_i^2\leq N+3$, which is impossible by \eqref{eq:T_square}. Hence $T_2=-2$. In this case, $\sum_{i=1}^N T_i^2\leq N+6$, so the case $\mathbb{I}_{\operatorname{total}}=0$ cannot occur. If $\mathbb{I}_{\operatorname{total}}=-1$, then $T_3=\cdots=T_N=-1$, and hence $\mathbf{T}=-2e_1-2e_2-e_3-\cdots-e_N$. If $\mathbb{I}_{\operatorname{total}}=-2$, then exactly one $T_j$ with $j\geq 3$ is $0$, and $T_i=-1$ for all other $i\geq 3$. By swapping $e_j$ and $e_N$, we may assume that $T_N=0$, and hence $\mathbf{T}=-2e_1-2e_2-e_3-\cdots-e_{N-1}$. 

    Finally, suppose that $T_1=-3$. Then $T_i\in \{0,-1\}$ for all $i\geq 2$. If $\mathbb{I}_{\operatorname{total}}=0$, then $\sum_{i=1}^N T_i^2=N+7$ implies that exactly one $T_k$ with $k\geq 2$ is $0$ and $T_i=-1$ for all other $i\geq 2$. We may assume $k=N$, and hence $\mathbf{T}=-3e_1-e_2-\cdots-e_{N-1}$. If $\mathbb{I}_{\operatorname{total}}=-1$, then exactly two $T_j, T_k$ with $2\leq j<k$ are $0$ and $T_i=-1$ for all other $i\geq 2$. We may assume $(j,k)=(N-1,N)$, and hence $\mathbf{T}=-3e_1-e_2-\cdots-e_{N-2}$. If $\mathbb{I}_{\operatorname{total}}=-2$, then a similar argument gives $\mathbf{T}=-3e_1-e_2-\cdots-e_{N-3}$. 
\end{proof}

Once the images of the initial plumbing vectors have been normalized as in Lemma \ref{lem:6.1}, we cannot in general also put $\mathbf{K}$ in the normal form $\mathbf{K}=3e_1+e_2+\cdots+e_N$. Therefore, we will use the following reformulation of Lemma \ref{lem:6.4} in the case-by-case arguments.

\begin{remark}\label{rmk:6.5}
Observe that any automorphism of $-\mathbb{Z}^N$ is a composition of permutations of the basis vectors and sign changes $e_i\mapsto -e_i$. Consequently, Lemma \ref{lem:6.4} can be restated as follows. Suppose that the vector $\mathbf{K}$ satisfying \eqref{eq:property_of_K} is given by 
\[
    \mathbf{K}=\epsilon_1e_1+\cdots+3\epsilon_{i_0}e_{i_0}+\cdots +\epsilon_Ne_N \quad (1\leq i_0\leq N, \, \epsilon_i\in \{\pm 1\}).
\] 
Then the total sum vector $\mathbf{T}=\sum_{i=1}^NT_ie_i$ satisfies one of the following conditions: 
\begin{itemize}
    \item If $\mathbb{I}_{\operatorname{total}}=0$, then $T_{i_0}=-3\epsilon_{i_0}$, exactly one index $k\neq i_0$ satisfies $T_k=0$, and $T_i=-\epsilon_i$ for all remaining indices $i$.

    \item If $\mathbb{I}_{\operatorname{total}}=-1$, then $T_{i_0}\in\{-2\epsilon_{i_0},-3\epsilon_{i_0}\}$. 
    \begin{itemize} 
        \item if $T_{i_0}=-2\epsilon_{i_0}$, then exactly one index $k\neq i_0$ satisfies $T_k=-2\epsilon_k$, and $T_i=-\epsilon_i$ for all remaining indices $i$; 
        \item if $T_{i_0}=-3\epsilon_{i_0}$, then exactly two distinct indices $j,k\neq i_0$ satisfy $T_j=T_k=0$, and $T_i=-\epsilon_i$ for all remaining indices $i$. 
    \end{itemize}

    \item If $\mathbb{I}_{\operatorname{total}}=-2$, then $T_{i_0}\in\{-2\epsilon_{i_0},-3\epsilon_{i_0}\}$.
    \begin{itemize} 
        \item if $T_{i_0}=-2\epsilon_{i_0}$, then exactly one index $j\neq i_0$ satisfies $T_j=0$, exactly one index $k\notin\{i_0,j\}$ satisfies $T_k=-2\epsilon_k$, and $T_i=-\epsilon_i$ for all remaining indices $i$; \item if $T_{i_0}=-3\epsilon_{i_0}$, then exactly three distinct indices $j,k,\ell\neq i_0$ satisfy $T_j=T_k=T_\ell=0$, and $T_i=-\epsilon_i$ for all remaining indices $i$. 
    \end{itemize}
\end{itemize}
\end{remark}

\subsection{Case-by-case analysis}

We now proceed with a case-by-case analysis to prove that the embedding \[
    \iota\colon Q_{X(2,1)}\oplus Q_{X(3,2)}\oplus Q_{X(5,5-q_3)}\oplus Q_{X(n,n-q_4)} \longrightarrow -\mathbb{Z}^N
\] 
given in \eqref{eq:main_embedding} cannot exist. We divide our argument into three main cases based on the value of $q_3 \in \{1, 3, 4\}$. According to the classification of Lemma \ref{lem:6.1}, these are further subdivided, yielding ten subcases; one for $q_3=1$, two for $q_3=3$, and seven for $q_3=4$. 

We briefly describe the common strategy. Once the images of the first three plumbing summands are fixed, the condition \eqref{eq:property_of_K}, which specifies the intersection numbers between the vector $\mathbf{K}$ and the images of the standard basis vectors, leaves only finitely many possibilities for the initial coordinates of $\mathbf{K}$. For each such possibility, Lemma \ref{lem:6.4} gives strong restrictions on the corresponding coordinates of the total sum vector $\mathbf{T}$. In many cases these restrictions are already incompatible with the coordinate relations forced by the fixed images. In the remaining cases, we use the partial sum vectors $\mathbf{P}_k$ of the last plumbing summand. By applying Lemma \ref{lem:6.2} to constrain the possible coordinate patterns of $\mathbf{P}_k$, we either derive a rank contradiction or deduce that the orthogonal complement of the image of the embedding \eqref{eq:main_embedding} is generated by a vector whose square is not $-30n$, contradicting condition \ref{item:orthogonal_complement}. Table \ref{table:obstructions} summarizes the main obstruction used in each case.

\begin{table}[h!]
\centering
\small
\setlength{\extrarowheight}{3pt}
\resizebox{\textwidth}{!}{
\begin{tabular}{c|c|c|c}
$q_3$ &  Fixed image(s) from Lemma \ref{lem:6.1} & Possible $\mathbf{K}$ & Main obstruction \\ \hline \hline

$1$ & $e_6-e_7,e_7-e_8,e_8-e_9,e_9-e_{10}$ & $5$ choices & No compatible $\mathbf{T}$ satisfies Lemma \ref{lem:6.4}. \\ \hline

\multirow{3}{*}{3} & $e_1+e_2-e_6,e_6-e_7$ & unique & No compatible $\mathbf{T}$ satisfies Lemma \ref{lem:6.4}.  \\ \cline{2-4}

&\multirow{2}{*}{$e_6+e_7-e_8, e_8-e_9$} & \eqref{eq:K_case2-6} & No compatible $\mathbf{T}$ satisfies Lemma \ref{lem:6.4}.\\ \cline{3-4}

&& \eqref{eq:K_case2-1}-\eqref{eq:K_case2-5} & $\mathbf{P}_k$; rank or orthogonal-complement contradiction.\\ \hline

\multirow{8}{*}{4} & $e_1+e_2+e_3+e_4+e_5$ & - & Orthogonal complement condition.  \\ \cline{2-4}

 & $e_1+e_2-e_3-e_4-e_5$ & - & Orthogonal complement condition.\\ \cline{2-4}

 & $e_3+e_4+e_5+e_6+e_7$ & unique & No compatible $\mathbf{T}$ satisfies Lemma \ref{lem:6.4}.\\ \cline{2-4}
 
 & $-e_3-e_4-e_5+e_6+e_7$ & unique & No compatible $\mathbf{T}$ satisfies Lemma \ref{lem:6.4}. \\ \cline{2-4}
 
 & $2e_6+e_7$ & $2$ choices & $\mathbf{S}_4$; orthogonal complement condition. \\ \cline{2-4}
 
 & $e_1+e_2+e_6+e_7+e_8$ & $2$ choices & $\mathbf{S}_4$; orthogonal complement condition. \\ \cline{2-4}
 
& \multirow{2}{*}{\centering $e_6+e_7+e_8+e_9+e_{10}$}
& \eqref{case:4-7-5}
& No compatible $\mathbf{T}$ satisfies Lemma \ref{lem:6.4}. \\ \cline{3-4}

&
& \eqref{case:4-7-1}--\eqref{case:4-7-4}
& $\mathbf{P}_k$; rank or orthogonal-complement contradiction.
\end{tabular}}
\caption{Summary of the case-by-case analysis and the main obstruction used in each case.}
\label{table:obstructions}
\end{table}

\subsubsection{\bf{The case $q_3=1$}}
By Lemma \ref{lem:6.1}, we may assume that $\iota(u_1)=e_1-e_2$, $\iota(u_2)=e_3-e_4$, $\iota(u_3)=e_4-e_5$, and 
\[
    (\iota(u_4),\iota(u_5),\iota(u_6),\iota(u_7))=(e_6-e_7,e_7-e_8,e_8-e_9,e_9-e_{10}).
\]
Consider the vector $\mathbf{K}=\sum_{i=1}^N K_ie_i$ of the form \eqref{eq:vector_K} satisfying \eqref{eq:property_of_K}. Then \eqref{eq:property_of_K} gives 
\[
    K_1=K_2-2, \quad K_3=K_4=K_5-2, \quad K_6=K_7=K_8=K_9=K_{10}-2.
\]
It follows that $(K_1,\dots,K_{10})$ is one of the following:
\begin{numcases}{(K_1,\dots,K_{10})=}
    (-1,+1,-1,-1,+1,-1,-1,-1,-1,+1), & \label{eq:K_case1-1} \\
    (-1,+1,-1,-1,+1,+1,+1,+1,+1,+3), & \label{eq:K_case1-2} \\
    (-1,+1, +1,+1,+3, -1,-1,-1,-1,+1), & \label{eq:K_case1-3} \\
    (-3,-1,-1,-1,+1,-1,-1,-1,-1,+1), & \label{eq:K_case1-4} \\
    (+1,+3,-1,-1,+1,-1,-1,-1,-1,+1). & \label{eq:K_case1-5}
\end{numcases}

Next, write $\mathbf{T}=\sum_{i=1}^NT_ie_i$. Then
\[
    \mathbf{T}=\mathbf{S}_1+\mathbf{S}_2+\mathbf{S}_3+\mathbf{S}_4=(e_1-e_2+e_3-e_5+e_6-e_{10})+\mathbf{S}_4.
\]
Since $\mathbf{S}_4$ is orthogonal to $\iota(u_1),\dots,\iota(u_7)$, computing the intersection numbers $\mathbf{T}\cdot \iota(u_j)$ for $j=1,\dots,7$ gives 
\[
    T_1-T_2=2, \quad T_3-T_4=T_4-T_5=1, \quad T_6-T_7=1, \quad T_7=T_8=T_9, \quad T_9-T_{10}=1. 
\]
By Lemma \ref{lem:6.3} we also have $\mathbb{I}_{\operatorname{total}}\in\{-2,-1\}$ . 

If $(K_3,K_4,K_5)=(-1,-1,+1)$, as in the cases \eqref{eq:K_case1-1}, \eqref{eq:K_case1-2}, \eqref{eq:K_case1-4}, and \eqref{eq:K_case1-5}, then the relation between $\mathbf{K}$ and $\mathbf{T}$ given in Lemma \ref{lem:6.4} (see also Remark \ref{rmk:6.5}), together with the condition $T_3-T_4=T_4-T_5=1$, implies that $(T_3,T_4,T_5)$ is either $(+1,0,-1)$ or $(+2,+1,0)$. If $(K_3,K_4,K_5)=(+1,+1,+3)$, as in the case \eqref{eq:K_case1-3}, then the only possibility for $(T_3,T_4,T_5)$ is $(0,-1,-2)$. Similarly, if $(K_6,\dots,K_{10})=(-1,-1,-1,-1,+1)$, as in the cases \eqref{eq:K_case1-1}, \eqref{eq:K_case1-3}, \eqref{eq:K_case1-4}, and \eqref{eq:K_case1-5}, then $(T_6,\dots,T_{10})$ must be either $(+1,0,0,0,-1)$ or $(+2,+1,+1,+1,0)$. If $(K_6,\dots,K_{10})=(+1,+1,+1,+1,+3)$, as in the case \eqref{eq:K_case1-2}, then $(T_6,\dots,T_{10})=(0,-1,-1,-1,-2)$. 

Hence, if $(K_1,\dots,K_{10})$ is given by \eqref{eq:K_case1-1}, \eqref{eq:K_case1-4}, or \eqref{eq:K_case1-5}, then $(T_3,\dots,T_{10})$ is one of the following: \[
(T_3,\dots,T_{10})=\begin{cases}
    (+1,0,-1,+1,0,0,0,-1), \\
    (+1,0,-1,+2,+1,+1,+1,0),\\
    (+2,+1,0,+1,0,0,0,-1), \\
    (+2,+1,0,+2,+1,+1,+1,0). 
\end{cases}
\]
The first case is impossible since the vector $\mathbf{T}$ can have at most three zero coefficients. The other cases are also impossible since if $T_i=\pm 2$ for some $i$, then $\mathbf{T}$ can have at most one zero coefficient. If $(K_1,\dots,K_{10})$ is given by \eqref{eq:K_case1-2}, then $(T_3,\dots,T_{10})$ is either \[
(+1,0,-1, 0,-1,-1,-1,-2) \quad \text{or} \quad (+2,+1,0,0,-1,-1,-1,-2),
\]
but both cases are impossible by similar reasoning. If $(K_1,\dots,K_{10})$ is given by \eqref{eq:K_case1-3}, then $(T_3,\dots,T_{10})$ is either \[
(0,-1,-2,+1,0,0,0,-1) \quad \text{or} \quad (0,-1,-2,+2,+1,+1,+1,0),
\]
which are again impossible. This completes the proof that the case $q_3=1$ cannot occur.

\subsubsection{\bf{The case $q_3=3$}}\label{sec:q_3=3}
By Lemma \ref{lem:6.1}, we may assume that $\iota(u_1)=e_1-e_2$, $\iota(u_2)=e_3-e_4$, $\iota(u_3)=e_4-e_5$, and \[
(\iota(u_4),\iota(u_5))=(e_1+e_2-e_6,e_6-e_7) \quad \text{or} \quad (e_6+e_7-e_8,e_8-e_9).
\]

\vspace{1mm}
\textbf{\hypertarget{Case_3-1}{Case 1}:} $(\iota(u_4),\iota(u_5))=(e_1+e_2-e_6,e_6-e_7)$.\\ The vector $\mathbf{K}=\sum_{i=1}^N K_ie_i$ of the form \eqref{eq:vector_K} satisfies \eqref{eq:property_of_K}, and it follows immediately that \[
(K_1,\dots,K_7)=(-1,+1,-1,-1,+1,+1,+3).
\]
On the other hand, the vector $\mathbf{T}=\sum_{i=1}^NT_ie_i$ satisfies 
\[
    \mathbf{T}=\mathbf{S}_1+\mathbf{S}_2+\mathbf{S}_3+\mathbf{S}_4=(2e_1+e_3-e_5-e_7)+\mathbf{S}_4.
\]
Since $\mathbf{S}_4$ is orthogonal to the vectors $\iota(u_1),\dots,\iota(u_5)$, computing the intersection numbers $\mathbf{T}\cdot \iota(u_j)$ for $j=1,\dots,5$ gives 
\[
    T_1-T_2=2, \quad T_3-T_4=T_4-T_5=1, \quad T_1+T_2-T_6=2, \quad T_6-T_7=1.
\]
Since $K_7=+3$, Lemma \ref{lem:6.4} shows that $T_7\in \{-2,-3\}$. If $T_7=-2$, then $T_6-T_7=1$ implies $T_6=-1$. The conditions $T_1-T_2=2$ and $T_1+T_2-T_6=2$ then imply $(T_1,T_2)=(\frac{3}{2},-\frac{1}{2})$, which is impossible since the $T_i$ are integers. If $T_7=-3$, then Lemma \ref{lem:6.4} gives $T_1\in \{0,+1\}$ and $T_2\in \{-1,0\}$. Hence the condition $T_1-T_2=2$ implies $(T_1,T_2)=(+1,-1)$. Then the condition $T_1+T_2-T_6=2$ implies $T_6=-2$, which is impossible since the vector $\mathbf{T}$ cannot have coefficients $\pm 2$ and $\pm3$ simultaneously by Lemma \ref{lem:6.4}. We conclude that \textbf{\hyperlink{Case_3-1}{Case 1}} cannot occur.

\vspace{1mm}
\textbf{\hypertarget{Case_3-2}{Case 2}:} $(\iota(u_4),\iota(u_5))=(e_6+e_7-e_8,e_8-e_9)$.\\ This is the most involved case in this section. Note that interchanging $e_6$ and $e_7$ leaves the vectors $\iota(u_1),\dots,\iota(u_5)$ unchanged. Up to interchanging $e_6$ and $e_7$, one checks that $(K_1,\dots,K_9)$ is one of the following: \begin{numcases}{(K_1,\dots,K_9)=}
 (-1,+1,+1,+1,+3,-1,-1,-1,+1), & \label{eq:K_case2-6}\\
 (-3,-1,-1,-1,+1,-1,-1,-1,+1), & \label{eq:K_case2-1} \\ 
 (+1,+3,-1,-1,+1,-1,-1,-1,+1), & \label{eq:K_case2-2} \\
 (-1,+1,-1,-1,+1,-3,+1,-1,+1), & \label{eq:K_case2-3} \\
 (-1,+1,-1,-1,+1,-1,+1,+1,+3), & \label{eq:K_case2-4} \\
 (-1,+1,-1,-1,+1,-1,-1,-1,+1). & \label{eq:K_case2-5} 
\end{numcases}
Moreover, by applying, if necessary, an automorphism of $-\mathbb{Z}^N$ that changes the signs of some of the vectors $e_i$ with $i>9$, we may assume that $K_{10}=\cdots=K_N=+1$, except in the case \eqref{eq:K_case2-5}. In the case \eqref{eq:K_case2-5}, we assume instead that $K_{10}=+3$ and $K_{11}=\cdots=K_N=+1$. 

On the other hand, the vector $\mathbf{T}=\sum_{i=1}^N T_ie_i$ satisfies 
\[
\mathbf{T}=(e_1-e_2+e_3-e_5+e_6+e_7-e_9)+\mathbf{S}_4,
\]
and computing $\mathbf{T}\cdot \iota(u_j)$ for $j=1,\dots,5$ gives 
\[
T_1-T_2=2, \quad T_3-T_4=T_4-T_5=1, \quad T_6+T_7-T_8=2, \quad T_8-T_9=1.
\]

We now consider the cases \eqref{eq:K_case2-6}--\eqref{eq:K_case2-5} for $\mathbf{K}$ separately. 

\vspace{1mm}
\textbf{\hypertarget{Case_3-2-0}{Case 2-1} \eqref{eq:K_case2-6}:} $(K_1,\dots,K_9)=(-1,+1,+1,+1,+3,-1,-1,-1,+1)$. 
This case is easily eliminated. Indeed, since $K_5=+3$, Lemma \ref{lem:6.4} shows that $T_5\in \{-2,-3\}$. If $T_5=-2$, then $T_3-T_4=T_4-T_5=1$ implies $(T_3,T_4,T_5)=(0,-1,-2)$. Thus Lemma \ref{lem:6.4} shows that $\mathbb{I}_{\operatorname{total}}=-2$, and that $T_i\neq 0$ for $i\neq 3$. Now $(K_8,K_9)=(-1,+1)$ implies $T_8\in \{+1,+2\}$ and $T_9\in \{-1,-2\}$, so the condition $T_8-T_9=1$ cannot be satisfied. If $T_5=-3$, then $T_3-T_4=T_4-T_5=1$ implies $(T_3,T_4,T_5)=(-1,-2,-3)$, which is impossible by Lemma \ref{lem:6.4}. 

\vspace{1mm}
\textbf{\hypertarget{Case_3-2-1}{Case 2-2} \eqref{eq:K_case2-1}:} $(K_1,\dots,K_9)=(-3,-1,-1,-1,+1,-1,-1,-1,+1)$.
Since $K_1=-3$, we have $T_1\in \{+2,+3\}$. If $T_1=+2$, then $T_1-T_2=2$ implies $T_2=0$. Thus, we must have $\mathbb{I}_{\operatorname{total}}=-2$ and $T_i\neq 0$ for $i\neq 2$. Then $(K_4,K_5)=(-1,+1)$ implies $T_4\in \{+1,+2\}$ and $T_5\in \{-1,-2\}$, so the condition $T_4-T_5=1$ cannot be satisfied. Therefore, we must have $T_1=+3$, and it is easily seen that \[
(T_1,\dots,T_9)=(+3,+1,+1,0,-1,+1,+1,0,-1).
\]

Now, if $\mathbb{I}_{\operatorname{total}}=-1$, then $T_{10}=\cdots=T_N=-1$ by Lemma \ref{lem:6.4}. Thus, $\mathbf{T}+\mathbf{K}=-e_4-e_8$, and therefore \[
\mathbf{S}_4+\mathbf{K}=\mathbf{T}+\mathbf{K}-(\mathbf{S}_1+\mathbf{S}_2+\mathbf{S}_3)=-e_1+e_2-e_3-e_4+e_5-e_6-e_7-e_8+e_9.
\]
On the other hand, since the vector $\mathbf{P}_k=\sum_{i=1}^k \iota(v_i)$ $(k=1,\dots,s)$ is orthogonal to the vectors $\iota(u_1),\dots,\iota(u_5)$, it must be of the form \[
\mathbf{P}_k=\alpha_k(e_1+e_2)+\beta_k(e_3+e_4+e_5)+\gamma_k(e_6+e_8+e_9)+\delta_k(e_7+e_8+e_9)+\sum_{i=10}^N d_{k,i}e_i.
\]
Let us compute $C_k:=(\alpha_k,\beta_k,\gamma_k,\delta_k)$ for each $k$. For $k=s$, we have \[
\mathbf{P}_s=\mathbf{S}_4=\mathbf{T}-(\mathbf{S}_1+\mathbf{S}_2+\mathbf{S}_3)=2(e_1+e_2)-(e_{10}+\cdots+e_{N}),
\]
so $C_s=(2,0,0,0)$. For $k<s$, Lemma \ref{lem:6.2}(f) implies \begin{equation}\label{eq:case2-1}
    -1=\mathbf{P}_k\cdot (\mathbf{S}_4+\mathbf{K}) = \beta_k+\gamma_k+\delta_k.
\end{equation}
Also, Lemma \ref{lem:6.2}(d) implies \begin{align*}
    2& =-(\mathbf{P}_k^2+\mathbf{P}_k\cdot \mathbf{K})\\ & = (2\alpha_k^2-4\alpha_k)+(3\beta_k^2-\beta_k) +(3\gamma_k^2+3\delta_k^2+4\gamma_k\delta_k-\gamma_k-\delta_k) +\sum_{i=10}^N (d_{k,i}^2+d_{k,i}).
\end{align*}
Since $d_{k,i}^2+d_{k,i}\geq 0$, it follows that \begin{equation}\label{eq:case2-1-2}
    (2\alpha_k^2-4\alpha_k)+(3\beta_k^2-\beta_k) +(3\gamma_k^2+3\delta_k^2+4\gamma_k\delta_k-\gamma_k-\delta_k)  \leq 2.
\end{equation}
By \eqref{eq:case2-1} and \eqref{eq:case2-1-2}, we must have \[
C_k=(\alpha_k,\beta_k,\gamma_k,\delta_k)\in \{(1,-1,0,0), (1,0,-1,0), (1,0,0,-1)\} \quad (k=1,\dots,s-1).
\]
We claim that if the set $C=\{C_1,\dots,C_{s-1}\}$ contains $(1,-1,0,0)$, then it cannot contain $(1,0,-1,0)$ or $(1,0,0,-1)$. If this were not the case, there would exist $k,\ell<s$ such that $C_k=(1,-1,0,0)$ and $C_\ell\in \{(1,0,-1,0),(1,0,0,-1)\}$. Assume that $C_\ell=(1,0,-1,0)$ and $k<\ell$; the other cases are similar. Then $C_\ell-C_k=(0,1,-1,0)$, so \[
\mathbf{P}_\ell-\mathbf{P}_k=(e_3+e_4+e_5)-(e_6+e_8+e_9)+\sum_{i=10}^N d_ie_i
\]
where $d_i=d_{\ell,i}-d_{k,i}$. By Lemma \ref{lem:6.2}(e), it follows that \begin{align*}
    2 & = -\left((\mathbf{P}_\ell-\mathbf{P}_k)^2+(\mathbf{P}_\ell-\mathbf{P}_k)\cdot \mathbf{K} \right)  = 6+\sum_{i=10}^N (d_i^2+d_i)\geq 6,
\end{align*}
which is a contradiction. This proves our claim. 

Now, if $C$ contains $(1,-1,0,0)$, then $C_k\notin \{(1,0,-1,0),(1,0,0,-1)\}$ for all $k$, so $\gamma_k=\delta_k=0$ for all $k=1,\dots,s$. Observing that  $\iota(v_k)=\mathbf{P}_k-\mathbf{P}_{k-1}$ (with the convention that $\mathbf{P}_0=0$), it follows that \[
\iota(u_1),\iota(u_2),\iota(u_3),\iota(v_1),\dots,\iota(v_s)\in \langle e_1,\dots,e_5,e_{10},\dots,e_N\rangle  \cong -\mathbb{Z}^{N-4}=-\mathbb{Z}^{s+2},
\]
i.e., the image $\iota(Q_{X(2,1)}\oplus Q_{X(3,2)}\oplus Q_{X(n,n-q_4)} )$ is a sublattice of $\langle e_1,\dots,e_5,e_{10},\dots,e_N\rangle$, but this is impossible since $\operatorname{rank}(Q_{X(2,1)}\oplus Q_{X(3,2)}\oplus Q_{X(n,n-q_4)} )=s+3$. 

If $C$ does not contain $(1,-1,0,0)$, then $\beta_k=0$ for all $k$, from which it follows that \[
\iota(u_1),\iota(u_4),\iota(u_5),\iota(v_1),\dots,\iota(v_s)\in \langle e_1,e_2,e_6,\dots,e_N\rangle \cong -\mathbb{Z}^{N-3}=-\mathbb{Z}^{s+3},
\]
i.e., the image $\iota(Q_{X(2,1)}\oplus Q_{X(5,2)}\oplus Q_{X(n,n-q_4)})$ is a full-rank sublattice of $\langle e_1,e_2,e_6,\dots,e_N\rangle$. Thus, any vector $\sum_{i=1}^N c_ie_i\in -\mathbb{Z}^N$ orthogonal to $\iota(Q_{X(2,1)}\oplus Q_{X(5,2)}\oplus Q_{X(n,n-q_4)})$ must satisfy $c_1=c_2=c_6=\cdots=c_N=0$. Consequently, the orthogonal complement of the image of the embedding \eqref{eq:main_embedding} is generated by the vector $e_3+e_4+e_5$, which has square $-3\neq -30 n$, contradicting condition \ref{item:orthogonal_complement}. This completes the proof for \textbf{\hyperlink{Case_3-2-1}{Case 2-2}} in the case $\mathbb{I}_{\operatorname{total}}=-1$. 

The case $\mathbb{I}_{\operatorname{total}}=-2$ is similar. In this case, we may assume that $T_{10}=\cdots=T_{N-1}=-1$ and $T_N=0$. For each $k=1,\dots,s$, we write $\mathbf{P}_k$ as \[
\mathbf{P}_k=\alpha_k(e_1+e_2)+\beta_k(e_3+e_4+e_5)+\gamma_k(e_6+e_8+e_9)+\delta_k(e_7+e_8+e_9)+\sum_{i=10}^{N-1} d_{k,i}e_i+\eta_k e_N,
\]
and consider $C_k:=(\alpha_k,\beta_k,\gamma_k,\delta_k,\eta_k)$. For $k=s$, we have \[
\mathbf{P}_s=\mathbf{S}_4=2(e_1+e_2)-(e_{10}+\cdots+e_{N-1}),
\]
so $C_s=(2,0,0,0,0)$. For $k<s$, the condition $\mathbf{P}_k\cdot (\mathbf{S}_4+\mathbf{K})=-1$ of Lemma \ref{lem:6.2}(f) gives \begin{equation}\label{eq:case:2-1-2(1)}
    \beta_k+\gamma_k+\delta_k-\eta_k=-1,
\end{equation}
and the condition $\mathbf{P}_k^2+\mathbf{P}_k\cdot \mathbf{K}=-2$ of Lemma \ref{lem:6.2}(d) gives \begin{equation}\label{eq:case:2-1-2(2)}
   (2\alpha_k^2-4\alpha_k)+(3\beta_k^2-\beta_k) +(3\gamma_k^2+3\delta_k^2+4\gamma_k\delta_k-\gamma_k-\delta_k) +(\eta_k^2+\eta_k)  \leq 2.
\end{equation}
Combining \eqref{eq:case:2-1-2(1)} and \eqref{eq:case:2-1-2(2)} gives \[
C_k \in \left\{ 
\begin{aligned}
    &(0,0,0,0,1), \, (1,0,0,0,1), \, (2,0,0,0,1), \, (1,-1,0,0,0), \\
    &(1,0,-1,0,0), \, (1,0,-1,1,1), \, (1,0,0,-1,0), \, (1,0,1,-1,1)
\end{aligned} 
\right\} \quad (k=1,\dots,s-1).
\]
Then by using the condition $(\mathbf{P}_\ell-\mathbf{P}_k)^2+(\mathbf{P}_\ell-\mathbf{P}_k)\cdot \mathbf{K}=-2$ of Lemma \ref{lem:6.2}(e) as above, one checks that if the set $\{C_1,\dots,C_{s-1}\}$ contains $(1,-1,0,0,0)$, then it cannot contain any of $(1, 0, -1, 0, 0)$, $(1, 0, -1, 1, 1)$, $(1, 0, 0, -1, 0)$, $(1, 0, 1, -1, 1)$. This shows that we have either $\beta_k=0$ for all $k=1,\dots s$, or $\gamma_k=\delta_k=0$ for all $k=1,\dots,s$. Now we can obtain a contradiction exactly as in the case $\mathbb{I}_{\operatorname{total}}=-1$. This completes the proof for \textbf{\hyperlink{Case_3-2-1}{Case 2-2}}.

\vspace{2mm}
The remaining cases \eqref{eq:K_case2-2}-\eqref{eq:K_case2-5} are similar to \textbf{\hyperlink{Case_3-2-1}{Case 2-2}} \eqref{eq:K_case2-1}, so we summarize the arguments briefly.

\vspace{1mm}
\textbf{\hypertarget{Case_3-2-2}{Case 2-3} \eqref{eq:K_case2-2}:} $(K_1,\dots,K_9)=(+1,+3,-1,-1,+1,-1,-1,-1,+1)$.
In this case, we have \[
(T_1,\dots,T_9)=(-1,-3,+1,0,-1,+1,+1,0,-1).
\]
If $\mathbb{I}_{\operatorname{total}}=-1$, then for $k=1,\dots,s$, write \begin{equation}\label{eq:P_k_for_I_total=-1}
\mathbf{P}_k=\alpha_k(e_1+e_2)+\beta_k(e_3+e_4+e_5)+\gamma_k(e_6+e_8+e_9)+\delta_k(e_7+e_8+e_9)+\sum_{i=10}^N d_{k,i}e_i,
\end{equation}
and let $C_k=(\alpha_k,\beta_k,\gamma_k,\delta_k)$. We have $C_s=(-2,0,0,0)$. For $k<s$, the conditions $\mathbf{P}_k\cdot (\mathbf{S}_4+\mathbf{K})=-1$ and $\mathbf{P}_k^2+\mathbf{P}_k\cdot \mathbf{K}=-2$ imply \[
C_k\in \{(-1,-1,0,0), (-1,0,-1,0), (-1,0,0,-1)\} \quad (k=1,\dots,s-1).
\]
From here, we proceed exactly as in \textbf{\hyperlink{Case_3-2-1}{Case 2-2}} to show that if the set $\{C_1,\dots,C_{s-1}\}$ contains $(-1,-1,0,0)$, then it cannot contain $(-1,0,-1,0)$ or $(-1,0,0,-1)$, thereby obtaining a contradiction.

If $\mathbb{I}_{\operatorname{total}}=-2$, we may assume that $T_{10}=\cdots=T_{N-1}=-1$ and $T_N=0$. For $k=1,\dots,s$, write \begin{equation}\label{eq:P_k_for_I_total=-2}
  \mathbf{P}_k=\alpha_k(e_1+e_2)+\beta_k(e_3+e_4+e_5)+\gamma_k(e_6+e_8+e_9)+\delta_k(e_7+e_8+e_9)+\sum_{i=10}^{N-1} d_{k,i}e_i+\eta_k e_N,  
\end{equation}
and let $C_k=(\alpha_k,\beta_k,\gamma_k,\delta_k,\eta_k)$. In this case, we have $C_s=(-2,0,0,0,0)$ and \[
C_k \in \left\{ 
\begin{aligned}
    &(0,0,0,0,1), \, (-1,0,0,0,1), \, (-2,0,0,0,1), \, (-1,-1,0,0,0), \\
    &(-1,0,-1,0,0), \, (-1,0,-1,1,1), \, (-1,0,0,-1,0), \, (-1,0,1,-1,1)
\end{aligned} 
\right\} \quad (k=1,\dots,s-1).
\]
Again, we proceed as in \textbf{\hyperlink{Case_3-2-1}{Case 2-2}} to show that if the set $\{C_1,\dots,C_{s-1}\}$ contains $(-1,-1,0,0,0)$, then it cannot contain any of $(-1,0,-1,0,0)$, $(-1,0,-1,1,1)$, $(-1,0,0,-1,0)$, $(-1,0,1,-1,1)$, thereby obtaining a contradiction.

\vspace{1mm}
\textbf{\hypertarget{Case_3-2-3}{Case 2-4} \eqref{eq:K_case2-3}:} $(K_1,\dots,K_9)=(-1,+1,-1,-1,+1,-3,+1,-1,+1)$.
In this case, we have \begin{numcases}
    {(T_1,\dots,T_9)=} (+1,-1,+1,0,-1,+3,-1,0,-1), \label{case:3-2-3(1)} \\
    (+1,-1,+1,0,-1,+3,0,+1,0). \label{case:3-2-3(2)}
\end{numcases}
Consider first the case \eqref{case:3-2-3(1)}. If $\mathbb{I}_{\operatorname{total}}=-1$, write $\mathbf{P}_k$ as \eqref{eq:P_k_for_I_total=-1} for $k=1,\dots,s$, and let $C_k=(\alpha_k,\beta_k,\gamma_k,\delta_k)$. For $k=s$, we have $C_s=(0,0,2,-2)$. For $k<s$, we have \[
C_k \in\left\{ (0, 0, 1, -2), (0, 0, 0, -1), (0, -1, 1, -1)\right\}.
\]
In particular, $\alpha_k=0$ for all $k=1,\dots,s$. This implies that \[
\iota(u_2),\iota(u_3),\iota(u_4),\iota(u_5),\iota(v_1),\dots,\iota(v_s)\in \langle e_3,\dots,e_N\rangle \cong -\mathbb{Z}^{N-2}=-\mathbb{Z}^{s+4},
\]
i.e., the image $\iota(Q_{X(3,2)}\oplus Q_{X(5,2)}\oplus Q_{X(n,n-q_4)})$ is a full-rank sublattice of $\langle e_3,\dots,e_N\rangle$. Therefore, the orthogonal complement of the image of the embedding \eqref{eq:main_embedding} is generated by the vector $e_1+e_2$, which has square $-2\neq -30 n$, contradicting condition \ref{item:orthogonal_complement}. 

If $\mathbb{I}_{\operatorname{total}}=-2$, assume $T_{10}=\cdots=T_{N-1}=-1$ and $T_N=0$, and write $\mathbf{P}_k$ as \eqref{eq:P_k_for_I_total=-2} for $k=1,\dots,s$, and let $C_k=(\alpha_k,\beta_k,\gamma_k,\delta_k,\eta_k)$. For $k=s$, we have $C_s=(0,0,2,-2,0)$, and for $k<s$, we have \[
C_k\in \left\{ 
\begin{aligned}
    & (0, 0, 0, 0, 1), \, (0, 0, 0, -1, 0), \, (0, 0, 1, -1, 1), \, (0, -1, 1, -1, 0), \\
 & (1, 0, 1, -1, 1), \, (-1, 0, 1, -1, 1), \, (0, 0, 1, -2, 0), \, (0, 0, 2, -2, 1)
\end{aligned}
\right\}
\]
As in \textbf{\hyperlink{Case_3-2-1}{Case 2-2}}, we can show that if the set $\{C_1,\dots,C_{s-1}\}$ contains $(0,-1,1,-1,0)$, then it cannot contain $(1,0,1,-1,1)$ or $(-1,0,1,-1,1)$. It follows that either $\alpha_k=0$ for all $k=1,\dots,s$, or $\beta_k=0$ for all $k=1,\dots,s$. In the former case, we obtain a contradiction as in the preceding paragraph. In the latter case, we obtain a contradiction as in \textbf{\hyperlink{Case_3-2-1}{Case 2-2}}. 

Finally, consider the case \eqref{case:3-2-3(2)}. In this case, the vector $\mathbf{T}$ already has three zero coefficients, so we must have $\mathbb{I}_{\operatorname{total}}=-2$, and $T_{10}=\cdots=T_N=-1$. Now write $\mathbf{P}_k$ as \eqref{eq:P_k_for_I_total=-1} for $k=1,\dots,s$, and let $C_k=(\alpha_k,\beta_k,\gamma_k,\delta_k)$. For $k=s$, we have $C_s=(0,0,2,-1)$, and for $k<s$, we have \[
C_k\in \{(0,0,1,0), (1,0,1,0), (-1,0,1,0)\}.
\]
In particular, we have $\beta_k=0$ for all $k=1,\dots,s$, and we may obtain a contradiction as in \textbf{\hyperlink{Case_3-2-1}{Case 2-2}}. 

\vspace{1mm}
\textbf{\hypertarget{Case_3-2-4}{Case 2-5} \eqref{eq:K_case2-4}:} $(K_1,\dots,K_9)=(-1,+1,-1,-1,+1,-1,+1,+1,+3)$.
In this case, we have \[
(T_1,\dots,T_9)=(+1,-1,+1,0,-1,+2,-1,-1,-2),
\]
and Lemma \ref{lem:6.4} shows that we must have $\mathbb{I}_{\operatorname{total}}=-2$, so $T_{10}=\cdots=T_N=-1$. Now write $\mathbf{P}_k$ as \eqref{eq:P_k_for_I_total=-1} for $k=1,\dots,s$, and let $C_k=(\alpha_k,\beta_k,\gamma_k,\delta_k)$. For $k=s$, we have $C_s=(0,0,1,-2)$, and for $k<s$, we have \[
C_k\in \{(0,0,1,-1), (1,0,1,-1), (-1,0,1,-1)\}.
\]
In particular, we have $\beta_k=0$ for all $k=1,\dots,s$, and we may obtain a contradiction as above.

\vspace{1mm}
\textbf{\hypertarget{Case_3-2-5}{Case 2-6} \eqref{eq:K_case2-5}:} $(K_1,\dots,K_9)=(-1,+1,-1,-1,+1,-1,-1,-1,+1)$. 
In this case, we have \[
(T_1,\dots,T_9)=(+1,-1,+1,0,-1,+1,+1,0,-1).
\]
The vector $\mathbf{T}$ already has two zero coefficients, so we must have $T_{10}=-3$. 

If $\mathbb{I}_{\operatorname{total}}=-1$, write $\mathbf{P}_k$ for $k=1,\dots,s$ as \[
\mathbf{P}_k = \alpha_k(e_1+e_2)+\beta_k(e_3+e_4+e_5)+\gamma_k(e_6+e_8+e_9)+\delta_k(e_7+e_8+e_9)+\eta_k e_{10}+\sum_{i=11}^N d_{k,i}e_i,
\]
and let $C_k=(\alpha_k,\beta_k,\gamma_k,\delta_k,\eta_k)$. For $k=s$, we have $C_s=(0,0,0,0,-3)$, and for $k<s$, we have \[
C_k\in \left\{ \begin{aligned}
     & (0,-1,0,0,-1), \,(0,-1,0,0,-2),\, (0,0,-1,0,-1), \\ & (0,0,-1,0,-2),\, (0,0,0,-1,-1), \,(0,0,0,-1,-2)
\end{aligned}\right\}.
\]
In particular, we have $\alpha_k=0$ for all $k=1,\dots,s$, and we may obtain a contradiction as in \textbf{\hyperlink{Case_3-2-3}{Case 2-4}}. 

Now assume $\mathbb{I}_{\operatorname{total}}=-2$. We may assume that $T_{11}=\cdots=T_{N-1}=-1$ and $T_N=0$. Write $\mathbf{P}_k$ for $k=1,\dots,s$ as \begin{align*}
    \mathbf{P}_k & = \alpha_k(e_1+e_2)+\beta_k(e_3+e_4+e_5)+\gamma_k(e_6+e_8+e_9)+\delta_k(e_7+e_8+e_9) \\
    & \quad + \eta_ke_{10}+\sum_{i=11}^{N-1} d_{k,i}e_i +  \lambda_k e_N,
\end{align*}
and let $C_k=(\alpha_k,\beta_k,\gamma_k,\delta_k,\eta_k,\lambda_k)$. For $k=s$, we have $C_s=(0,0,0,0,-3,0)$. For $k<s$, the conditions $\mathbf{P}_k\cdot (\mathbf{S}_4+\mathbf{K})=-1$ and $\mathbf{P}_k^2+\mathbf{P}_k\cdot \mathbf{K}=-2$ imply $\beta_k+\gamma_k+\delta_k-\lambda_k=-1$ and \[
2\alpha_k^2+(3\beta_k^2-\beta_k)+(3\gamma_k^2+3\delta_k^2+4\gamma_k\delta_k-\gamma_k-\delta_k)+(\eta_k^2+3\eta_k)+(\lambda_k^2+\lambda_k)\leq 2,
\]
and therefore \[
C_k\in \left\{\begin{aligned}
    &(1, 0, 0, 0, -1, 1), (1, 0, 0, 0, -2, 1), (-1, 0, 0, 0, -1, 1), (-1, 0, 0, 0, -2, 1), (0, 0, 0, 0, 0, 1), \\
    &(0, 0, 0, 0, -1, 1), (0, 0, 0, 0, -2, 1), (0, 0, 0, 0, -3, 1), (0, -1, 0, 0, -1, 0), (0, -1, 0, 0, -2, 0), \\
    & (0, 0, -1, 0, -1, 0), (0, 0, -1, 0, -2, 0), (0, 0, 0, -1, -1, 0), (0, 0, 0, -1, -2, 0), \\ & (0, 0, -1, 1, -1, 1), (0, 0, -1, 1, -2, 1), (0, 0, 1, -1, -1, 1), (0, 0, 1, -1, -2, 1)
\end{aligned} \right\}.
\]
Using the condition $(\mathbf{P}_\ell-\mathbf{P}_k)^2+(\mathbf{P}_\ell-\mathbf{P}_k)\cdot \mathbf{K}=-2$ of Lemma \ref{lem:6.2}(e), we can check that if the set $\{C_1,\dots,C_{s-1}\}$ contains either $(0,-1,0,0,-1,0)$ or $(0,-1,0,0,-2,0)$, then it cannot contain any of $(1,0,0,0,-1,1)$, $(1,0,0,0,-2,1)$, $(-1,0,0,0,-1,1)$, $(-1,0,0,0,-2,1)$. This implies that either $\alpha_k=0$ for all $k=1,\dots,s$, or $\beta_k=0$ for all $k=1,\dots,s$. Thus, we may obtain a contradiction as in \textbf{\hyperlink{Case_3-2-3}{Case 2-4}}. This completes the proof for \textbf{\hyperlink{Case_3-2}{Case 2}}. 

\subsubsection{\bf{The case $q_3=4$}} 
By Lemma \ref{lem:6.1}, we may assume that $\iota(u_1)=e_1-e_2$, $\iota(u_2)=e_3-e_4$, and $\iota(u_3)=e_4-e_5$. For $\iota(u_4)$, there are exactly seven cases given in \eqref{eq:iota(u_4)_for_q_3=4}. 

\vspace{1mm}
\textbf{\hypertarget{Case_4-1}{Case 1}:} $\iota(u_4)=e_1+e_2+e_3+e_4+e_5$. This case is an easy consequence of the following lemma. 

\begin{lemma}\label{lem:6.6}
    For a standard basis vector $v\in \{v_1,\dots,v_s\}$ of $Q_{X(n,n-q_4)}$, if $\iota(v)=\sum_{i=1}^N m_ie_i$, then we have either:
    \begin{itemize}
        \item $|m_i|\leq 2$ for all $i=1,\dots,N$, or
        \item $|m_i|=3$ for exactly one index $i$, and $|m_j|\leq 2$ for all remaining indices $j$.
    \end{itemize}
\end{lemma}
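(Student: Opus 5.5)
Write $v=v_k$ and $\iota(v)=\sum_i m_ie_i$, with $\mathbf{K}=\sum_i K_ie_i$ as in \eqref{eq:vector_K}. The plan is to combine the square of $\iota(v)$ with its pairing against $\mathbf{K}$ into a sum of nonnegative per-coordinate terms. Since the $m_i$ are integers, this forces the coefficients to be small.

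By \eqref{eq:property_of_K} we have $\mathbf{K}\cdot\iota(v)=-v^2-2$ if $k<s$ and $-v^2$ if $k=s$. Hence
\[
-\bigl(\iota(v)^2+\iota(v)\cdot\mathbf{K}\bigr)=\sum_{i=1}^N \bigl(m_i^2+K_im_i\bigr)=\begin{cases} 2 & k<s,\\ 0 & k=s.\end{cases}
\]
For every index $i$ with $K_i=\pm1$, the term $m_i^2\pm m_i$ is a nonnegative integer. The only possibly negative term is the one at $i_0$, where $K_{i_0}=\pm3$. That term is $m_{i_0}^2\pm 3m_{i_0}\ge -2$, with equality exactly when $|m_{i_0}|\in\{1,2\}$ with the appropriate sign. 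So the right-hand side bounds $\sum_{i\ne i_0}(m_i^2+K_im_i)\le 2+2=4$, and also bounds the $i_0$ term: $m_{i_0}^2\pm3m_{i_0}\le 2$.

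Next I bound each coordinate. For $i\ne i_0$, the condition $m_i^2\pm m_i\le 4$ gives $|m_i|\le 2$; indeed $|m_i|=3$ would contribute at least $6$. For $i=i_0$, the condition $m^2\pm3m\le 2$ allows $m\in\{-3,\dots,0\}$ up to sign, so $|m_{i_0}|\le 3$. Since only one index can carry the coefficient $\pm3$, the two alternatives in the statement follow at once: either all $|m_i|\le2$, or $|m_{i_0}|=3$ and all others satisfy $|m_j|\le2$.

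There is no real obstacle beyond checking the small cases carefully. The step that needs the most attention is the right-end case $k=s$, where the sum equals $0$, and verifying that an $m_{i}$ of absolute value $3$ cannot occur off $i_0$. Even if $m_{i_0}$ contributes $-2$, the budget for the other coordinates is at most $4<6$, so this is ruled out. No orthogonality or orbifold BMY input is needed; the argument uses only \eqref{eq:property_of_K} and the integrality of the coefficients.
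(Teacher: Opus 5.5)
Your proposal is correct and follows essentially the same argument as the paper: combine $\iota(v)^2$ with $\mathbf{K}\cdot\iota(v)$ via \eqref{eq:property_of_K} to get $\sum_i (m_i^2+K_im_i)\le 2$, then use integrality to bound each coordinate. You are slightly more explicit than the paper, which leaves implicit that the $i_0$ term can be as low as $-2$ and so the other coordinates have a budget of at most $4$.
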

\begin{proof} For the vector $\mathbf{K}=\epsilon_1e_1+\cdots+3\epsilon_{i_0}e_{i_0}+\cdots +\epsilon_Ne_N$, we have \[
    \mathbf{K}\cdot \iota(v)=-3\epsilon_{i_0}m_{i_0}-\sum_{i\neq i_0} \epsilon_im_i.
    \]
    On the other hand, \eqref{eq:property_of_K} implies \[
    \mathbf{K}\cdot \iota(v) \geq -v^2-2=-\iota(v)^2-2=-2+\sum_{i=1}^N m_i^2.
    \]
    Combining these gives \[
    \left(m_{i_0}^2+3\epsilon_{i_0}m_{i_0}\right) +\sum_{i\neq i_0} (m_i^2+\epsilon_im_i)\leq 2.
    \]
    This implies that $m_{i_0}\in \{0,\pm 1,\pm 2, \pm 3\}$ and $m_i\in \{0,\pm1,\pm2\}$ for $i\neq i_0$.
\end{proof}

Now, for each $v\in \{v_1,\dots,v_s\}$, since $\iota(v)$ is orthogonal to the vectors $\iota(u_1),\dots,\iota(u_4)$, it must be of the form 
\[
\iota(v)=\alpha(3e_1+3e_2-2e_3-2e_4-2e_5) +\sum_{i=6}^N d_ie_i. 
\]
But Lemma \ref{lem:6.6} shows that $\iota(v)$ cannot have two or more coefficients with absolute value $\geq 3$, so we must have $\alpha=0$. It follows that 
\[
\iota(v_1),\dots, \iota(v_s) \in \langle e_6,\dots,e_N\rangle \cong -\mathbb{Z}^{N-5}=-\mathbb{Z}^s, 
\]
i.e., the image $\iota(Q_{X(n,n-q_4)})$ is a full-rank sublattice of $\langle e_6,\dots,e_N\rangle$. Thus, any vector $\sum_{i=1}^N c_ie_i\in -\mathbb{Z}^N$ orthogonal to $\iota(Q_{X(n,n-q_4)})$ must satisfy $c_6=\cdots=c_N=0$. Consequently, the orthogonal complement of the image of the embedding \eqref{eq:main_embedding} is generated by the vector $3e_1+3e_2-2e_3-2e_4-2e_5$, which has square $-30\neq -30 n$, contradicting condition \ref{item:orthogonal_complement}. Therefore, \textbf{\hyperlink{Case_4-1}{Case 1}} cannot occur.

\vspace{1mm}
\textbf{Case 2:} $\iota(u_4)=e_1+e_2-e_3-e_4-e_5$. The argument is identical to that of \textbf{\hyperlink{Case_4-1}{Case 1}}. 

\vspace{1mm}
\textbf{\hypertarget{Case_4-3}{Case 3}:} $\iota(u_4)=e_3+e_4+e_5+e_6+e_7$. 
Note that interchanging $e_6$ and $e_7$ leaves the vectors $\iota(u_1),\dots,\iota(u_4)$ unchanged. Up to interchanging $e_6$ and $e_7$, we have \[
(K_1,\dots,K_7)=(-1,+1,-1,-1,+1,-3,-1).
\]
On the other hand, the vector $\mathbf{T}=\sum_{i=1}^NT_ie_i$ satisfies \[
\mathbf{T}=\mathbf{S}_1+\mathbf{S}_2+\mathbf{S}_3+\mathbf{S}_4=(e_1-e_2+2e_3+e_4+e_6+e_7)+\mathbf{S}_4.
\]
Since $\mathbf{S}_4$ is orthogonal to the vectors $\iota(u_1),\dots,\iota(u_4)$, calculating the intersection numbers $\mathbf{T}\cdot \iota(u_j)$ for $j=1,\dots,4$ gives \[
T_1-T_2=2, \quad T_3-T_4=T_4-T_5=1, \quad T_3+T_4+T_5+T_6+T_7=5.
\]
Note that $\mathbb{I}_{\operatorname{total}}\in \{-1,0\}$ by Lemma \ref{lem:6.3}. Since $K_6=-3$, we have $T_6\in \{+2,+3\}$. If $T_6=+2$, then $T_i\neq 0$ for all $i$ by Lemma \ref{lem:6.4}. Also, $(K_4,K_5)=(-1,+1)$ implies $T_4\in \{+1,+2\}$ and $T_5\in \{-1,-2\}$, so the condition $T_4-T_5=1$ cannot be satisfied. If $T_6=+3$, then the conditions $(K_3,K_4,K_5)=(-1,-1,+1)$ and $ T_3-T_4=T_4-T_5=1$ imply $(T_3,T_4,T_5)=(+1,0,-1)$, and the condition $K_7=-1$ implies $T_7\in \{0,+1\}$. Therefore, $T_3+T_4+T_5+T_6+T_7=3+T_7\leq 4$, i.e., the condition $T_3+T_4+T_5+T_6+T_7=5$ cannot be satisfied. We conclude that \textbf{\hyperlink{Case_4-3}{Case 3}} cannot occur.

\vspace{1mm}
\textbf{\hypertarget{Case_4-4}{Case 4}:} $\iota(u_4)=-e_3-e_4-e_5+e_6+e_7$. We proceed as in \textbf{\hyperlink{Case_4-3}{Case 3}}. Up to interchanging $e_6$ and $e_7$, we have \[
(K_1,\dots,K_7)=(-1,+1,+1,+1,+3,-1,+1).
\]
The vector $\mathbf{T}=\sum_{i=1}^N T_ie_i$ satisfies \[
\mathbf{T}=(e_1-e_2-e_4-2e_5+e_6+e_7)+\mathbf{S}_4,
\]
and calculating $\mathbf{T}\cdot \iota(u_j)$ for $j=2,3$ gives \[
T_3-T_4=T_4-T_5=1.
\]
Now, $K_5=+3$ implies $T_5\in \{-2,-3\}$. If $T_5=-2$, then the condition $T_3-T_4=T_4-T_5=1$ implies $(T_3,T_4)=(0,-1)$, which is impossible by Lemma \ref{lem:6.4}. If $T_5=-3$, then the condition $T_4-T_5=1$ implies $T_4=-2$, which is again impossible by Lemma \ref{lem:6.4}. We conclude that \textbf{\hyperlink{Case_4-4}{Case 4}} cannot occur.

\vspace{1mm}
\textbf{\hypertarget{Case_4-5}{Case 5}:} $\iota(u_4)=2e_6+e_7$. The vector $\mathbf{K}=\sum_{i=1}^N K_ie_i$ satisfies \[
(K_1,\dots,K_7)\in \{(-1,+1,-1,-1,+1,-3,+1), (-1,+1,-1,-1,+1,-1,-3)\}.
\]
We focus on the case $(K_1,\dots,K_7)=(-1,+1,-1,-1,+1,-3,+1)$; the other case is almost identical. The vector $\mathbf{T}=\sum_{i=1}^N T_ie_i$ satisfies \begin{equation}\label{eq:Case4-5}
   \mathbf{T}=(e_1-e_2+e_3-e_5+2e_6+e_7)+\mathbf{S}_4, 
\end{equation}
and calculating $\mathbf{T}\cdot \iota(u_j)$ for $j=1,\dots,4$ gives \[
T_1-T_2=2, \quad T_3-T_4=T_4-T_5=1, \quad 2T_6+T_7=5.
\]
Since $K_6=-3$, we have $T_6\in \{+2,+3\}$. If $T_6=+2$, then $T_i\neq 0$ for all $i$, and $K_7=+1$ implies $T_7\in \{-1,-2\}$, so the condition $2T_6+T_7=5$ cannot be satisfied. Thus, we must have $T_6=+3$, and the above conditions imply that \begin{equation}\label{eq:Case4-5:2}
    (T_1,\dots,T_7)=(+1,-1,+1,0,-1,+3,-1). 
\end{equation}
Moreover, by applying an automorphism of $-\mathbb{Z}^N$ which changes the sign of some $e_i$'s ($i>7$) if necessary, we may assume that $K_8=\cdots=K_N=+1$. 

Now, for each $i=1,\dots,s$, since $\iota(v_i)$ is orthogonal to the vectors $\iota(u_1),\dots,\iota(u_4)$, it must be of the form \[
\iota(v_i)=\alpha_i(e_1+e_2)+\beta_i(e_3+e_4+e_5)+\gamma_i(e_6-2e_7) +\sum_{j=8}^N d_{i,j}e_j,
\]
which gives \[
-v_i^2=-\iota(v_i)^2= 2\alpha_i^2+3\beta_i^2+5\gamma_i^2+\sum_{j=8}^N d_{i,j}^2
\]
and \begin{align*}
    \mathbf{K}\cdot \iota(v_i) & =(-e_1+e_2-e_3-e_4+e_5-3e_6+e_7+e_8+\cdots+e_N)\cdot \iota(v_i)=\beta_i +5\gamma_i -\sum_{j=8}^N d_{i,j}.
\end{align*}
Thus, \eqref{eq:property_of_K} implies \[
2\geq -(v_i^2+\mathbf{K}\cdot \iota(v_i)) = 2\alpha_i^2+(3\beta_i^2-\beta_i)+5(\gamma_i^2-\gamma_i)+\sum_{j=8}^N (d_{i,j}^2+d_{i,j}).
\]
Since $2\alpha_i^2\geq 0$, $\gamma_i^2-\gamma_i\geq 0$, and $d_{i,j}^2+d_{i,j}\geq 0$, it follows that $\beta_i\in \{0,1\}$. Now note that \[
\mathbf{S}_4=\sum_{i=1}^s \iota(v_i) = \sum_{i=1}^s \alpha_i(e_1+e_2)+\sum_{i=1}^s \beta_i(e_3+e_4+e_5)+\sum_{i=1}^s \gamma_i (e_6-2e_7)+\sum_{i=1}^s\sum_{j=8}^N d_{i,j}e_j.
\]
On the other hand, \eqref{eq:Case4-5} and \eqref{eq:Case4-5:2} imply $\mathbf{S}_4$ is of the form \[
\mathbf{S}_4=e_6-2e_7+\sum_{j=8}^N D_j e_j.
\]
Therefore, we must have $\sum_{i=1}^s \beta_i=0$. Since $\beta_i\in \{0,1\}$ for each $i$, it follows that $\beta_i=0$ for all $i$. As in \textbf{\hyperlink{Case_3-2-1}{Case 2-2}} of Section \ref{sec:q_3=3}, this implies that the orthogonal complement of the image of the embedding \eqref{eq:main_embedding} is generated by the vector $e_3+e_4+e_5$, which has square $-3\neq -30n$, contradicting condition \ref{item:orthogonal_complement}. Hence \textbf{\hyperlink{Case_4-5}{Case 5}} cannot occur.

\vspace{1mm}
\textbf{\hypertarget{Case_4-6}{Case 6}:} $\iota(u_4)=e_1+e_2+e_6+e_7+e_8$. Up to an automorphism of $-\mathbb{Z}^N$ permuting $\{e_6,e_7,e_8\}$ (which leaves $\iota(u_4)$ invariant), the vector $\mathbf{K}=\sum_{i=1}^N K_ie_i$ satisfies \begin{numcases}{(K_1,\dots,K_8)=}   (-3,-1,-1,-1,+1,-1,-1,+1), \label{eq:case6-1} \\
(-1,+1,-1,-1,+1,-1,-1,-3),    \label{eq:case6-2} 
\end{numcases}
and we may assume that $K_9=\cdots=K_N=+1$. The vector $\mathbf{T}=\sum_{i=1}^N T_ie_i$ satisfies \[
   \mathbf{T}=(2e_1+e_3-e_5+e_6+e_7+e_8)+\mathbf{S}_4, 
\]
and calculating $\mathbf{T}\cdot \iota(u_j)$ for $j=2,3$ gives \[
T_3-T_4=T_4-T_5=1,
\]
so we must have \[
(T_3,T_4,T_5)=(+1,0,-1). 
\]
For each $v\in \{v_1,\dots,v_s\}$, since $\iota(v)$ is orthogonal to the vectors $\iota(u_1),\dots,\iota(u_4)$, it must be of the form \[
\iota(v)=\alpha(e_1+e_2)+\beta(e_3+e_4+e_5)+\gamma_6e_6+\gamma_7e_7+\gamma_8e_8+\sum_{i=9}^N d_ie_i, \quad 2\alpha+\gamma_6+\gamma_7+\gamma_8=0.
\]
We will show that $\beta\in \{0,1\}$; then we can obtain a contradiction exactly as in \textbf{\hyperlink{Case_4-5}{Case 5}}. Letting $\mathbf{w}=\alpha(e_1+e_2)+\gamma_6e_6+\gamma_7e_7+\gamma_8e_8$, observe that \[
-(\mathbf{w}^2+\mathbf{K}\cdot \mathbf{w})=\begin{cases}
    (2\alpha^2-4\alpha)+(\gamma_6^2-\gamma_6)+(\gamma_7^2-\gamma_7)+(\gamma_8^2+\gamma_8) & \text{in case \eqref{eq:case6-1}}, \\
    2\alpha^2+(\gamma_6^2-\gamma_6)+(\gamma_7^2-\gamma_7)+(\gamma_8^2-3\gamma_8) & \text{in case \eqref{eq:case6-2}}.
\end{cases}
\]
Using the condition $2\alpha+\gamma_6+\gamma_7+\gamma_8=0$, it is straightforward to verify that $-(\mathbf{w}^2+\mathbf{K}\cdot \mathbf{w})\geq 0$ in both cases. Now \eqref{eq:property_of_K} implies \[
2\geq -(v^2+\mathbf{K}\cdot \iota(v))=-(\mathbf{w}^2+\mathbf{K}\cdot \mathbf{w})+(3\beta^2-\beta)+\sum_{i=9}^N (d_i^2+d_i).
\]
Since $-(\mathbf{w}^2+\mathbf{K}\cdot \mathbf{w})\geq 0$ and $d_i^2+d_i\geq 0$, we must have $3\beta^2-\beta \leq 2$, which implies $\beta \in \{0,1\}$, as desired. This completes the proof for \textbf{\hyperlink{Case_4-6}{Case 6}}. 

\vspace{1mm}
\textbf{\hypertarget{Case_4-7}{Case 7}:} $\iota(u_4)=e_6+e_7+e_8+e_9+e_{10}$. In this case, an automorphism of $-\mathbb{Z}^N$ permuting the basis vectors $e_6,\dots,e_{10}$ leaves $\iota(u_1),\dots,\iota(u_4)$ invariant. Up to permuting $\{e_6,\dots,e_{10}\}$, the vector $\mathbf{K}=\sum_{i=1}^N K_ie_i$ satisfies \begin{numcases}
    {(K_1,\dots,K_{10})=}
    (-1,+1,+1,+1,+3,-1,-1,-1,-1,-1), \label{case:4-7-5}\\
    (-3,-1,-1,-1,+1,-1,-1,-1,-1,-1), \label{case:4-7-1} \\
    (+1,+3,-1,-1,+1,-1,-1,-1,-1,-1), \label{case:4-7-2} \\
    (-1,+1,-1,-1,+1,-3,-1,-1,-1,+1), \label{case:4-7-3} \\
    (-1,+1,-1,-1,+1,-1,-1,-1,-1,-1), \label{case:4-7-4}
\end{numcases}
and we may assume that $K_{11}=\cdots=K_N=+1$, except for the case \eqref{case:4-7-4}; in the case \eqref{case:4-7-4}, we assume that $K_{11}=+3$ and $K_{12}=\cdots=K_N=+1$. On the other hand, the vector $\mathbf{T}=\sum_{i=1}^N T_ie_i$ satisfies \[
\mathbf{T}=(e_1-e_2+e_3-e_5+e_6+e_7+e_8+e_9+e_{10})+\mathbf{S}_4,
\]
and calculating $\mathbf{T}\cdot \iota(u_j)$ for $j=1,\dots,4$ gives \[
T_1-T_2=2, \quad T_3-T_4=T_4-T_5=1, \quad T_6+T_7+T_8+T_9+T_{10}=5.
\]

Now, we handle the cases from \eqref{case:4-7-5} to \eqref{case:4-7-4} separately. As in \textbf{\hyperlink{Case_3-2}{Case 2}} of Section \ref{sec:q_3=3}, we shall invoke the partial sum vectors $\mathbf{P}_k$. 

\vspace{1mm}
\textbf{\hypertarget{Case_4-7-0}{Case 7-1} \eqref{case:4-7-5}:} $(K_1,\dots,K_{10})=(-1,+1,+1,+1,+3,-1,-1,-1,-1,-1)$. This case can be easily eliminated. Since $K_5=+3$, we have $T_5\in \{-2,-3\}$. If $T_5=-2$, then the condition $T_3-T_4=T_4-T_5=1$ implies $(T_3,T_4,T_5)=(0,-1,-2)$, which is impossible because $\mathbf{T}$ cannot have coefficients $0$ and $-2$ simultaneously unless $\mathbb{I}_{\operatorname{total}}=-2$ (Lemma \ref{lem:6.4}), but in our case we have $\mathbb{I}_{\operatorname{total}}\in \{-1,0\}$ by Lemma \ref{lem:6.3}. If $T_5=-3$, then $T_4-T_5=1$ implies $T_4=-2$, which is again impossible since $\mathbf{T}$ cannot have coefficients $-2$ and $-3$ simultaneously. 

\vspace{1mm}
\textbf{\hypertarget{Case_4-7-1}{Case 7-2} \eqref{case:4-7-1}:} $(K_1,\dots,K_{10})=(-3,-1,-1,-1,+1,-1,-1,-1,-1,-1)$. In this case, since $K_1=-3$, we have $T_1\in \{+2,+3\}$. If $T_1=+2$, then $T_1-T_2=2$ implies $T_2=0$, which is impossible, so we must have $T_1=+3$, and it is easily seen that \[
(T_1,\dots,T_{10})=(+3,+1,+1,0,-1,+1,+1,+1,+1,+1).
\]

Now, if $\mathbb{I}_{\operatorname{total}}=0$, then $T_{11}=\cdots=T_N=-1$. Thus, $\mathbf{T}+\mathbf{K}=-e_4$, and therefore \[
\mathbf{S}_4+\mathbf{K}=\mathbf{T}+\mathbf{K}-(\mathbf{S}_1+\mathbf{S}_2+\mathbf{S}_3)=-e_1+e_2-e_3-e_4+e_5-(e_6+\cdots+e_{10}).
\]
On the other hand, since the vector $\mathbf{P}_k=\sum_{i=1}^k \iota(v_i)$ ($k=1,\dots,s$) is orthogonal to the vectors $\iota(u_1),\dots,\iota(u_4)$, it must be of the form \[
\mathbf{P}_k=\alpha_k(e_1+e_2)+\beta_k(e_3+e_4+e_5)+\sum_{i=6}^{10} \gamma_{k,i} e_i + \sum_{i=11}^N d_{k,i}e_i, \quad \sum_{i=6}^{10} \gamma_{k,i}=0.
\]
Let us compute $C_k:=(\alpha_k,\beta_k,\vec{\gamma_k})$, where $\vec{\gamma_k}=\sum_{i=6}^{10} \gamma_{k,i} e_i$. For $k=s$, we have \[
\mathbf{P}_s=\mathbf{S}_4=\mathbf{T}-(\mathbf{S}_1+\mathbf{S}_2+\mathbf{S}_3)=2(e_1+e_2)-(e_{11}+\cdots+e_N),
\]
so $C_s=(2,0,\vec{0})$. For $k<s$, Lemma \ref{lem:6.2}(f) implies \begin{equation}\label{eq:4-7-1(1)}
    -1=\mathbf{P}_k\cdot (\mathbf{S}_4+\mathbf{K})= \beta_k. 
\end{equation}
Also, Lemma \ref{lem:6.2}(d) implies \[
2=-\left(\mathbf{P}_k^2+\mathbf{P}_k\cdot \mathbf{K}\right)= (2\alpha_k^2-4\alpha_k)+(3\beta_k^2-\beta_k) +\sum_{i=6}^{10} \gamma_{k,i}^2 +\sum_{i=11}^N (d_{k,i}^2+d_{k,i}).
\]
Since $d_{k,i}^2+d_{k,i}\geq 0$, it follows that \begin{equation}\label{eq:4-7-1(2)}
(2\alpha_k^2-4\alpha_k)+(3\beta_k^2-\beta_k) +\sum_{i=6}^{10} \gamma_{k,i}^2 \leq 2.    
\end{equation}
By \eqref{eq:4-7-1(1)} and \eqref{eq:4-7-1(2)}, we must have $C_k=(1,-1,\vec{0})$ for all $k=1,\dots,s-1$. Now, since $\iota(v_k)=\mathbf{P}_k-\mathbf{P}_{k-1}$ for each $k=1,\dots,s$ (with the convention that $\mathbf{P}_0=0$), it follows that \[
\iota(u_1),\iota(u_2),\iota(u_3),\iota(v_1),\dots,\iota(v_s)\in \langle e_1,\dots,e_5,e_{11},\dots,e_N\rangle \cong -\mathbb{Z}^{N-5}=-\mathbb{Z}^s,
\]
i.e., the image $\iota(Q_{X(2,1)}\oplus Q_{X(3,2)}\oplus Q_{X(n,n-q_4)})$ is a sublattice of $ \langle e_1,\dots,e_5,e_{11},\dots,e_N\rangle$, but this is impossible since $\operatorname{rank}(Q_{X(2,1)}\oplus Q_{X(3,2)}\oplus Q_{X(n,n-q_4)})=s+3$. This completes the proof for \textbf{\hyperlink{Case_4-7-1}{Case 7-2}} in the case $\mathbb{I}_{\operatorname{total}}=0$.

In the case $\mathbb{I}_{\operatorname{total}}=-1$, we may assume $T_{11}=\cdots=T_{N-1}=-1$ and $T_N=0$. For each $k=1,\dots,s$, write $\mathbf{P}_k$ as \[
\mathbf{P}_k=\alpha_k(e_1+e_2)+\beta_k(e_3+e_4+e_5)+\sum_{i=6}^{10} \gamma_{k,i} e_i + \sum_{i=11}^{N-1}d_{k,i}e_i+\delta_ke_N, \quad \sum_{i=6}^{10} \gamma_{k,i}=0,
\]
and let $C_k=(\alpha_k,\beta_k,\vec{\gamma_k},\delta_k)$, where $\vec{\gamma_k}=\sum_{i=6}^{10} \gamma_{k,i} e_i$. For $k=s$, we have \[
\mathbf{P}_s=\mathbf{S}_4=2(e_1+e_2)-(e_{11}+\cdots+e_{N-1}),
\]
so $C_s=(2,0,\vec{0},0)$. For $k<s$, the conditions $\mathbf{P}_k\cdot (\mathbf{S}_4+\mathbf{K})=-1$ and $\mathbf{P}_k^2+\mathbf{P}_k\cdot \mathbf{K}=-2$ give $\beta_k-\delta_k=-1$ and \[
(2\alpha_k^2-4\alpha_k)+(3\beta_k^2-\beta_k)+\sum_{i=6}^{10}\gamma_{k,i}^2 +(\delta_k^2+\delta_k)\leq 2.
\]
Thus, taking into account the condition $\sum_{i=6}^{10}\gamma_{k,i}=0$, so either \[
C_k\in \{(0,0,\vec{0},1), (1,0,\vec{0},1),(2,0,\vec{0},1),(1,-1,\vec{0},0)\}
\]
or $C_k=(1,0,\vec{\gamma_k},1)$ with $-\vec{\gamma_k}^2=\sum_{i=6}^{10}\gamma_{k,i}^2=2$. We now claim that if the set $C:=\{C_1,\dots,C_{s-1}\}$ contains $(1,-1,\vec{0},0)$, then it cannot contain an element of the form $(1,0,\vec{\gamma_k},1)$ with $-\vec{\gamma_k}^2=2$. If this were not the case, there would exist $k,\ell<s$ such that $C_k=(1,-1,\vec{0},0)$ and $C_\ell=(1,0,\vec{\gamma_\ell},1)$ with $-\vec{\gamma_\ell}^2=2$. Assume $k<\ell$; the other case is similar. Then $\mathbf{P}_\ell-\mathbf{P}_k$ is given by \[
\mathbf{P}_\ell-\mathbf{P}_k=(e_3+e_4+e_5)+\vec{\gamma_\ell} +\sum_{i=11}^{N-1} d_ie_i+e_N, \quad d_i=d_{\ell,i}-d_{k,i}.
\]
By Lemma \ref{lem:6.2}(e), it follows that \[
2=-\left((\mathbf{P}_\ell-\mathbf{P}_k)^2+(\mathbf{P}_\ell-\mathbf{P}_k)\cdot \mathbf{K} \right)=6+\sum_{i=11}^{N-1}(d_i^2+d_i) \geq 6,
\]
which is a contradiction. This proves our claim.

Now, if $C$ contains $(1,-1,\vec{0},0)$, then $C_k$ is not of the form $(1,0,\vec{\gamma_k},1)$ with $-\vec{\gamma_k}^2=2$ for all $k<s$, so $\vec{\gamma_k}=\vec{0}$ for all $k=1,\dots,s$, and we obtain a contradiction as in the case $\mathbb{I}_{\operatorname{total}}=0$. If $C$ does not contain $(1,-1,\vec{0},0)$, then $\beta_k=0$ for all $k=1,\dots,s$, and we may obtain a contradiction as in \textbf{\hyperlink{Case_4-5}{Case 5}}. This completes the proof for \textbf{\hyperlink{Case_4-7-1}{Case 7-2}}.

\vspace{2mm}
The remaining cases \eqref{case:4-7-2}, \eqref{case:4-7-3}, and \eqref{case:4-7-4} are similar to \textbf{\hyperlink{Case_4-7-1}{Case 7-2}}, so we summarize the arguments briefly.

\vspace{1mm}
\textbf{\hypertarget{Case_4-7-2}{Case 7-3} \eqref{case:4-7-2}:} $(K_1,\dots,K_{10})=(+1,+3,-1,-1,+1,-1,-1,-1,-1,-1)$. 
In this case, we have \[
(T_1,\dots,T_{10})=(-1,-3,+1,0,-1,+1,+1,+1,+1,+1).
\]
If $\mathbb{I}_{\operatorname{total}}=0$, then for $k=1,\dots,s$, write \begin{equation}\label{eq:P_k_for_case_7}
    \mathbf{P}_k=\alpha_k(e_1+e_2)+\beta_k(e_3+e_4+e_5)+\sum_{i=6}^{10} \gamma_{k,i} e_i + \sum_{i=11}^N d_{k,i}e_i, \quad \sum_{i=6}^{10} \gamma_{k,i}=0,
\end{equation} 
and let $C_k=(\alpha_k,\beta_k,\vec{\gamma_k})$, where $\vec{\gamma_k}=\sum_{i=6}^{10} \gamma_{k,i} e_i$. For $k=s$, we have $C_s=(-2,0,\vec{0})$, and for $k<s$, the conditions $\mathbf{P}_k\cdot (\mathbf{S}_4+\mathbf{K})=-1$ and $\mathbf{P}_k^2+\mathbf{P}_k\cdot \mathbf{K}=-2$ imply $C_k=(-1,-1,\vec{0})$. Thus, $\vec{\gamma_k}=\vec{0}$ for all $k=1,\dots,s$, and we may proceed exactly as in \textbf{\hyperlink{Case_4-7-1}{Case 7-2}} to obtain a contradiction.

If $\mathbb{I}_{\operatorname{total}}=-1$, we may assume that $T_{11}=\cdots=T_{N-1}=-1$ and $T_N=0$. For $k=1,\dots,s$, write \begin{equation}\label{eq:P_k_for_case_7(2)}
    \mathbf{P}_k=\alpha_k(e_1+e_2)+\beta_k(e_3+e_4+e_5)+\sum_{i=6}^{10} \gamma_{k,i} e_i + \sum_{i=11}^{N-1}d_{k,i}e_i+\delta_ke_N, \quad \sum_{i=6}^{10} \gamma_{k,i}=0,
\end{equation}
and let $C_k=(\alpha_k,\beta_k,\vec{\gamma_k},\delta_k)$, where $\vec{\gamma_k}=\sum_{i=6}^{10} \gamma_{k,i} e_i$. For $k=s$, we have $C_s=(-2,0,\vec{0},0)$, and for $k<s$, we have either \[
C_k\in \{(0,0,\vec{0},1), (-1,0,\vec{0},1),(-2,0,\vec{0},1),(-1,-1,\vec{0},0)\}
\]
or $C_k=(-1,0,\vec{\gamma_k},1)$ with $-\vec{\gamma_k}^2=\sum_{i=6}^{10}\gamma_{k,i}^2=2$. Now we proceed exactly as in \textbf{\hyperlink{Case_4-7-1}{Case 7-2}} to show that if the set $\{C_1,\dots,C_{s-1}\}$ contains $(-1,-1,\vec{0},0)$, then it cannot contain an element of the form $(-1,0,\vec{\gamma_k},1)$ with $-\vec{\gamma_k}^2=2$, thereby obtaining a contradiction.

\vspace{1mm}
\textbf{\hypertarget{Case_4-7-3}{Case 7-4} \eqref{case:4-7-3}:} $(K_1,\dots,K_{10})=(-1,+1,-1,-1,+1,-3,-1,-1,-1,+1)$. In this case, we have \[
(T_1,\dots,T_{10})=(+1,-1,+1,0,-1,+3,+1,+1,+1,-1).
\]
If $\mathbb{I}_{\operatorname{total}}=0$, write $\mathbf{P}_k$ as \eqref{eq:P_k_for_case_7} for $k=1,\dots,s$, and let $C_k=(\alpha_k,\beta_k,\vec{\gamma_k})$, where $\vec{\gamma_k}=\sum_{i=6}^{10} \gamma_{k,i} e_i$. For $k=s$, we have $C_s=(0,0,\vec{\gamma_s})$ with $\vec{\gamma_s}=2(e_6-e_{10})$. For $k<s$, the conditions $\mathbf{P}_k\cdot (\mathbf{S}_4+\mathbf{K})=-1$ and $\mathbf{P}_k^2+\mathbf{P}_k\cdot \mathbf{K}=-2$ imply $\beta_k=-1$ and 
\[
    2\alpha_k^2 + (3\beta_k^2-\beta_k) +\sum_{i=6}^{10}\gamma_{k,i}^2 -2\gamma_{k,6}+2\gamma_{k,10}\leq 2,
\]
which gives $C_k=(0,-1, \vec{\gamma_k})$ with $\vec{\gamma_k}=e_6-e_{10}$. Thus, $\alpha_k=0$ for all $k=1,\dots,s$. This implies that the orthogonal complement of the image of the embedding \eqref{eq:main_embedding} is generated by the vector $e_1+e_2$, which has square $-2\neq -30 n$, contradicting condition \ref{item:orthogonal_complement}. 

If $\mathbb{I}_{\operatorname{total}}=-1$, we may assume that $T_{11}=\cdots=T_{N-1}=-1$ and $T_N=0$. For $k=1,\dots,s$, write $\mathbf{P}_k$ as \eqref{eq:P_k_for_case_7(2)}, and let $C_k=(\alpha_k,\beta_k,\vec{\gamma_k},\delta_k)$, where $\vec{\gamma_k}=\sum_{i=6}^{10} \gamma_{k,i} e_i$. For $k=s$, we have $C_s=(0,0,\vec{\gamma_s},0)$ with $\vec{\gamma_s}=2(e_6-e_{10})$, and for $k<s$,  either \[
C_k\in \{(1,0, \vec{\gamma_k},1), (-1,0, \vec{\gamma_k},1), (0,-1,  \vec{\gamma_k},0)\}, \quad  \vec{\gamma_k}=e_6-e_{10},
\]
or $C_k$ is of the form \[
C_k=(0,0, \vec{\gamma_k},1);
\]
the precise possibilities for \(\vec{\gamma_k}\) will not be needed. Now we proceed as in \textbf{\hyperlink{Case_4-7-1}{Case 7-2}} to show that if the set $C=\{C_1,\dots,C_{s-1}\}$ contains $(0,-1,e_6-e_{10},0)$, then it cannot contain $(1,0,e_6-e_{10},1)$ or $(-1,0,e_6-e_{10},1)$. It follows that either $\alpha_k=0$ for all $k=1,\dots,s$, or $\beta_k=0$ for all $k=1,\dots,s$. In the former case, we can obtain a contradiction as in the preceding paragraph, and in the latter case, we can obtain a contradiction as in \textbf{\hyperlink{Case_4-5}{Case 5}}.

\vspace{1mm}
\textbf{\hypertarget{Case_4-7-4}{Case 7-5} \eqref{case:4-7-4}:} $(K_1,\dots,K_{10})=(-1,+1,-1,-1,+1,-1,-1,-1,-1,-1)$. 
In this case, we have \[
(T_1,\dots,T_{10})=(+1,-1,+1,0,-1,+1,+1,+1,+1,+1).
\]
If $\mathbb{I}_{\operatorname{total}}=0$, then $T_{11}=-3$ and $T_{12}=\cdots=T_N=-1$. For each $k=1,\dots,s$, write \[
\mathbf{P}_k=\alpha_k(e_1+e_2)+\beta_k(e_3+e_4+e_5)+\sum_{i=6}^{10} \gamma_{k,i} e_i +\delta_ke_{11}+ \sum_{i=12}^N d_{k,i}e_i, \quad \sum_{i=6}^{10} \gamma_{k,i}=0,
\]
and let $C_k=(\alpha_k,\beta_k,\vec{\gamma_k},\delta_k)$, where $\vec{\gamma_k}=\sum_{i=6}^{10} \gamma_{k,i} e_i$. For $k=s$, we have $C_s=(0,0,\vec{0},-3)$, and for $k<s$, we have \[
C_k\in \{(0,-1,\vec{0},-1), (0,-1,\vec{0},-2)\}.
\]
Thus, $\alpha_k=0$ for all $k=1,\dots,s$, and we can obtain a contradiction as in \textbf{\hyperlink{Case_4-7-3}{Case 7-4}}.

If $\mathbb{I}_{\operatorname{total}}=-1$, then we may assume $T_{11}=-3$, $T_{12}=\cdots=T_{N-1}=-1$, and $T_N=0$. For each $k=1,\dots,s$, write \[
\mathbf{P}_k=\alpha_k(e_1+e_2)+\beta_k(e_3+e_4+e_5)+\sum_{i=6}^{10} \gamma_{k,i} e_i +\delta_ke_{11}+ \sum_{i=12}^{N-1} d_{k,i}e_i+\eta_ke_N, \quad \sum_{i=6}^{10} \gamma_{k,i}=0,
\]
and let $C_k=(\alpha_k,\beta_k,\vec{\gamma_k},\delta_k,\eta_k)$, where $\vec{\gamma_k}=\sum_{i=6}^{10} \gamma_{k,i} e_i$. For $k=s$, we have $C_s=(0,0,\vec{0},-3,0)$, and for $k<s$, we have either \[
C_k\in \left\{\begin{aligned}
    & (0,0,\vec{0},-3,1),\, (0,0,\vec{0},-2,1), \, (0,0,\vec{0},-1,1), \, (0,0,\vec{0},0,1), \,(0,-1,\vec{0},-2,0), \\
    &(0,-1, \vec{0},-1,0), \, (1,0,\vec{0},-2,1), \, (1,0,\vec{0},-1,1),\, (-1,0,\vec{0},-2,1), (-1,0,\vec{0},-1,1)
\end{aligned} \right\}
\]
or \[
C_k\in \{(0,0,\vec{\gamma_k},-2,1), \, (0,0,\vec{\gamma_k},-1,1)\}, \quad -\vec{\gamma_k}^2=2.
\]
Using the condition $(\mathbf{P}_\ell-\mathbf{P}_k)^2+(\mathbf{P}_\ell-\mathbf{P}_k)\cdot \mathbf{K} =-2$, we can proceed as in \textbf{\hyperlink{Case_4-7-1}{Case 7-2}} to show that if the set $\{C_1,\dots,C_{s-1}\}$ contains $(0,-1,\vec{0},-2,0)$ or $(0,-1,\vec{0},-1,0)$, then it cannot contain $(\pm1 ,0,\vec{0},-2,1)$ or $(\pm1,0,\vec{0},-1,1)$. It follows that either $\alpha_k=0$ for all $k=1,\dots,s$, or $\beta_k=0$ for all $k=1,\dots,s$, so we can obtain a contradiction as in \textbf{\hyperlink{Case_4-7-3}{Case 7-4}}. This completes the proof for \textbf{\hyperlink{Case_4-7}{Case 7}}, and hence the proof of Theorem \ref{thm:aMY}.

\clearpage

\end{document}